\newcommand{\punto}{\,\cdot\,}
\newcommand{\smallfrac}[2]{{\textstyle\frac{#1}{#2}}} 
\newcommand{\jump}[1]{[\![#1]\!]}
\newcommand{\ave}[1]{\{\!\!\{#1\}\!\!\}}
\newcommand{\triple}[1]{|\!|\!|#1|\!|\!|}
\newcommand{\bs}{\boldsymbol}
\newcommand{\bff}{\mathbf}
\newtheorem{proposition}{Proposition}[section]
\newtheorem{corollary}[proposition]{Corollary}
\newtheorem{theorem}[proposition]{Theorem}
\numberwithin{equation}{section}
\title{Boundary and coupled boundary-finite element methods for transient wave-structure interaction}
\date{\today}
\author{George C. Hsiao, Tonatiuh S\'anchez-Vizuet
 \& Francisco--Javier Sayas\footnote{TSV and FJS partially funded by NSF grant DMS 1216356.}  \\
Department of Mathematical Sciences, University of Delaware, USA\\
{\tt \{ghsiao, tonatiuh, fjsayas\}@udel.edu }}
\begin{document}

\maketitle

\begin{abstract}
We propose time-domain boundary integral and coupled boundary integral and variational formulations for acoustic scattering by linearly elastic obstacles. Well posedness along with stability and error bounds with explicit time dependence are established. Full discretization is achieved coupling boundary and finite elements; Convolution Quadrature is used for time evolution in the pure BIE formulation and combined with time stepping in the coupled BEM/FEM scenario. Second order convergence in time is proven for BDF2-CQ and numerical experiments are provided for both BDF2 and Trapezoidal Rule CQ showing second order behavior for the latter as well. \\
{\bf AMS Subject classification.} 65R20, 65M38, 74J20, 74F10.\\
{\bf Keywords.} Time-Domain Boundary Integral Equations, Convolution Quadrature, Scattering, Linear Elasticity, Coupling FEM-BEM
\end{abstract}

%
\section{Introduction}
%
The study of the acoustic scattering by a linearly elastic obstacle and its corresponding elastic response has been a subject of interest in both the mathematical and engineering community for some years now. In the case of time-harmonic regime, the study of the existence and uniqueness of solutions dates back at least to 1986 \cite{HaJe:1986}. The well-posedness of several purely boundary integral formulations was analized in \cite{HsKlSc:1988,Hsiao:1994,LuMa:1995} by assuming that the scatterer had at least a boundary of class $\mathcal C^2$. Later on, in the 2000's, combined boundary integral and variational formulations were proposed in \cite{HsKlRo:2000,ElHsRa:2008} and proven to be well posed also for smooth scatterers. In these works the elastic response is modeled variationally and the unbounded acoustic scattering is treated with either a boundary integral equation or by introducing an artificial boundary where an absorbing condition is imposed. Recently, the more general case of a Lipschitz scatterer was dealt with in \cite{BaDjEs:2014a} within the framework of a variational formulation with a fictitious boundary.

On the computational side, the coupling of Boundary Elements and $hp$-Finite Elements was studied in \cite{DeOd:1996} where the Burton-Miller equation is used to model the acoustic wavefield; the authors provide a posteriori error bounds aiming for an adaptive implementation. The fictitious boundary approach with finite elements has been thoroughly investigated in \cite{MaMeSe:2004,GaMaMe:2012,GaMaMe:2014} and a DG-like implementation was carried out recently in \cite{BaDjEs:2014b}.

The transient regime, on the other hand, has not enjoyed so much attention --at least in the mathematical community-- as its frequency-domain cousin. In \cite{Feng:2000,FeXi:2004} the problem is posed in a slab-like unbounded domain imposing first order absorbing boundary conditions, while in \cite{HsSaWe:2015} well-posedness is established for the coupled boundary integral/variational formulation also in a slab-like region.  Within the engineering community, the time-domain case has attracted attention at least since 1991. Numerous approaches have been attempted without much theoretical justification but with satisfactory results. To cite some examples, BE/FE coupling with Convolution Quadrature was employed in \cite{AnEs:1991}, BE/BE coupling using Newmark time integration was the preferred treatment in \cite{MaSo:2006} and FE/FE coupling with an absorbing boundary condition and Newmark time integration were used in \cite{FlKaWo:2006}. A comprehensive list of related work can be found in \cite{Soares:2011}.

The present work strives to fill the gap in the mathematical analysis of the time-domain wave-structure problem. It deals both with the pure boundary integral formulation --which arises naturally when dealing with homogeneous acoustic and elastic domains-- and the combined boundary integral/variational treatment where integral equations are used only for the acoustic dynamics, being better suited for general elastic scatterers. The former case leads to a numerical treatment where only Boundary Elements are used for space discretizations while the latter is naturally suited for a coupled Boundary Element/Finite Element implementation. 

Despite the fact that each formulation requires a very different numerical discretization, the techniques and tools required to carry out the theoretical study are surprisingly similar. Following \cite{LaSa:2009a,Sayas:2013,BaLaSa:2014} the analysis is done in the Laplace-domain aiming for a Convolution Quadrature treatment similar to that done for the purely acoustic case in \cite{BaLuSa:2015,FaMoSc:2012,FaMoSc:2014}. We deal simultaneously with the continuous and discrete cases by posing the problems in a general closed subspace of the appropriate function spaces. Well-posedness is proved in the Laplace-domain via an equivalent exotic transmission problem for which a variational formulation is found. The resulting stability bounds are written carefully in terms of the Laplace parameter $s$ in order to apply results from \cite{Sayas:2014} which give explicit time-domain estimates. Error bounds in the time-domain are obtained following a similar approach for the semidiscrete problem. Full discretization and convergence estimates are given for the case of BDF2-CQ.

Finally, numerical experiments are carried out using Boundary Elements coupled with BDF2 and Trapezoidal Rule Convolution Quadrature for the BIE formulation and BE/FE with Trapezoidal Rule Convolution Quadrature for the boundary element part paired with Trapezoidal Rule time stepping for the elastic field. The results support the order of convergence predicted in the theoretical part of the paper. 

\paragraph{A word on real and complex Sobolev spaces.} Basic knowledge of Sobolev spaces $H^1(\Omega)$, its trace space $H^{1/2}(\partial\Omega)$ and its dual $H^{-1/2}(\partial\Omega)$ is assumed throughout. 
In everything that follows, the Sobolev spaces $H^1(\Omega)$ and $H^{\pm1/2}(\partial\Omega)$ will be used with the same notation for  real-valued and complex-valued functions. If, for instance, we take a subspace $V \subset H^{1/2}(\partial\Omega)$, we will understand that it is a subspace of the real-valued space $H^{1/2}(\partial\Omega)$, and that its complexification will be used whenever complex values are considered. Scripted parentheses will be used for real $L^2$ inner products of scalar, vector- or matrix-valued functions:
\[
(a,b)_\Omega:=\int_\Omega a\,b,
	\qquad
(\mathbf a,\mathbf b)_\Omega:=\int_\Omega\mathbf a\cdot\mathbf b,
	\qquad
(\mathbf A,\mathbf B)_\Omega:=\int_\Omega \mathbf A:\mathbf B,	
\]
where in the latter the colon denotes the Frobenius inner product of matrices. When the functions take complex values, we will still use the parenthesis with this precise meaning and will explicitly conjugate quantities whenever needed. When we change the font describing a Sobolev or Lebesgue space from italic to boldface we mean the product space of $d$ copies of the space. For example, $\mathbf L^2(\Omega):=L^2(\Omega)^d$.
%
\section{Homogeneous isotropic solids: BIE formulation}\label{sec:2}
%
\paragraph{The PDE system.}
Let $\Omega_-\subset \mathbb R^d$ be a bounded, not necessarily connected region, lying on one side of its Lipschitz boundary $\Gamma$, and let $\Omega_+:=\mathbb R^d\setminus\overline{\Omega_-}$ be its unbounded complement. The unit normal vector field on $\Gamma$, exterior to $\Omega_-$, will be denoted $\bs\nu$. Our problem can be explained as follows: an incident acoustic field $v^{\mathrm{inc}}$ arrives at an obstacle at time $t=0$ and interacts with a homogeneous isotropic elastic body occupying $\Omega_-$. The elastic properties of the material are represented by the two Lam\'e parameters that define the structure of the linear stress tensor:
\[
\bs\sigma(\bff u):=2\mu \bs\varepsilon(\bff u)+\lambda(\nabla\cdot\bff u)\bff I, 
\qquad 
\bs\varepsilon(\bff u):=\smallfrac12 (\nabla\bff u+(\nabla \bff u)^\top),
\]
where $\bff I$ is the $d\times d$ identity matrix and $\bff u:\Omega_-\to\mathbb R^d$ is the displacement field. This will be the only occurrence of the Lam\'e parameters (until we get to the section on numerical experiments), and we will use the Greek letters $\lambda,\mu$ to represent other quantities. Related to the stress tensor we can define the Lam\'e operator $\Delta^*\bff u:=\nabla\cdot\bs\sigma(\bff u)$ and the normal traction on the boundary by
$\bff t(\bff u):=\bs\sigma(\bff u)\bs\nu.$ Before we give a rigorous mathematical formulation of the problem, let us start by writing the system of PDE with transmission conditions that we want to solve:
\begin{subequations}\label{eq:2.1}
\begin{alignat}{6}
\rho_\Sigma \bff u_{tt} =& \,\Delta^* \bff u  & \qquad & \mbox{in $\Omega_-\times [0,\infty)$},\\
c^{-2} v_{tt}=&\,\Delta v & \qquad & \mbox{in $\Omega_+\times [0,\infty)$},\\
-\bff u_t\cdot\bs\nu =& \,\partial_\nu (v+v^{\mathrm{inc}}) 
 	& \qquad & \mbox{on $\Gamma\times [0,\infty)$},\\
\bff t(\bff u)=&\,-\rho_f (v+v^{\mathrm{inc}})_t\bs\nu
 	& \qquad & \mbox{on $\Gamma\times [0,\infty)$}.
\end{alignat}
\end{subequations}
Here $\rho_f$ and $\rho_\Sigma$ are the respective constant densities of the fluid and elastic media, the $t$ subscript denotes partial differentiation with respect to time, and $\partial_\nu$ is the normal derivative operator on $\Gamma$. This system is complemented with homogeneous initial conditions for $\bff u$ and $v$ (and their time derivatives), and a causality condition that can be expressed as: for all $t>0$, $v\equiv 0$ except in a bounded region (that changes with $t$). A derivation of this model can be found in \cite{Ihlenburg:1998}.\\

\begin{figure}[htb]
\centering
\includegraphics[scale=.55]{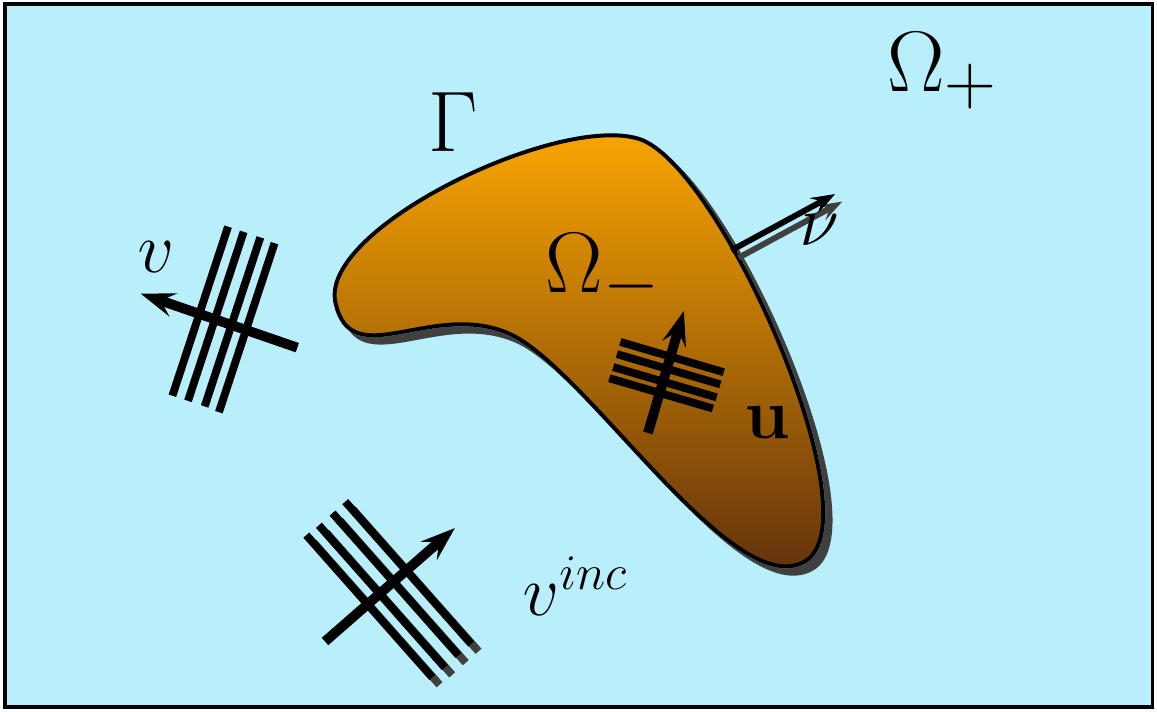}
\caption{A cartoon of the geometric setting: the solid (brown) is surrounded by the unbounded medium  (blue)}
\end{figure}

\paragraph{Traces on the boundary.} In order to properly define our concept of solution to \eqref{eq:2.1}, we will need some additional notation. Given $\bff u\in \bff H^1(\mathbb R^d\setminus\Gamma)$, we consider its interior, exterior, averaged, and difference traces:
\[
\gamma^-\bff u, \quad \gamma^+\bff u, \quad \ave{\gamma\bff u}:=\smallfrac12(\gamma^-\bff u+\gamma^+\bff u),
\quad
\jump{\gamma\bff u}:=\gamma^-\bff u-\gamma^+\bff u.
\] 
For $\bff u\in \bff H^1(\mathbb R^d\setminus\Gamma)$ such that $\bs\sigma(\bff u)\in L^2(\mathbb R^d\setminus\Gamma)^{d\times d}$ we define the weak interior and exterior traction fields using Betti's formula
\[
\langle\bff t^\mp(\bff u),\gamma \bff v\rangle_\Gamma:=
	\pm (\bs\sigma(\bff u),\bs\varepsilon(\bff v))_{\Omega_\mp}\pm (\Delta^*\bff u,\bff v)_{\Omega_\mp}
\qquad\forall\bff v\in \bff H^1(\mathbb R^d).
\]
These are elements of the dual space $\bff H^{-1/2}(\Gamma)$. From now on the $\Gamma$-subscripted angled bracket will be used to denote the duality product of $H^{-1/2}(\Gamma)$ with $H^{1/2}(\Gamma)$ or $\bff H^{-1/2}(\Gamma)$ and $\bff H^{1/2}(\Gamma)$. We will also define the average and jump of the normal traction as $\ave{\bff t(\bff u)}:=\frac12(\bff t^+(\bff u)+\bff t^-(\bff u))$ and $\jump{\bff t(\bff u)}:=\bff t^-(\bff u)-\bff t^+(\bff u)$. Two sided traces (and normal derivatives) for scalar functions (with Laplacian in $L^2$) are similarly defined. The following two operators related to the normal vector field
\[
\begin{array}{rcl}
\mathrm N : \bff H^{1/2}(\Gamma) & \longrightarrow & H^{-1/2}(\Gamma)\\
\bs\phi &\longmapsto & \bs\phi\cdot\bs\nu,
\end{array}
\qquad
\begin{array}{rcl}
\mathrm N^t :  H^{-1/2}(\Gamma) & \longrightarrow & \bff H^{1/2}(\Gamma)\\
\phi &\longmapsto & \phi\, \bs\nu,
\end{array}
\]
will be used to give rigorous meaning to some elements appearing in the transmission conditions.
\paragraph{Weak form.} For the sake of completeness we will now give the weak form of the equations \eqref{eq:2.1}. Note that all the estimates that we will produce will be developed using the Laplace transformed equations, and it will be only those equations that we will need to deal with rigorously. We look for a pair $(\bff u,v)$ of causal distributions, with values in the space
\[
\{\bff u\in \bff H^1(\Omega_-)\,:\, \Delta^*\bff u\in \mathbf L^2(\Omega_-)\}
\times
\{ v\in H^1(\Omega_+)\,:\, \Delta v\in L^2(\Omega_+)\}
\]
such that
\begin{subequations}\label{eq:2.2}
\begin{alignat}{6}
\label{eq:2.2a}
\rho_\Sigma \ddot{\bff u} =&\,\Delta^* \bff u  
		& \qquad & \mbox{(in $\bff L^2(\Omega_-)$)},\\
\label{eq:2.2b}
c^{-2} \ddot v=&\,\Delta v 
		& \qquad & \mbox{(in $L^2(\Omega_+)$)},\\
\label{eq:2.2c}
-\gamma^-\dot{\bff u}\cdot\boldsymbol\nu =&\, \partial_\nu^+ v+\alpha_0 
		 & \qquad & \mbox{(in $H^{-1/2}(\Gamma)$)},\\
\label{eq:2.2d}
\bff t^-(\bff u)=&\,-\rho_f (\gamma^+\dot v +\dot\beta_0)\boldsymbol\nu
		 & \qquad & \mbox{(in $H^{1/2}(\Gamma)$)}.
\end{alignat}
\end{subequations}
In \eqref{eq:2.2}, the upper dots are used for distributional differentiation and the parentheses in the right-hand sides tell where the distributions are compared. Full details on how to understand wave equations in the sense of vector-valued distributions can be found in \cite{Sayas:2014}. Also, we have used $\alpha_0$ and $\beta_0$  to denote general causal distributions with values in $H^{-1/2}(\Gamma)$ and $H^{1/2}(\Gamma)$ respectively. Existence and uniqueness of solution to \eqref{eq:2.2} can be proved with some additional constraints: we have to assume that the data and the solution are Laplace transformable with Laplace transforms defined in a subset of the form $\mathrm{Re}\,s>\sigma_0$ for some $\sigma_0$.
\paragraph{Laplace-transformed system.} Let us now consider a slightly different problem. Now $\lambda_0\in H^{-1/2}(\Gamma)$ and $\phi_0\in H^{1/2}(\Gamma)$ are data, and we look for $(\bff u,v)\in \bff H^1(\Omega_-)\times H^1(\Omega_+)$ such that
\begin{subequations}\label{eq:2.3}
\begin{alignat}{6}
\label{eq:2.3a}
\rho_\Sigma s^2 \bff u =&\,\Delta^* \bff u  
		& \qquad & \mbox{in $\Omega_-$},\\
\label{eq:2.3b}
(s/c)^2 v=&\,\Delta v 
		& \qquad & \mbox{in $\Omega_+$},\\
\label{eq:2.3c}
-s \gamma^-\bff u\cdot\boldsymbol\nu =&\, \partial_\nu^+ v+\lambda_0 
		 & \qquad & \mbox{in $\Gamma$},\\
\label{eq:2.3d}
\bff t^-(\bff u)=&\,-\rho_f s (\gamma^+ v +\phi_0)\boldsymbol\nu
		 & \qquad & \mbox{in $\Gamma$}.
\end{alignat}
\end{subequations}
This problem will be studied for all $s\in \mathbb C_+:=\{ s\in \mathbb C\,:\, \mathrm{Re}\,s>0\}$. The relation between \eqref{eq:2.3} and \eqref{eq:2.2} is simple: if $\lambda_0=\mathcal L\{\alpha_0\}(s)$ and $\phi_0=\mathcal L\{\beta_0\}(s)$, then the solution of \eqref{eq:2.3} is the Laplace transform of the solution of \eqref{eq:2.2}.
\paragraph{Calder\'on calculus for the acoustic problem.} The single and double layer operators associated to the Laplace resolvent equation (the Laplace transform of the wave equation) can be defined as the solution of a transmission problem. For given $(\varphi,\eta)\in H^{1/2}(\Gamma)\times H^{-1/2}(\Gamma)$ and any $s\in \mathbb C_+$, the problem of finding $v\in H^1(\mathbb R^d\setminus\Gamma)$ satisfying
\begin{alignat*}{6}
\Delta v -(s/c)^2 v =0 &\qquad & \mbox{in $\mathbb R^d\setminus\Gamma$},\\
\jump{\gamma v}=\varphi,\\
\jump{\partial_\nu v}=\eta,
\end{alignat*}
has a unique solution, which we write in terms of two linear operators, known as the single ($\mathrm S$) and double  ($\mathrm D$)  layer potentials
\[
v=\mathrm S(s/c)\eta-\mathrm D(s/c)\varphi.
\]
Associated to the potentials, we can define four integral operators
\begin{alignat*}{6}
&\mathrm V(s):=\ave{\gamma \punto}\mathrm S(s)=\gamma \mathrm S(s), & \qquad &
\mathrm K(s):=\ave{\gamma\punto} \mathrm D(s),\\
&\mathrm K^t(s):=\ave{\partial_\nu\punto}\mathrm S(s),& \qquad &
\mathrm W(s):=-\ave{\partial_\nu\punto} \mathrm D(s)=-\partial_\nu\mathrm D(s).
\end{alignat*}

\paragraph{Calder\'on calculus for the elastic problem.} In parallel to the definitions for the acoustic case, we can define the layer potentials and operators for the resolvent Navier-Lam\'e equations (Laplace transforms of the linear elasticity equations) by solving a transmission problem. For given $(\bs\varphi,\bs\eta)\in \bff H^{1/2}(\Gamma)\times \bff H^{-1/2}(\Gamma)$ and any $s\in \mathbb C_+$, we find $\bff u\in \bff H^1(\mathbb R^d\setminus\Gamma)$ satisfying
\begin{alignat*}{6}
\Delta^* \bff u -s^2 \rho_\Sigma \bff u =0 &\qquad & \mbox{in $\mathbb R^d\setminus\Gamma$},\\
\jump{\gamma \bff u}=\bs\varphi,\\
\jump{\bff t(\bff u)}=\bs\eta,
\end{alignat*}
and write the solution in terms of the data
\[
\bff u=\mathbf S(s)\bs\eta-\mathbf D(s)\bs\varphi
\]
by introducing the elastic layer potentials.
Finally, we can define four integral operators
\begin{alignat*}{6}
&\mathbf V(s):=\ave{\gamma \punto}\mathbf S(s)=\gamma\mathbf S(s), & \qquad &
\mathbf K(s):=\ave{\gamma\punto} \mathbf D(s),\\
&\mathbf K^t(s):=\ave{\bff t(\punto)}\mathbf S(s),& \qquad &
\mathbf W(s):=-\ave{\bff t(\punto)} \mathbf D(s)=-\bff t\mathbf D(s).
\end{alignat*}

\paragraph{The boundary integral system.} The boundary integral system equivalent to \eqref{eq:2.3} is derived by choosing $\bs\phi_\Sigma:=\gamma^-\bff u$ and $\phi_f:=\gamma^+ v$ as unknowns, using the representation formulas for $v$ and $\bff u$ and finally imposing the transmission conditions. The process is quite standard and we will only sketch the main steps. We introduce the matrices of operators
\[
\mathbb L(s):=
\left[\begin{array}{cc}
\bff W(s)+\rho_fs^2 \mathrm N^t\mathrm V(s/c)\mathrm N 
	& \rho_f s(\mathrm N^t\mathrm{K}(s/c)-\bff K^t(s)\mathrm N^t) \\
\rho_f s(\mathrm N\bff K(s)-\mathrm K^t(s/c)\mathrm N)
	& (\rho_f s)^2 \mathrm N\bff V(s)\mathrm N^t+\rho_f\mathrm W(s/c)
\end{array}\right]
\]
and
\[
\mathbb R(s):=
\left[\begin{array}{cc}
-\rho_f s \mathrm N^t \mathrm V(s/c) & \rho_f s(-\tfrac12\bff I+\bff K^t(s))\mathrm N^t \\
\rho_f(\tfrac12\mathrm I+\mathrm K^t(s/c)) & -(\rho_f s)^2 \mathrm N \bff V(s)\mathrm N^t
\end{array}\right].
\]
Denoting $\mathbb H^{\pm1/2}(\Gamma):=\bff H^{\pm1/2}(\Gamma)\times H^{\pm1/2}(\Gamma)$, it is easy to show that by well-known properties of the boundary integral operators on Lipschitz domains (see the general theory in \cite{McLean:2000})
$
\mathbb L(s):\mathbb H^{1/2}(\Gamma) \to 	
				\mathbb H^{-1/2}(\Gamma)
$
and
$
\mathbb R(s): H^{-1/2}(\Gamma)\times  H^{1/2}(\Gamma) \to 	
				 \mathbb H^{-1/2}(\Gamma)
$
are bounded. For the sake of notational simplicity, we will write $\mathbb L(s)(\bs\phi,\phi)$, understanding that the vector $(\bs\phi,\phi)$ is first transformed into a column vector and them left-multiplied by $\mathbb L(s)$.

\begin{theorem}
If $(\bff u,v)$ solves \eqref{eq:2.3}, then $(\bs\phi_\Sigma,\phi_f):=(\gamma^-\bff u,\gamma^+ v)$ satisfies
\begin{equation}\label{eq:2.4}
\mathbb L(s) (\bs\phi_\Sigma , \phi_f )
=\mathbb R(s) (\lambda_0 , \phi_0 ).
\end{equation}
Reciprocally, if $(\bs\phi_\Sigma,\phi_f)$ is a solution of \eqref{eq:2.4}, then
\begin{subequations}\label{eq:2.5}
\begin{align}
\label{eq:2.5a}
\bff u :=\,& -\rho_f s \bff S(s)\mathrm N^t(\phi_0+\phi_f)-\bff D(s)\bs\phi_\Sigma,\\
\label{eq:2.5b}
v :=\,& \mathrm S(s/c)(\lambda_0+s\mathrm N\bs\phi_\Sigma)+\mathrm D(s/c)\phi_f,
\end{align}
\end{subequations}
is a solution of \eqref{eq:2.3}.
\end{theorem}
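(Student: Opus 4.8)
\textit{Proof strategy.}
The plan is the usual ``direct boundary integral'' argument, run in both directions: extension by zero, uniqueness of the transmission problems that define the layer potentials, and the Calder\'on jump relations for the acoustic and elastic layer operators (all available from the Lipschitz-domain theory quoted above).

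For the first implication, suppose $(\bff u,v)$ solves \eqref{eq:2.3}. I would extend $\bff u$ by zero to $\Omega_+$ and $v$ by zero to $\Omega_-$, obtaining functions in $\bff H^1(\mathbb R^d\setminus\Gamma)$ and $H^1(\mathbb R^d\setminus\Gamma)$; the regularity $\bs\sigma(\bff u)\in L^2$, $\Delta^*\bff u\in\mathbf L^2$ needed for the interior traction to make sense is guaranteed by \eqref{eq:2.3a}, and likewise for $v$ by \eqref{eq:2.3b}. These extended fields solve the homogeneous resolvent equations on all of $\mathbb R^d\setminus\Gamma$, and, since one of the two one-sided traces vanishes, their jumps are read off immediately: $\jump{\gamma\bff u}=\bs\phi_\Sigma$ and $\jump{\bff t(\bff u)}=\bff t^-(\bff u)$, which by \eqref{eq:2.3d} equals $-\rho_f s\,\mathrm N^t(\phi_0+\phi_f)$; and $\jump{\gamma v}=-\phi_f$, $\jump{\partial_\nu v}=-\partial_\nu^+ v$, which by \eqref{eq:2.3c} equals $\lambda_0+s\,\mathrm N\bs\phi_\Sigma$. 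Feeding these data into the transmission problems that define the two families of layer potentials and invoking their unique solvability yields exactly the representation formulas \eqref{eq:2.5a}--\eqref{eq:2.5b}.

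With the representations at hand, the system \eqref{eq:2.4} is just the two transmission conditions transplanted onto $\Gamma$. I would compute the interior traction of \eqref{eq:2.5a} and the exterior Dirichlet trace of \eqref{eq:2.5b} via the jump relations --- using $\ave{\bff t(\punto)}\bff S=\bff K^t$, $\ave{\bff t(\punto)}\bff D=-\bff W$, $\gamma^\mp=\ave{\gamma\punto}\pm\smallfrac12\jump{\gamma\punto}$, $\bff t^\mp=\ave{\bff t(\punto)}\pm\smallfrac12\jump{\bff t(\punto)}$ and the analogous acoustic identities --- and substitute into \eqref{eq:2.3d}; collecting terms with the help of $\mathrm N,\mathrm N^t$ reproduces the first row of \eqref{eq:2.4}. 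The second row arises in the same way from the interior Dirichlet trace of \eqref{eq:2.5a} and the exterior normal derivative of \eqref{eq:2.5b} inserted into \eqref{eq:2.3c}, after multiplying that condition by $\rho_f$ to match the normalization built into $\mathbb L,\mathbb R$. The converse is the identical computation read backwards: given $(\bs\phi_\Sigma,\phi_f)$ solving \eqref{eq:2.4}, define $(\bff u,v)$ by \eqref{eq:2.5}; the mapping properties of the potentials give $(\bff u,v)\in\bff H^1(\Omega_-)\times H^1(\Omega_+)$ together with \eqref{eq:2.3a}--\eqref{eq:2.3b} for free, while expanding the one-sided traces of \eqref{eq:2.5} with the same jump relations shows that each row of \eqref{eq:2.4} is equivalent to one of \eqref{eq:2.3c}--\eqref{eq:2.3d}.

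I do not anticipate a genuine obstacle: once the potential-defining transmission problems are known to be uniquely solvable, the statement is a bookkeeping exercise. The only delicate point is keeping the conventions consistent --- the orientation of the jumps, the $\smallfrac12$ factors in the trace identities, and the interplay between the scalar argument $s/c$ and the vectorial argument $s$ --- so that the assembled identities reproduce the precise entries of $\mathbb L(s)$ and $\mathbb R(s)$.
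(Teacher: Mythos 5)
Your proposal is correct and follows essentially the same route as the paper: the extension-by-zero argument plus uniqueness of the potential-defining transmission problems is just the standard derivation of the representation formulas \eqref{eq:2.6}, and taking one-sided traces of \eqref{eq:2.5} and inserting the transmission conditions \eqref{eq:2.3c}--\eqref{eq:2.3d} is exactly the paper's combination of the Calder\'on identities \eqref{eq:2.7} with \eqref{eq:2.8}, run in both directions. The only caveat is the one you already flag yourself: the sign and $\smallfrac12$ bookkeeping (and the $\rho_f$ normalization of the second row) must be tracked carefully to land on the precise entries of $\mathbb L(s)$ and $\mathbb R(s)$.
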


\begin{proof}
If $(\bff u,v)$ satisfies \eqref{eq:2.3a}-\eqref{eq:2.3b}, then we have the representation formulas
\begin{equation}\label{eq:2.6}
\bff u=\bff S(s)\bff t^-(\bff u)-\bff D(s)\gamma^-\bff u
\quad\mbox{and}\quad
v=-\mathrm S(s/c)\partial_\nu^+ v+\mathrm D(s/c)\gamma^+ v,
\end{equation}
and the boundary integral identities
\begin{subequations}\label{eq:2.7}
\begin{alignat}{6}
\label{eq:2.7a}
&\smallfrac12\gamma^-\bff u = \bff V(s)\bff t^-(\bff u)-\bff K(s)\gamma^-\bff u,
	&\qquad & 
\smallfrac12\bff t^-(\bff u)=\bff K^t(s)\bff t^-(\bff u)+\bff W(s)\gamma^-\bff u,
\\
\label{eq:2.7b}
&\smallfrac12\gamma^+ v =-\mathrm V(s)\partial_\nu^+v+\mathrm K(s) \gamma^+ v,
	&\qquad &
\smallfrac12\partial_\nu^+ v=-\mathrm K^t(s)\partial_\nu^+ v-\mathrm W(s)\gamma^+ v.
\end{alignat}
\end{subequations}
If we define $(\bs\phi_\Sigma,\phi_f):=(\gamma^-\bff u,\gamma^+ v)$ the transmission conditions \eqref{eq:2.3c}- \eqref{eq:2.3d} become
\begin{equation}\label{eq:2.8}
\bff t^-(\bff u)=-\rho s \mathrm N^t(\phi_f+\phi_0)
\qquad \mbox{and}\qquad 
\partial_\nu^+ v=-(s\mathrm N \bs\phi_\Sigma+\lambda_0).
\end{equation} 
Substituting \eqref{eq:2.8} in \eqref{eq:2.7} and adding the two equations in \eqref{eq:2.7a} and the two in \eqref{eq:2.7b} gives the integral equations \eqref{eq:2.4}. 

Reciprocally, let $(\bff u,v)$ be defined by \eqref{eq:2.5} where $(\bs\phi_\Sigma,\phi_f)$ solve \eqref{eq:2.4}. Since $(\bff u,v)$ are defined with potentials, it follows that \eqref{eq:2.3a} and \eqref{eq:2.3b} are satisfied. Applying the limit formulas \eqref{eq:2.7} in \eqref{eq:2.5}, we see that
\begin{alignat*}{6}
 \bff t^-(\bff u)+\rho_f s\mathrm N^t(\gamma^+v+\phi_0)
 = & -\smallfrac12 \rho_f s\mathrm N^t(\phi_0+\phi_f) -\rho_f s \bff K^t(s)\mathrm N^t(\phi_0+\phi_f)+
			\bff W(s)\bs\phi_\Sigma\\
	& +\rho_f s \mathrm N^t \mathrm V(s/c)(\lambda_0+s\mathrm N \bs\phi_\Sigma)
		+\rho_f s \mathrm N^t (\smallfrac12\phi_f+\mathrm K(s/c)\phi_f) \\
=\, & (\bff W(s)+\rho_f s^2 \mathrm N^t \mathrm V(s/c)\mathrm N)\bs\phi_\Sigma +
	\rho_f s (\mathrm N^t\mathrm K(s/c)-\bff K^t(s)\mathrm N^t)\phi_f\\
	&+\rho_f s\mathrm N^t\mathrm V(s/c)\lambda_0+\rho_f s (\smallfrac12 \mathrm N^t\phi_0-\bff K^t(s)\mathrm N^t\phi_0)\\
=\, & 0,
\end{alignat*}
by the first equation in \eqref{eq:2.4}. This proves the first transmission condition \eqref{eq:2.3c}. Similarly \eqref{eq:2.3d} is proved using the second equation in \eqref{eq:2.4}.
\end{proof}

\section{Numerical discretization of the BIE system}\label{sec:3}

\subsection{Stability of Galerkin semidiscretizations in space}\label{sec:3.1}
%
\paragraph{Galerkin semidiscretization in space.} We next consider a Galerkin discretization of the integral equations \eqref{eq:2.4}. Note that when returning to the time-domain (by taking inverse Laplace transforms) this is simply a Galerkin semidiscretization in space of the system of delayed integral equations whose Laplace transform is \eqref{eq:2.4}. Following \cite{LaSa:2009a}, the study of solvability for \eqref{eq:2.4} is done at the same time as the study of Galerkin stability. We then choose two {\em closed} subspaces $\bff Y_h \subset \bff H^{1/2}(\Gamma)$ and $Y_h \subset H^{1/2}(\Gamma)$. For Galerkin semidiscretization, these spaces are taken to be finite dimensional. In the case of non-discretization (analysis of well-posedness of \eqref{eq:2.4}) we just take $\bff Y_h=\bff H^{1/2}(\Gamma)$ and $Y_h=H^{1/2}(\Gamma)$. 

The Galerkin discretization of \eqref{eq:2.4} seeks $(\bs\phi_\Sigma^h,\phi_f^h)\in \bff Y_h\times Y_h$ satisfying
\begin{equation}\label{eq:3.1}
\langle \mathbb L(s)(\bs\phi_\Sigma^h,\phi_f^h), (\bs\mu^h,\mu^h)\rangle_\Gamma=
\langle \mathbb R(s)(\lambda_0,\bs\phi_0), (\bs\mu^h,\mu^h)\rangle_\Gamma
\qquad\forall (\bs\mu^h,\mu^h)\in \bff Y_h\times Y_h.
\end{equation}
The angled bracket is the duality product of $\mathbb H^{-1/2}(\Gamma)$ with $\mathbb H^{1/2}(\Gamma)$. 
We can also write \eqref{eq:3.1} in the very condensed form
\begin{equation}\label{eq:3.2}
\mathbb L(s)(\bs\phi_\Sigma^h,\phi_f^h)-\mathbb R(s)(\lambda_0,\bs\phi_0) \in \bff Y_h^\circ\times Y_h^\circ\equiv (\bff Y_h\times Y_h)^\circ,
\end{equation}
where $X^\circ$ denotes the polar set of $X$, that is, the set of elements of the dual space that vanish on $X$. Following the same techniques of \cite{LaSa:2009a} we first rewrite the Galerkin equations \eqref{eq:3.1} as an exotic transmission problem. Note that in the new transmission problem, the elastic and the acoustic fields are defined on both sides of the boundary. 

\begin{proposition}[Transmission problem for Galerkin equations]\label{prop:3.1}
Let $(\bs\phi_\Sigma^h,\phi_f^h)\in \bff Y_h\times Y_h$ satisfy \eqref{eq:3.1} and let
\begin{subequations}\label{eq:3.3}
\begin{eqnarray}
\bff u^h &:=& -\rho_f s \bff S(s)\mathrm N^t(\phi_0+\phi_f^h)-\bff D(s)\bs\phi_\Sigma^h,\\
v^h &:=& \mathrm S(s/c)(\lambda_0+s\mathrm N\bs\phi_\Sigma^h)+\mathrm D(s/c)\phi_f^h.
\end{eqnarray}
\end{subequations}
Then $(\bff u^h,v^h)\in \bff H^1(\mathbb R^d\setminus\Gamma)\times H^1(\mathbb R^d\setminus\Gamma)$ satisfies:
\begin{subequations}\label{eq:3.4}
\begin{eqnarray}
\label{eq:3.4a}
	\Delta^* \bff u^h-\rho_\Sigma s^2 \bff u^h &=& \bff 0 \qquad \mbox{in $\mathbb R^d\setminus\Gamma$},\\
\label{eq:3.4b}
	\Delta v^h-(s/c)^2 v^h &=& 0 \qquad \mbox{in $\mathbb R^d\setminus\Gamma$},\\
\label{eq:3.4c}
	s\mathrm N\jump{\gamma\bff u^h}-\jump{\partial_\nu v^h} &=&-\lambda_0,\\
\label{eq:3.4d}
	\jump{\bff t(\bff u^h)}-\rho_f s \mathrm N^t \jump{\gamma v^h} &=& -\rho_f s \mathrm N^t\phi_0,\\
\label{eq:3.4e}
	(\jump{\gamma\bff u^h},\jump{\gamma v^h}) &\in & \bff Y_h \times Y_h,\\
\label{eq:3.4f}
	(s\mathrm N\gamma^+\bff u^h+\partial_\nu^- v^h,
		\bff t^+(\bff u^h)+\rho_f s \mathrm N^t \gamma^- v^h) &\in & \bff Y_h^\circ\times Y_h^\circ.
\end{eqnarray}
\end{subequations}
Reciprocally, given a solution of \eqref{eq:3.4} the functions
\begin{equation}\label{eq:3.5}
(\bs\phi_\Sigma^h,\phi_f^h):=(\jump{\gamma\bff u^h},-\jump{\gamma v^h}) \in \bff Y_h\times Y_h
\end{equation}
satisfy \eqref{eq:3.1}.
\end{proposition}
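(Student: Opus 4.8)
The plan is to follow the proof of the Theorem of Section~\ref{sec:2} almost line for line, the one new feature being that the potentials $\bff u^h$ and $v^h$ now live on both sides of $\Gamma$, so that the Galerkin equations \eqref{eq:3.1} surface as the polar-set condition \eqref{eq:3.4f} rather than as a one-sided transmission condition. For the direct implication I would start from $(\bs\phi_\Sigma^h,\phi_f^h)\in\bff Y_h\times Y_h$ solving \eqref{eq:3.1} and from $\bff u^h,v^h$ defined by \eqref{eq:3.3}. Since both are layer potentials of the corresponding resolvent operators, \eqref{eq:3.4a}--\eqref{eq:3.4b} hold automatically. Writing $\bff u^h=\bff S(s)\bs\eta-\bff D(s)\bs\varphi$ with $\bs\varphi=\bs\phi_\Sigma^h$, $\bs\eta=-\rho_f s\,\mathrm N^t(\phi_0+\phi_f^h)$, and $v^h=\mathrm S(s/c)\eta-\mathrm D(s/c)\varphi$ with $\varphi=-\phi_f^h$, $\eta=\lambda_0+s\mathrm N\bs\phi_\Sigma^h$, the defining jump relations of the elastic and acoustic Calder\'on calculi give
\[
\jump{\gamma\bff u^h}=\bs\phi_\Sigma^h,\qquad
\jump{\bff t(\bff u^h)}=-\rho_f s\,\mathrm N^t(\phi_0+\phi_f^h),\qquad
\jump{\gamma v^h}=-\phi_f^h,\qquad
\jump{\partial_\nu v^h}=\lambda_0+s\mathrm N\bs\phi_\Sigma^h.
\]
Substituting these into the left-hand sides of \eqref{eq:3.4c}, \eqref{eq:3.4d} and \eqref{eq:3.4e} verifies those three identities by elementary algebra, with no use of \eqref{eq:3.1}. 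For \eqref{eq:3.4f} I would use the same four jumps to rewrite the traces appearing there, obtaining $\bff t^+(\bff u^h)+\rho_f s\,\mathrm N^t\gamma^- v^h=\bff t^-(\bff u^h)+\rho_f s\,\mathrm N^t(\gamma^+ v^h+\phi_0)$ and its acoustic counterpart $s\mathrm N\gamma^+\bff u^h+\partial_\nu^- v^h=s\mathrm N\gamma^-\bff u^h+\partial_\nu^+ v^h+\lambda_0$, and then run exactly the limit-formula computation with \eqref{eq:2.7} carried out in the reciprocal half of the Section~\ref{sec:2} proof; that computation identifies the right-hand sides above with the two entries of $\mathbb L(s)(\bs\phi_\Sigma^h,\phi_f^h)-\mathbb R(s)(\lambda_0,\phi_0)$, and by \eqref{eq:3.1}, equivalently \eqref{eq:3.2}, this residual lies in $(\bff Y_h\times Y_h)^\circ$, which is \eqref{eq:3.4f}.

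For the converse I would take a solution $(\bff u^h,v^h)$ of \eqref{eq:3.4} and set $(\bs\phi_\Sigma^h,\phi_f^h):=(\jump{\gamma\bff u^h},-\jump{\gamma v^h})$; this pair lies in $\bff Y_h\times Y_h$ by \eqref{eq:3.4e}. Because of \eqref{eq:3.4a}--\eqref{eq:3.4b} and the uniqueness in the transmission problems that define the potentials, $\bff u^h$ and $v^h$ must equal the potentials generated by their own jumps, $\bff u^h=\bff S(s)\jump{\bff t(\bff u^h)}-\bff D(s)\jump{\gamma\bff u^h}$ and $v^h=\mathrm S(s/c)\jump{\partial_\nu v^h}-\mathrm D(s/c)\jump{\gamma v^h}$; inserting the jump data prescribed by \eqref{eq:3.4c}--\eqref{eq:3.4d} shows that these coincide with \eqref{eq:3.3} for the present $(\bs\phi_\Sigma^h,\phi_f^h)$. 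Running the Section~\ref{sec:2} computation once more identifies the entries of $\mathbb L(s)(\bs\phi_\Sigma^h,\phi_f^h)-\mathbb R(s)(\lambda_0,\phi_0)$ with $\bff t^-(\bff u^h)+\rho_f s\,\mathrm N^t(\gamma^+ v^h+\phi_0)$ and $s\mathrm N\gamma^-\bff u^h+\partial_\nu^+ v^h+\lambda_0$; rewriting these through \eqref{eq:3.4c}--\eqref{eq:3.4d} in the form that appears in \eqref{eq:3.4f} turns \eqref{eq:3.4f} into the statement that the residual belongs to $(\bff Y_h\times Y_h)^\circ$, i.e.\ into \eqref{eq:3.2}, hence into \eqref{eq:3.1}.

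I do not anticipate a genuine analytic obstacle: everything reduces to the Betti/Green limit formulas \eqref{eq:2.7} already exploited in Section~\ref{sec:2}, to the jump relations of the layer potentials, and to the boundedness of $\mathbb L(s)$ and $\mathbb R(s)$ recorded above. The delicate part is bookkeeping --- matching the mixed-side trace combinations in \eqref{eq:3.4c}, \eqref{eq:3.4d} and \eqref{eq:3.4f} against the four jumps and against the two rows of the boundary integral residual, and carrying the minus sign in $\phi_f^h=-\jump{\gamma v^h}$ correctly through the acoustic double-layer terms. The one conceptual point, borrowed from \cite{LaSa:2009a}, is that after discretization the elastic and acoustic fields no longer vanish on the side of $\Gamma$ opposite to their physical domain, so the faithful reformulation is the two-sided problem \eqref{eq:3.4}, with \eqref{eq:3.1} encoded in the polar condition \eqref{eq:3.4f}, rather than a one-sided problem of the type \eqref{eq:2.3}.
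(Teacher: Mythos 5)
Your proposal is correct and follows essentially the same route as the paper's proof: the PDEs and the jump relations of the layer potentials give \eqref{eq:3.4a}--\eqref{eq:3.4e} directly, the key identity identifying the residual traces with $\mathbb L(s)(\bs\phi_\Sigma^h,\phi_f^h)-\mathbb R(s)(\lambda_0,\phi_0)$ (the paper's \eqref{eq:3.6}) converts \eqref{eq:3.1} into \eqref{eq:3.4f}, and the converse reverses the argument via the two-sided representation formula. The only difference is presentational: the paper simply asserts \eqref{eq:3.6} from the jump properties, whereas you derive it by shifting traces across $\Gamma$ and reusing the limit-formula computation from the proof in Section~\ref{sec:2}, which is a legitimate and slightly more explicit way to get the same identity.
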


\begin{proof}
It is clear that the functions defined by \eqref{eq:3.3} satisfy \eqref{eq:3.4a} and \eqref{eq:3.4b}. Moreover, $\jump{\gamma \bff u^h}=\bs\phi_\Sigma^h$ and $\jump{\gamma v^h}=-\phi_f^h$, and therefore \eqref{eq:3.4e} is satisfied. At the same time
\begin{eqnarray*}
\jump{\bff t(\bff u^h)} &=&-\rho_f s\mathrm N^t(\phi_0+\phi_f^h)=-\rho_f s\mathrm N^t(\phi_0-\jump{\gamma v^h}),\\
\jump{\partial_\nu v^h} &=&\lambda_0+s\mathrm N\bs\phi_\Sigma^h=\lambda_0+s\mathrm N\jump{\gamma \bff u^h},
\end{eqnarray*}
which proves \eqref{eq:3.4c} and \eqref{eq:3.4d}. Finally, using the jump properties of the potentials, it is easy to verify that
\begin{equation}\label{eq:3.6}
(s\mathrm N\gamma^+\bff u^h+\partial_\nu^- v^h,
		\bff t^+(\bff u^h)+\rho_f s \mathrm N^t \gamma^- v^h)
	=\mathbb L(s)(\bs\phi_\Sigma^h,\phi_f^h)-\mathbb R(s)(\lambda_0,\phi_0),
\end{equation}
which proves \eqref{eq:3.4f} (see \eqref{eq:3.2}). 

Reciprocally, if we are given a solution of \eqref{eq:3.4} and we define $(\bs\phi_\Sigma^h,\phi_f^h)$ with \eqref{eq:3.5}, then by the representation formulas and \eqref{eq:3.4c}-\eqref{eq:3.4d}, it follows that we can write the fields $(\bff u^h,v^h)$ as in \eqref{eq:3.3}. We can then use \eqref{eq:3.6} again and prove that \eqref{eq:3.4f} implies \eqref{eq:3.1}.
\end{proof}

The next step consists of finding a variational formulation for \eqref{eq:3.4}. This will be done in the space
\[
\mathbb H_h:=
	\{ (\bff u^h,v^h)\in \bff H^1(\mathbb R^d\setminus\Gamma)\times H^1(\mathbb R^d\setminus\Gamma)
		\,:\,
	(\jump{\gamma\bff u^h},\jump{\gamma v^h}) \in  \bff Y_h\times Y_h\},
\]
which incorporates the only homogeneous essential transmission conditions of \eqref{eq:3.4}. 

\begin{proposition}[Equivalent variational formulation]\label{prop:3.2}
Problem \eqref{eq:3.4} is equivalent to finding
\begin{equation}\label{eq:3.7}
(\bff u^h,v^h)\in \mathbb H_h\quad\mbox{s.t.}\quad
B((\bff u^h,v^h),(\bff w,w);s)=\ell((\bff w,w);s)\qquad\forall (\bff w,w)\in \mathbb H_h,
\end{equation}
where
\begin{eqnarray*}
B((\bff u,v),(\bff w,w);s)
	&:=& (\bs\sigma(\bff u),\bs\varepsilon(\bff w))_{\mathbb R^d\setminus\Gamma}
		+\rho_\Sigma s^2 (\bff u,\bff w)_{\mathbb R^d}\\
	&& + \rho_f (\nabla v,\nabla w)_{\mathbb R^d\setminus\Gamma}+\rho_f(s/c)^2 (v,w)_{\mathbb R^d}\\
	&& +\rho_f s \big(\langle \gamma^+ v, \mathrm N\gamma^- \bff w\rangle_\Gamma 
			-\langle \gamma^- v,\mathrm N \gamma^+\bff w\rangle_\Gamma 
			\\
	&& \qquad\quad +\langle\mathrm N \gamma^+\bff u,\gamma^- w\rangle_\Gamma	
			-\langle\mathrm N\gamma^-\bff u,\gamma^+ w\rangle_\Gamma\big),\\
\ell((\bff w,w);s) &:=& \rho_f \left( \langle \lambda_0,\gamma^+ w\rangle_\Gamma 
						-s\langle \mathrm N^t\phi_0,\gamma^- \bff w\rangle_\Gamma\right).
\end{eqnarray*}
\end{proposition}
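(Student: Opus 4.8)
The plan is to obtain both implications from a single integration-by-parts identity, in the spirit of \cite{LaSa:2009a}. Assume first that $(\bff u^h,v^h)$ solves \eqref{eq:3.4}; then \eqref{eq:3.4e} says precisely that $(\bff u^h,v^h)\in\mathbb H_h$, so it is an admissible trial function. Fix $(\bff w,w)\in\mathbb H_h$. Test \eqref{eq:3.4a} against $\bff w$ and apply Betti's formula (the defining identity of $\bff t^\mp$) in $\Omega_-$ and $\Omega_+$ separately; test \eqref{eq:3.4b} against $w$ and use Green's first identity in $\Omega_\mp$; use \eqref{eq:3.4a}--\eqref{eq:3.4b} to convert the volume terms $(\Delta^*\bff u^h,\bff w)$ and $(\Delta v^h,w)$ into mass terms; finally add $\rho_f$ times the acoustic identity to the elastic one. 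The result has the first two lines of $B((\bff u^h,v^h),(\bff w,w);s)$ on the left and the boundary functional
\[
\langle \bff t^-(\bff u^h),\gamma^-\bff w\rangle_\Gamma-\langle \bff t^+(\bff u^h),\gamma^+\bff w\rangle_\Gamma
+\rho_f\langle \partial_\nu^- v^h,\gamma^- w\rangle_\Gamma-\rho_f\langle \partial_\nu^+ v^h,\gamma^+ w\rangle_\Gamma
\]
on the right.

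The core of the argument is the reorganization of this boundary functional. Writing each exterior trace of the test pair as an interior trace minus a jump, the functional splits into a part carrying $\jump{\bff t(\bff u^h)}$ and $\jump{\partial_\nu v^h}$ and a part carrying $\jump{\gamma\bff w}$ and $\jump{\gamma w}$. Into the first part one substitutes the transmission conditions \eqref{eq:3.4c}--\eqref{eq:3.4d}; using repeatedly that $\mathrm N^t$ is the transpose of $\mathrm N$, i.e.\ $\langle\mathrm N^t\punto,\punto\rangle_\Gamma=\langle\punto,\mathrm N\punto\rangle_\Gamma$, this part reassembles into exactly $\ell((\bff w,w);s)$ plus the four cross terms $\rho_f s(\langle\gamma^+ v^h,\mathrm N\gamma^-\bff w\rangle_\Gamma-\dots)$ of $B$. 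The remaining part equals $\langle \bff t^+(\bff u^h)+\rho_f s\mathrm N^t\gamma^- v^h,\jump{\gamma\bff w}\rangle_\Gamma+\rho_f\langle s\mathrm N\gamma^+\bff u^h+\partial_\nu^- v^h,\jump{\gamma w}\rangle_\Gamma$, and its two multipliers are precisely the quantities in \eqref{eq:3.4f}, which therefore annihilate the admissible jumps $\jump{\gamma\bff w}\in\bff Y_h$ and $\jump{\gamma w}\in Y_h$. Collecting what survives yields $B((\bff u^h,v^h),(\bff w,w);s)=\ell((\bff w,w);s)$, i.e.\ \eqref{eq:3.7}.

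For the converse, let $(\bff u^h,v^h)\in\mathbb H_h$ solve \eqref{eq:3.7}; membership in $\mathbb H_h$ is already \eqref{eq:3.4e}. First, testing against pairs $(\bff w,0)$ and $(0,w)$ with $\bff w,w$ smooth and compactly supported in $\Omega_-\cup\Omega_+$ (these lie in $\mathbb H_h$ with vanishing jumps and traces, so the cross terms and $\ell$ drop out) gives $(\bs\sigma(\bff u^h),\bs\varepsilon(\bff w))_{\mathbb R^d\setminus\Gamma}+\rho_\Sigma s^2(\bff u^h,\bff w)_{\mathbb R^d}=0$ and its acoustic analogue, hence \eqref{eq:3.4a}--\eqref{eq:3.4b}; since the right-hand sides then lie in $\bff L^2$ and $L^2$, the weak traces $\bff t^\pm(\bff u^h)$, $\partial_\nu^\pm v^h$ are well defined and the integration by parts above may be reversed, turning \eqref{eq:3.7} into an identity---valid for all $(\bff w,w)\in\mathbb H_h$---equating the boundary functional plus the cross terms of $B$ to $\ell$. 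Restricting to test pairs with vanishing jumps, whose common traces are then arbitrary in $\bff H^{1/2}(\Gamma)$ and $H^{1/2}(\Gamma)$ (use a bounded right inverse of the trace), collapses the identity to $\langle\jump{\bff t(\bff u^h)}-\rho_f s\mathrm N^t\jump{\gamma v^h}+\rho_f s\mathrm N^t\phi_0,\bs g\rangle_\Gamma+\rho_f\langle\jump{\partial_\nu v^h}-s\mathrm N\jump{\gamma\bff u^h}-\lambda_0,g\rangle_\Gamma=0$ for all $\bs g$ and $g$, which is \eqref{eq:3.4c}--\eqref{eq:3.4d}. Feeding these back in, the identity for a general $(\bff w,w)\in\mathbb H_h$ reduces to a pairing of the quantities in \eqref{eq:3.4f} against $\jump{\gamma\bff w}$ and $\jump{\gamma w}$; as these range over all of $\bff Y_h$ and $Y_h$ (using one-sided extensions), this forces \eqref{eq:3.4f}.

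The step I expect to be most delicate is the reorganization of the boundary functional: keeping the signs straight, matching which one-sided trace multiplies which, and applying the transpositions $\langle\mathrm N^t\punto,\punto\rangle_\Gamma=\langle\punto,\mathrm N\punto\rangle_\Gamma$ so that it splits cleanly into ``$\ell$ plus the cross terms of $B$'' and ``a polar quantity paired against the admissible jumps''. The other ingredients---Betti's and Green's formulas, the extension and density arguments that make the test traces arbitrary, and the elliptic upgrade $\Delta^*\bff u^h\in\bff L^2$, $\Delta v^h\in L^2$---are routine.
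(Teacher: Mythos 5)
Your proposal is correct and follows essentially the same route as the paper: both rest on a single integration-by-parts identity (the paper's \eqref{eq:3.8}), reorganize the boundary functional into jump residuals paired with one-sided traces plus polar quantities paired with the test jumps, and prove the converse by first recovering the PDEs with compactly supported tests and then exploiting the surjectivity of $(\bff w,w)\mapsto(\gamma^+w,\gamma^-\bff w,\jump{\gamma\bff w},\jump{\gamma w})$ on $\mathbb H_h$. The only caveat is that to land exactly on the stated $\ell$ and cross terms one must pair $\jump{\partial_\nu v^h}$ with $\gamma^+ w$ but $\jump{\bff t(\bff u^h)}$ with $\gamma^-\bff w$, so your uniform ``exterior trace $=$ interior trace minus jump'' prescription is slightly off for the acoustic field; your stated end formulas, however, already reflect the correct asymmetric grouping.
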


\begin{proof}
The definition of the normal traction for $\bff u$ and the normal derivative for $v$, plus simple algebraic manipulations, show that
\begin{alignat*}{6}
&\hspace{-1cm}
	\rho_f(\Delta v,w)_{\mathbb R^d\setminus\Gamma}
	+\rho_f(\nabla v,\nabla w)_{\mathbb R^d\setminus\Gamma}
	 +(\Delta^* \bff u,\bff w)_{\mathbb R^d\setminus\Gamma}
	+(\bs\sigma (\bff u),\bs\varepsilon(\bff w))_{\mathbb R^d\setminus\Gamma}\\
 =& \langle \jump{\bff t(\bff u)},\gamma^- \bff w\rangle_\Gamma
	+\langle \bff t^+(\bff u),\jump{\gamma \bff w}\rangle_\Gamma
	+\rho_f \langle \partial^- v,\jump{\gamma w}\rangle_\Gamma
	+\rho_f \langle\jump{\partial_\nu v},\gamma^+ w\rangle_\Gamma\\
=& \langle \bff t^+(\bff u)+\rho_f s \mathrm N^t\gamma^- v,\jump{\gamma\bff w}\rangle_\Gamma
	+\rho\langle\partial_\nu^- v+s\mathrm N\gamma^+\bff u,\jump{\gamma w}\rangle_\Gamma \\
	&+\langle \jump{\bff t(\bff u)}-\rho_f s \mathrm N^t\jump{\gamma v},\gamma^- \bff w\rangle_\Gamma
		+\rho_f\langle \jump{\partial_\nu v}-s\mathrm N \jump{\gamma \bff u},\gamma^+ w\rangle_\Gamma\\
	& +\rho_f s \left(\langle\mathrm N^t\gamma^- v,\gamma^+\bff w\rangle_\Gamma
			-\langle\mathrm N^t\gamma^+ v,\gamma^-\bff w\rangle_\Gamma
			+ \langle \mathrm N\gamma^-\bff u,\gamma^+ w\rangle_\Gamma
			-\langle \mathrm N\gamma^+ \bff u,\gamma^-w\rangle_\Gamma\right), 
\end{alignat*}
or equivalently
\begin{alignat}{6}\label{eq:3.8}
\nonumber
&\hspace{-2cm} B((\bff u,v),(\bff w,w);s)
		+(\Delta^* \bff u-\rho_\Sigma s^2\bff u,\bff w)_{\mathbb R^d\setminus\Gamma}
		+\rho_f(\Delta v-(s/c)^2 v,w)_{\mathbb R^d\setminus\Gamma}\\
=& \langle \bff t^+(\bff u)+\rho_f s \mathrm N^t\gamma^- v,\jump{\gamma\bff w}\rangle_\Gamma
	+\rho\langle\partial_\nu^- v+s\mathrm N\gamma^+\bff u,\jump{\gamma w}\rangle_\Gamma \\
\nonumber
	&-\rho_f\langle s\mathrm N \jump{\gamma \bff u}-\jump{\partial_\nu v},\gamma^+ w\rangle_\Gamma
		+\langle \jump{\bff t(\bff u)}-\rho_f s \mathrm N^t\jump{\gamma v},\gamma^- \bff w\rangle_\Gamma.
\end{alignat}
From here it is clear that a solution of \eqref{eq:3.4} satisfies \eqref{eq:3.7}. Reciprocally, if we have a solution of \eqref{eq:3.7}, 	testing with smooth functions with compact support in $\mathbb R^d\setminus\Gamma$, we can easily recover equations \eqref{eq:3.4a} and \eqref{eq:3.4b}. Therefore, by \eqref{eq:3.8} it follows that
\begin{alignat*}{6}
-\rho_f\langle s\mathrm N \jump{\gamma \bff u^h}-\jump{\partial_\nu v^h}+\lambda_0,\gamma^+ w\rangle_\Gamma
		+\langle \jump{\bff t(\bff u^h)}-\rho_f s \mathrm N^t(\jump{\gamma v^h}+\phi_0),\gamma^- \bff w\rangle_\Gamma &\\
+\langle \bff t^+(\bff u^h)+\rho_f s \mathrm N^t\gamma^- v^h,\jump{\gamma\bff w}\rangle_\Gamma
	+\rho\langle\partial_\nu^- v^h+s\mathrm N\gamma^+\bff u^h,\jump{\gamma w}\rangle_\Gamma 
		&=0,
\end{alignat*}
for all $(\bff w,w)\in \mathbb H_h$. The transmission conditions \eqref{eq:3.4c}, \eqref{eq:3.4d}, and \eqref{eq:3.4f} follow from the simple observation that the map
\[
\begin{array}{rcl} 
	\mathbb H_h & \longrightarrow &H^{1/2}(\Gamma) \times \bff H^{1/2}(\Gamma)\times \bff Y_h \times Y_h \\
	(\bff w,w) & \longmapsto & (\gamma^+ w,\gamma^-\bff w,\jump{\gamma\bff w},\jump{\gamma w})
\end{array}
\]
is surjective.
\end{proof}

The third step in the analysis is the proof of well-posedness of the variational problem \eqref{eq:3.7}. Following \cite{LaSa:2009a}, we define the energy norm
\[
\triple{(\bff u,v)}_{|s|}^2:=
	(\bs\sigma(\bff u),\bs\varepsilon(\overline{\bff u}))_{\mathbb R^d\setminus\Gamma}
	+  \| s \sqrt{\rho_\Sigma} \bff u\|_{\mathbb R^d}^2 
	+\rho_f \| \nabla v\|_{\mathbb R^d\setminus\Gamma}^2
	+\rho_f c^{-2} \| s\,v\|_{\mathbb R^d}^2.
\]
We will also write $\sigma:=\mathrm{Re}\,s>0$ (for all $s\in \mathbb C_+$) and $\underline\sigma:=\min\{\sigma,1\}$. To shorten some of the forthcoming expressions, we will denote:
\begin{alignat*}{6}
\| (\bff u,v)\|_{1,\mathbb R^d\setminus\Gamma}^2 
	&:=(\boldsymbol\sigma(\mathbf u),\boldsymbol\varepsilon(\mathbf u))_{\mathbb R^d\setminus\Gamma}^2
		+\| \sqrt{\rho_\Sigma}\bff u\|_{\mathbb R^d}^2
		+\rho_f\|\nabla v\|_{\mathbb R^d\setminus\Gamma}^2
		+\rho_f c^{-2}\| v\|_{\mathbb R^d}^2,\\
\|(\bs\phi,\phi)\|_{1/2,\Gamma}^2 
	&:=\|\bs\phi\|_{1/2,\Gamma}^2+\|\phi\|_{1/2,\Gamma}^2,\\
\|(\lambda,\varphi)\|_{-1/2,1/2,\Gamma}^2
	&:=\|\lambda\|_{-1/2,\Gamma}^2+\|\varphi\|_{1/2,\Gamma}^2.
\end{alignat*}
Note that the energy norm and the first of the above norms are related by
\begin{equation}\label{eq:3.9}
\underline\sigma \| (\bff u,v)\|_{1,\mathbb R^d\setminus\Gamma}
	\le \triple{(\bff u,v)}_{|s|}
	\le \frac{|s|}{\underline\sigma}\| (\bff u,v)\|_{1,\mathbb R^d\setminus\Gamma}.
\end{equation}
Finally, the expression {\em independent of $h$} will be used to mean independent of the choice of the spaces $\bff Y_h$ and $Y_h$.

\begin{proposition}[Well-posedness]\label{prop:3.3}
Problem \eqref{eq:3.7} is uniquely solvable for any $(\lambda_0,\phi_0)\in H^{-1/2}(\Gamma)\times H^{1/2}(\Gamma)$ and $s\in \mathbb C_+$. Moreover, there exists $C>0$, independent of $h$, such that
\begin{equation}\label{eq:3.10}
\triple{(\bff u^h,v^h)}_{|s|} \le C \frac{|s|}{\underline\sigma\, \sigma} 
	\|(\lambda_0,s\phi_0)\|_{-1/2,1/2,\Gamma}.
\end{equation}
\end{proposition}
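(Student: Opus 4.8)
The plan is to recognize \eqref{eq:3.7} as a coercive problem, after conjugating the test function and rotating by $\bar s$, and then invoke the Lax--Milgram lemma over $\mathbb C$. First I would note that $\mathbb H_h$ is a Hilbert space: it is a closed subspace of $\bff H^1(\mathbb R^d\setminus\Gamma)\times H^1(\mathbb R^d\setminus\Gamma)$ because $\bff Y_h,Y_h$ are closed and the jump maps $(\bff u,v)\mapsto(\jump{\gamma\bff u},\jump{\gamma v})$ are bounded. For a fixed $s\in\mathbb C_+$, $\triple{\punto}_{|s|}$ is a Hilbert norm on $\mathbb H_h$ (positive definiteness is supplied by the terms $\|s\sqrt{\rho_\Sigma}\bff u\|_{\mathbb R^d}^2$ and $\rho_f c^{-2}\|s v\|_{\mathbb R^d}^2$, the elastic form $(\bs\sigma(\punto),\bs\varepsilon(\overline{\punto}))$ being Hermitian positive semidefinite) and, by \eqref{eq:3.9}, equivalent to the ambient $H^1$-norm with $s$-dependent constants. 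So I would work in $(\mathbb H_h,\triple{\punto}_{|s|})$ and aim to apply Lax--Milgram to the sesquilinear form $a(U,W):=\bar s\,B(U,\overline W;s)$ and the antilinear functional $F(W):=\bar s\,\ell(\overline W;s)$.

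The decisive step is the coercivity estimate. Testing \eqref{eq:3.7} with $(\bff w,w)=(\overline{\bff u^h},\overline{v^h})$ and multiplying by $\bar s$, the four volume terms of $B$ produce $\bar s(\bs\sigma(\bff u^h),\bs\varepsilon(\overline{\bff u^h}))$, $\bar s s^2\rho_\Sigma\|\bff u^h\|_{\mathbb R^d}^2$, $\bar s\rho_f\|\nabla v^h\|_{\mathbb R^d\setminus\Gamma}^2$ and $\bar s s^2\rho_f c^{-2}\|v^h\|_{\mathbb R^d}^2$; since $\mathrm{Re}(\bar s)=\sigma$ and $\mathrm{Re}(\bar s s^2)=|s|^2\sigma$, their real parts add up to exactly $\sigma\,\triple{(\bff u^h,v^h)}_{|s|}^2$. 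The four boundary terms, multiplied by $\rho_f s\bar s=\rho_f|s|^2$, are purely imaginary: writing $T_1:=\langle\gamma^+v^h,\mathrm N\gamma^-\overline{\bff u^h}\rangle_\Gamma$ and $T_2:=\langle\gamma^-v^h,\mathrm N\gamma^+\overline{\bff u^h}\rangle_\Gamma$, the identity $\langle\mathrm N\bs\phi,\psi\rangle_\Gamma=\int_\Gamma(\bs\phi\punto\bs\nu)\,\psi$ together with $\bs\nu$ being real-valued gives $\langle\mathrm N\gamma^-\bff u^h,\gamma^+\overline{v^h}\rangle_\Gamma=\overline{T_1}$ and $\langle\mathrm N\gamma^+\bff u^h,\gamma^-\overline{v^h}\rangle_\Gamma=\overline{T_2}$, so the bracketed sum equals $T_1-T_2+\overline{T_2}-\overline{T_1}=2\mathrm{i}\,\mathrm{Im}(T_1-T_2)$. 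Hence
\[
\mathrm{Re}\,\big(\bar s\,B((\bff u^h,v^h),(\overline{\bff u^h},\overline{v^h});s)\big)=\sigma\,\triple{(\bff u^h,v^h)}_{|s|}^2,
\]
i.e. $a$ is coercive on $\mathbb H_h$ with constant $\sigma$.

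For boundedness I would estimate the volume part of $B$ by $\triple{\punto}_{|s|}\triple{\punto}_{|s|}$ via Cauchy--Schwarz for the Hermitian elastic form and $|s^2|=|s|^2$, and the boundary part by $C|s|\,\|\gamma^\pm v\|_{1/2,\Gamma}\|\gamma^\mp\bff w\|_{1/2,\Gamma}\le C|s|\,\|(\bff u,v)\|_{1,\mathbb R^d\setminus\Gamma}\|(\bff w,w)\|_{1,\mathbb R^d\setminus\Gamma}$, using boundedness of $\mathrm N$ on $H^{1/2}(\Gamma)$, the trace theorem on each side of $\Gamma$, and the fact that $\|\punto\|_{1,\mathbb R^d\setminus\Gamma}$ controls the full $H^1$-norm (Korn's second inequality on $\Omega_\pm$); \eqref{eq:3.9} then returns this to $\triple{\punto}_{|s|}$, giving $|a(U,W)|\le C\,|s|^2\underline\sigma^{-2}\triple{U}_{|s|}\triple{W}_{|s|}$ with $C$ independent of $h$ and $s$. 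The Lax--Milgram lemma yields a unique $(\bff u^h,v^h)\in\mathbb H_h$ solving \eqref{eq:3.7}, with $\triple{(\bff u^h,v^h)}_{|s|}\le\sigma^{-1}\|F\|_\star$, where $\|\punto\|_\star$ is the dual norm with respect to $\triple{\punto}_{|s|}$. Finally I would bound the data: $|\bar s\,\ell((\bff w,w);s)|\le|s|\rho_f\big(\|\lambda_0\|_{-1/2,\Gamma}\|\gamma^+w\|_{1/2,\Gamma}+C|s|\,\|\phi_0\|_{1/2,\Gamma}\|\gamma^-\bff w\|_{1/2,\Gamma}\big)$ using $\langle\mathrm N^t\phi_0,\gamma^-\bff w\rangle_\Gamma=\langle\mathrm N\gamma^-\bff w,\phi_0\rangle_\Gamma$; bounding the traces by $\|(\bff w,w)\|_{1,\mathbb R^d\setminus\Gamma}\le\underline\sigma^{-1}\triple{(\bff w,w)}_{|s|}$ via \eqref{eq:3.9} gives $\|F\|_\star\le C\,|s|\,\underline\sigma^{-1}\|(\lambda_0,s\phi_0)\|_{-1/2,1/2,\Gamma}$, and dividing by $\sigma$ produces \eqref{eq:3.10}.

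The only genuinely delicate point is the algebra showing that the boundary coupling terms are skew-adjoint once the test function is conjugated, since this is exactly what makes the real part of $\bar s B$ clean and the coercivity constant equal to $\sigma$ with no dependence on the subspaces; the volume energy identity, the Cauchy--Schwarz/trace bounds, the application of Lax--Milgram, and the data estimate are all routine. Note that taking $\bff Y_h=\bff H^{1/2}(\Gamma)$, $Y_h=H^{1/2}(\Gamma)$ the same argument proves well-posedness of the undiscretized system \eqref{eq:2.4}, and that because none of the constants used (boundedness of $\mathrm N$, trace and Korn constants on $\Omega_\pm$) depend on the subspaces, the constant $C$ in \eqref{eq:3.10} is indeed independent of $h$.
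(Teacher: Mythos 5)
Your proof is correct and follows essentially the same route as the paper: the energy identity $\mathrm{Re}\bigl(\overline s\,B((\bff u,v),(\overline{\bff u},\overline v);s)\bigr)=\sigma\triple{(\bff u,v)}_{|s|}^2$ (the paper's \eqref{eq:3.11}, whose ``simple computation'' you carry out in full, correctly identifying the skew structure of the boundary coupling terms), the bound \eqref{eq:3.12} on $\ell$, Lax--Milgram, and the norm equivalence \eqref{eq:3.9} to produce \eqref{eq:3.10}. The only difference is that you spell out details the paper leaves implicit (closedness of $\mathbb H_h$, boundedness of the sesquilinear form via Korn and trace inequalities), all of which are accurate.
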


\begin{proof}
A simple computation shows that
\begin{equation}\label{eq:3.11}
\mathrm{Re}\,\left(\overline s B((\bff u,v),(\overline{\bff u},\overline v);s)\right)=
\sigma \triple{(\bff u,v)}_{|s|}^2,
\end{equation}
and that
\begin{equation}\label{eq:3.12}
|\ell ((\bff u,v);s)| \le C \rho_f \| (\lambda_0,s\phi_0)\|_{-1/2,1/2,\Gamma} \|(\bff u,v)\|_{1,\mathbb R^d\setminus\Gamma},
\end{equation}
which proves well-posedness of \eqref{eq:3.7} by the Lax-Milgram lemma. The estimate \eqref{eq:3.10} is a direct consequence of \eqref{eq:3.11} and \eqref{eq:3.12}, using \eqref{eq:3.9} to relate the norms. 
\end{proof}

The final step wraps up the analysis by collecting information from the previous results.

\begin{corollary}\label{cor:3.4}
Equations \eqref{eq:3.1} are uniquely solvable for all $s\in\mathbb C_+$ and any choice of the closed spaces $\bff Y_h$ and $Y_h$. Moreover, if $(\bff u^h,v^h)$ are defined using \eqref{eq:3.3} from the solution of \eqref{eq:3.1}, the following bounds hold with $C>0$ independent of $h$:
\begin{subequations}\label{eq:3.13}
\begin{eqnarray}
\label{eq:3.13a}
\| (\bff u^h,v^h)\|_{1,\mathbb R^d\setminus\Gamma}
	&\le & C \frac{|s|}{\underline\sigma^2 \sigma} \| (\lambda_0,s\phi_0)\|_{-1/2,1/2,\Gamma},\\
\label{eq:3.13b}
\| (\bs\phi^h_\Sigma,\phi^h_f)\|_{1/2,\Gamma}
	&\le & C \frac{|s|}{\underline\sigma^2 \sigma} \| (\lambda_0,s\phi_0)\|_{-1/2,1/2,\Gamma}.
\end{eqnarray}
\end{subequations}
\end{corollary}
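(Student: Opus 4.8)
The plan is to assemble the corollary by chaining together the three propositions just established. First, by Proposition~\ref{prop:3.1} and Proposition~\ref{prop:3.2}, the Galerkin equations \eqref{eq:3.1} are equivalent to the variational problem \eqref{eq:3.7}, which by Proposition~\ref{prop:3.3} is uniquely solvable for every $s\in\mathbb C_+$ and every choice of the closed spaces $\bff Y_h,Y_h$. Unique solvability of \eqref{eq:3.1} then follows immediately, because in finite dimensions (or in the non-discretized case) the correspondence $(\bs\phi_\Sigma^h,\phi_f^h)\leftrightarrow(\bff u^h,v^h)$ given by \eqref{eq:3.3} and \eqref{eq:3.5} is a bijection between solution sets; uniqueness on one side transfers to the other.

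Next I would derive \eqref{eq:3.13a}. Starting from the energy-norm bound \eqref{eq:3.10} of Proposition~\ref{prop:3.3}, I apply the lower inequality in \eqref{eq:3.9}, namely $\underline\sigma\,\|(\bff u^h,v^h)\|_{1,\mathbb R^d\setminus\Gamma}\le\triple{(\bff u^h,v^h)}_{|s|}$, which yields
\[
\|(\bff u^h,v^h)\|_{1,\mathbb R^d\setminus\Gamma}\le \frac{1}{\underline\sigma}\,\triple{(\bff u^h,v^h)}_{|s|}\le C\,\frac{|s|}{\underline\sigma^2\,\sigma}\,\|(\lambda_0,s\phi_0)\|_{-1/2,1/2,\Gamma},
\]
which is exactly \eqref{eq:3.13a}.

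For \eqref{eq:3.13b} I would use the identification \eqref{eq:3.5}, i.e. $(\bs\phi_\Sigma^h,\phi_f^h)=(\jump{\gamma\bff u^h},-\jump{\gamma v^h})$, together with the boundedness of the trace operators $\gamma^\pm:\bff H^1(\mathbb R^d\setminus\Gamma)\to\bff H^{1/2}(\Gamma)$ (and the scalar analogue), so that $\|(\bs\phi_\Sigma^h,\phi_f^h)\|_{1/2,\Gamma}\lesssim\|(\bff u^h,v^h)\|_{1,\mathbb R^d\setminus\Gamma}$ with a constant depending only on the geometry. Combining this with \eqref{eq:3.13a} gives \eqref{eq:3.13b}. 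Note one subtlety worth checking: the $\|\cdot\|_{1,\mathbb R^d\setminus\Gamma}$ norm contains the weighted terms $\rho_\Sigma$ and $\rho_f c^{-2}$, so I must make sure the trace inequality is applied to the genuine $\bff H^1$-norm; since the weights are fixed positive constants this only affects $C$ and not the $s$-dependence, so the stated form of \eqref{eq:3.13b} stands.

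The only real point requiring care — and the closest thing to an obstacle — is the trace estimate in the last step: one must confirm that the constant relating the boundary norms of the jumps to the broken $\bff H^1$ norm is independent of $h$ (it is, being purely a property of $\Gamma$ and independent of the subspaces $\bff Y_h,Y_h$) and independent of $s$ (it is, since traces do not see the Helmholtz/elastic parameters). Everything else is bookkeeping: chaining the equivalences, invoking \eqref{eq:3.10}, and applying \eqref{eq:3.9}. I would therefore present the proof in four short moves: (i) equivalence \eqref{eq:3.1}$\Leftrightarrow$\eqref{eq:3.7} and hence unique solvability; (ii) pass from the energy norm to $\|\cdot\|_{1,\mathbb R^d\setminus\Gamma}$ via \eqref{eq:3.9} to get \eqref{eq:3.13a}; (iii) bound the jump traces by the volume norm to get \eqref{eq:3.13b}; (iv) remark that all constants are independent of $h$ because none of the cited estimates involve $\bff Y_h$ or $Y_h$.
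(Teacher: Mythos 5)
Your proposal is correct and follows essentially the same route as the paper: equivalence of \eqref{eq:3.1} with \eqref{eq:3.7} via Propositions \ref{prop:3.1}--\ref{prop:3.2}, then \eqref{eq:3.10} combined with the lower bound in \eqref{eq:3.9} for \eqref{eq:3.13a}, and the identification \eqref{eq:3.5} plus trace boundedness for \eqref{eq:3.13b}. The only point you leave implicit (as does the paper) is that passing from $\|\cdot\|_{1,\mathbb R^d\setminus\Gamma}$ to the genuine broken $\bff H^1$ norm for the elastic component uses Korn's inequality, which again only affects the constant $C$.
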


\begin{proof}
Propositions \ref{prop:3.1} and \ref{prop:3.2} relate the discrete integral system \eqref{eq:3.1} to the variational problem \eqref{eq:3.7}, which is shown to be uniquely solvable in Proposition \ref{prop:3.3}. The estimate \eqref{eq:3.13a} follows from \eqref{eq:3.10} and \eqref{eq:3.9}. Finally, the bound \eqref{eq:3.13b} follows from \eqref{eq:3.13a} and \eqref{eq:3.5}.
\end{proof}

We end this section by noting that Corollary \ref{cor:3.4} implies the unique solvability of the semidiscrete equations that are obtained by taking the inverse Laplace transform of \eqref{eq:3.1}. They can also be translated into a time-domain estimate that bounds norms of the solution in terms of bounds for the data.

%
\subsection{The effect of Galerkin semidiscretization}
%

In this section we analyze the effect of space semidiscretization, that is, we estimate the difference between the solution of \eqref{eq:2.4} and \eqref{eq:3.1}. The analysis follows a very similar pattern to the one displayed in Section \ref{sec:3.1}. We start by writing the error equations:
\begin{equation}\label{eq:3.14}
\mathbb L(s) (\bs\phi^h_\Sigma-\bs\phi_\Sigma, \phi^h_f-\phi_f)\in \bff Y_h^\circ\times Y_h^\circ.
\end{equation}
We will develop the analysis in terms of the variables
$\bff e^h:=\bff u^h-u$ and $e^h:=v^h-v,$
from which the error of the boundary unknowns can be recovered:
\begin{equation}\label{eq:3.15}
(\bs\phi^h_\Sigma-\bs\phi_\Sigma, \phi^h_f-\phi_f)=( \jump{\gamma \bff e^h},-\jump{\gamma e^h}).
\end{equation}
The potential representation for $(\bff e^h,e^h)$ is obtained by subtracting \eqref{eq:2.5} from \eqref{eq:3.3}
\begin{subequations}\label{eq:3.16}
\begin{eqnarray}
\bff e^h &=& -\rho_f s \bff S(s)\mathrm N^t(\phi_f^h-\phi_f)-\bff D(s)(\bs\phi_\Sigma^h-\bs\phi_\Sigma),\\
e^h &=& s\mathrm S(s/c)\mathrm N(\bs\phi_\Sigma^h-\bs\phi_\Sigma)+\mathrm D(s/c)(\phi_f^h-\phi_f).
\end{eqnarray}
\end{subequations}
The proofs of the following results are quite similar to those of Propositions \ref{prop:3.1}, \ref{prop:3.2}, and \ref{prop:3.3}. We will only point out the main differences.

\begin{proposition}\label{prop:3.5}
The error potentials $\bff e^h:=\bff u^h-u$ and $e^h:=v^h-v$ satisfy:
\begin{subequations}\label{eq:3.17}
\begin{eqnarray}
\label{eq:3.17a}
	\Delta^* \bff e^h-\rho_\Sigma s^2 \bff e^h &=& \bff 0 \qquad \mbox{in $\mathbb R^d\setminus\Gamma$},\\
\label{eq:3.17b}
	\Delta e^h-(s/c)^2 e^h &=& 0 \qquad \mbox{in $\mathbb R^d\setminus\Gamma$},\\
\label{eq:3.17c}
	s\mathrm N\jump{\gamma\bff e^h}-\jump{\partial_\nu e^h} &=&0,\\
\label{eq:3.17d}
	\jump{\bff t(\bff e^h)}-\rho_f s \mathrm N^t \jump{\gamma e^h} &=&0,\\
\label{eq:3.17e}
	(\jump{\gamma\bff e^h},\jump{\gamma e^h})+(\bs\phi_\Sigma,-\phi_f) &\in & \bff Y_h \times Y_h,\\
\label{eq:3.17f}
	(s\mathrm N\gamma^+\bff e^h+\partial_\nu^- e^h,
		\bff t^+(\bff e^h)+\rho_f s \mathrm N^t \gamma^- e^h) &\in & \bff Y_h^\circ\times Y_h^\circ.
\end{eqnarray}
\end{subequations}
Reciprocally, given a solution of \eqref{eq:3.17}, the quantities defined in \eqref{eq:3.16} satisfy \eqref{eq:3.14}
\end{proposition}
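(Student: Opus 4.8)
The plan is to mirror the proof of Proposition~\ref{prop:3.1} essentially verbatim, since \eqref{eq:3.17} is the ``error version'' of \eqref{eq:3.4}: the only differences are that the right-hand sides of the transmission conditions now vanish (the data $\lambda_0,\phi_0$ having cancelled upon subtraction) and that the essential transmission condition \eqref{eq:3.17e} is now inhomogeneous, carrying the extra term $(\bs\phi_\Sigma,-\phi_f)$, which records that the \emph{exact} boundary unknowns need not lie in the discrete space $\bff Y_h\times Y_h$.

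First I would recall that $(\bff u^h,v^h)$ satisfies \eqref{eq:3.4} by Proposition~\ref{prop:3.1} and that $(\bff u,v)$ as defined by \eqref{eq:2.5} satisfies the analogue of \eqref{eq:3.4} in which $\bff Y_h\times Y_h$ is replaced by the full spaces $\bff H^{1/2}(\Gamma)\times H^{1/2}(\Gamma)$ --- that is, \eqref{eq:3.4e} holds trivially and \eqref{eq:3.4f} becomes the statement that the left-hand side of \eqref{eq:3.6} vanishes, which is precisely the integral equation \eqref{eq:2.4}. Subtracting the two PDEs gives \eqref{eq:3.17a}--\eqref{eq:3.17b} immediately since both pairs solve the same homogeneous resolvent equations. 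For \eqref{eq:3.17c}--\eqref{eq:3.17d}, I would compute $\jump{\gamma\bff e^h}=\bs\phi_\Sigma^h-\bs\phi_\Sigma$ and $\jump{\gamma e^h}=-(\phi_f^h-\phi_f)$ from \eqref{eq:3.16} (these are exactly \eqref{eq:3.15}), then combine the jump relations for $\jump{\bff t(\bff e^h)}$ and $\jump{\partial_\nu e^h}$ coming from the potential representations \eqref{eq:3.16}; the $\lambda_0$ and $\rho_f s\mathrm N^t\phi_0$ terms that appeared in \eqref{eq:3.4c}--\eqref{eq:3.4d} cancel in the difference, leaving the homogeneous identities. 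Condition \eqref{eq:3.17e} follows from $(\jump{\gamma\bff e^h},\jump{\gamma e^h})=(\bs\phi_\Sigma^h-\bs\phi_\Sigma,-(\phi_f^h-\phi_f))=(\bs\phi_\Sigma^h,\phi_f^h)-(\bs\phi_\Sigma,\phi_f)+(0,0)$ rearranged suitably, using $(\bs\phi_\Sigma^h,\phi_f^h)\in\bff Y_h\times Y_h$. Finally \eqref{eq:3.17f} is obtained exactly as \eqref{eq:3.4f} was: applying the jump/average limit formulas for the potentials to \eqref{eq:3.16} yields an identity of the same shape as \eqref{eq:3.6}, namely
\[
\big(s\mathrm N\gamma^+\bff e^h+\partial_\nu^- e^h,\ \bff t^+(\bff e^h)+\rho_f s\mathrm N^t\gamma^- e^h\big)
=\mathbb L(s)(\bs\phi_\Sigma^h-\bs\phi_\Sigma,\phi_f^h-\phi_f),
\]
and the right-hand side lies in $\bff Y_h^\circ\times Y_h^\circ$ by the error equation \eqref{eq:3.14}.

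For the converse, given a solution of \eqref{eq:3.17} I would use \eqref{eq:3.17a}--\eqref{eq:3.17b} together with the representation formulas for the elastic and acoustic resolvent operators to express $\bff e^h$ and $e^h$ through their Cauchy data; the homogeneous jump conditions \eqref{eq:3.17c}--\eqref{eq:3.17d} then force these representations into exactly the form \eqref{eq:3.16} with $(\bs\phi_\Sigma^h-\bs\phi_\Sigma,\phi_f^h-\phi_f)$ read off from the jumps via \eqref{eq:3.15}. Re-deriving the identity above shows that \eqref{eq:3.17f} is equivalent to $\mathbb L(s)(\bs\phi_\Sigma^h-\bs\phi_\Sigma,\phi_f^h-\phi_f)\in\bff Y_h^\circ\times Y_h^\circ$, which is \eqref{eq:3.14}.

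I do not expect any genuine obstacle here: the argument is a routine transcription of Proposition~\ref{prop:3.1}, and the paper itself flags that ``we will only point out the main differences.'' The one point requiring a moment of care is the bookkeeping in \eqref{eq:3.17e} --- keeping track of the sign conventions $\phi_f=\gamma^+v$ versus $\jump{\gamma v}=-\phi_f$, and correctly attributing the ``offset'' $(\bs\phi_\Sigma,-\phi_f)$ to the fact that the exact solution's boundary densities are not required to be discrete. Everything else (the cancellation of the data terms, the jump relations, the identification with $\mathbb L(s)$ acting on the density errors) is mechanical once Proposition~\ref{prop:3.1} is in hand.
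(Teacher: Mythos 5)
Your proposal is correct and follows exactly the route the paper intends: the paper omits the proof of Proposition \ref{prop:3.5}, stating only that it is "quite similar" to that of Proposition \ref{prop:3.1}, and your argument is precisely that transcription, with the data terms cancelling in the difference and the error identity $\bigl(s\mathrm N\gamma^+\bff e^h+\partial_\nu^- e^h,\ \bff t^+(\bff e^h)+\rho_f s\mathrm N^t\gamma^- e^h\bigr)=\mathbb L(s)(\bs\phi_\Sigma^h-\bs\phi_\Sigma,\phi_f^h-\phi_f)$ playing the role of \eqref{eq:3.6}. The sign bookkeeping in \eqref{eq:3.17e} is also right, since $(\jump{\gamma\bff e^h},\jump{\gamma e^h})+(\bs\phi_\Sigma,-\phi_f)=(\bs\phi_\Sigma^h,-\phi_f^h)\in\bff Y_h\times Y_h$.
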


\begin{proposition}\label{prop:3.6}
Problem \eqref{eq:3.17} is equivalent to the variational problem: find $(\bff e^h,e^h)\in \bff H^1(\mathbb R^d\setminus\Gamma)\times H^1(\mathbb R^d\setminus\Gamma)$ such that
\begin{subequations}\label{eq:3.18}
\begin{alignat}{6}
\label{eq:3.18a}
(\jump{\gamma\bff e^h}+\bs\phi_\Sigma,\jump{\gamma e^h}-\phi_f) \in \,& \bff Y_h \times Y_h,&&\\
\label{eq:3.18b}
B((\bff e^h,e^h),(\bff w,w);s)= \,& 0\qquad \qquad&&\forall (\bff w,w)\in \mathbb H_h,
\end{alignat}
\end{subequations}
\end{proposition}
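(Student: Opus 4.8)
The plan is to mimic exactly the proof of Proposition \ref{prop:3.2}, since the error problem \eqref{eq:3.17} differs from the Galerkin transmission problem \eqref{eq:3.4} only in that the right-hand sides of the jump conditions \eqref{eq:3.17c}--\eqref{eq:3.17d} are now zero, while the essential transmission condition \eqref{eq:3.17e} now involves the shift by $(\bs\phi_\Sigma,-\phi_f)$. First I would introduce the affine manifold
\[
\mathbb H_h^{\bs\phi}:=\{(\bff e,e)\in \bff H^1(\mathbb R^d\setminus\Gamma)\times H^1(\mathbb R^d\setminus\Gamma)\,:\,(\jump{\gamma\bff e}+\bs\phi_\Sigma,\jump{\gamma e}-\phi_f)\in \bff Y_h\times Y_h\},
\]
which is precisely the set of pairs satisfying \eqref{eq:3.18a} (equivalently \eqref{eq:3.17e}); note that the associated homogeneous space is exactly $\mathbb H_h$, so test functions $(\bff w,w)$ range over $\mathbb H_h$ as in \eqref{eq:3.18b}.

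The core of the argument is the algebraic identity \eqref{eq:3.8}, which holds verbatim for any pair $(\bff u,v)\in \bff H^1(\mathbb R^d\setminus\Gamma)\times H^1(\mathbb R^d\setminus\Gamma)$ and any test pair $(\bff w,w)\in \mathbb H_h$. Applying it to $(\bff e^h,e^h)$ and using that \eqref{eq:3.17a}--\eqref{eq:3.17b} make the two volumetric resolvent terms vanish, the left-hand side collapses to $B((\bff e^h,e^h),(\bff w,w);s)$, while on the right-hand side the two duality pairings against $\gamma^+w$ and $\gamma^-\bff w$ carry the factors $s\mathrm N\jump{\gamma\bff e^h}-\jump{\partial_\nu e^h}$ and $\jump{\bff t(\bff e^h)}-\rho_f s\mathrm N^t\jump{\gamma e^h}$, which vanish by \eqref{eq:3.17c}--\eqref{eq:3.17d}; the remaining two pairings carry the factors $\bff t^+(\bff e^h)+\rho_f s\mathrm N^t\gamma^-e^h$ and $\partial_\nu^-e^h+s\mathrm N\gamma^+\bff e^h$ tested against $\jump{\gamma\bff w}\in\bff Y_h$ and $\jump{\gamma w}\in Y_h$, which vanish by \eqref{eq:3.17f} (the polar-set condition). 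Hence a solution of \eqref{eq:3.17} satisfies \eqref{eq:3.18}. For the converse, given a solution of \eqref{eq:3.18}, testing \eqref{eq:3.18b} first against pairs with compact support in $\mathbb R^d\setminus\Gamma$ recovers \eqref{eq:3.17a}--\eqref{eq:3.17b}; then \eqref{eq:3.8} reduces the variational identity to the vanishing of
\[
\langle \bff t^+(\bff e^h)+\rho_f s\mathrm N^t\gamma^-e^h,\jump{\gamma\bff w}\rangle_\Gamma
+\rho_f\langle \partial_\nu^-e^h+s\mathrm N\gamma^+\bff e^h,\jump{\gamma w}\rangle_\Gamma
-\rho_f\langle s\mathrm N\jump{\gamma\bff e^h}-\jump{\partial_\nu e^h},\gamma^+w\rangle_\Gamma
+\langle \jump{\bff t(\bff e^h)}-\rho_f s\mathrm N^t\jump{\gamma e^h},\gamma^-\bff w\rangle_\Gamma=0
\]
for all $(\bff w,w)\in\mathbb H_h$, and the surjectivity of the trace map $(\bff w,w)\mapsto(\gamma^+w,\gamma^-\bff w,\jump{\gamma\bff w},\jump{\gamma w})$ onto $H^{1/2}(\Gamma)\times\bff H^{1/2}(\Gamma)\times\bff Y_h\times Y_h$ — already used in the proof of Proposition \ref{prop:3.2} — lets us isolate each of the four terms, yielding \eqref{eq:3.17c}, \eqref{eq:3.17d}, and \eqref{eq:3.17f}; the condition \eqref{eq:3.17e} is \eqref{eq:3.18a} itself.

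I do not expect any genuine obstacle here: the only point requiring a word of care is that \eqref{eq:3.8} was stated for the Galerkin problem, so I would remark explicitly that it is a purely algebraic identity valid for \emph{any} admissible pair and test function, independent of which data appear on the right; everything else is a direct transcription of the proof of Proposition \ref{prop:3.2} with the data terms $\lambda_0$, $\phi_0$ set to zero and the essential condition shifted. For this reason I would keep the write-up to a sentence or two, pointing the reader to the proof of Proposition \ref{prop:3.2}.
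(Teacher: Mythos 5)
Your argument is correct and is exactly the route the paper intends: the paper omits the proof, stating only that it is ``quite similar'' to that of Proposition \ref{prop:3.2}, and your write-up is precisely that adaptation --- reusing the algebraic identity \eqref{eq:3.8} (correctly observing it is data-independent), killing the volumetric and jump terms via \eqref{eq:3.17a}--\eqref{eq:3.17d} and the polar-set condition \eqref{eq:3.17f}, and running the same surjectivity argument for the converse. No gaps.
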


We note that, in comparison with \eqref{eq:3.7}, problem \eqref{eq:3.18} has homogeneous right-hand side but incorporates a side restriction
\eqref{eq:3.18a}. This compares with how the conditions \eqref{eq:3.4c}-\eqref{eq:3.4d} have become homogeneous in \eqref{eq:3.17c}-\eqref{eq:3.17d}, while the homogeneous condition \eqref{eq:3.4e} is now non-homogeneous \eqref{eq:3.17e}.

\begin{proposition}\label{prop:3.7}
Problem \eqref{eq:3.18} is uniquely solvable for any $(\bs\phi_\Sigma,\phi_f)\in \mathbb H^{1/2}(\Gamma)$ and $s\in \mathbb C_+$. Moreover, there exists $C>0$ independent of $h$ such that
\begin{equation}\label{eq:3.19}
\triple{(\bff e^h,e^h)}_{|s|}
\le C \frac{|s|^2}{\sigma\underline\sigma}\|(\bs\phi_\Sigma,\phi_f)\|_{1/2,\Gamma}.
\end{equation}
\end{proposition}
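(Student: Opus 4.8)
The statement is the third, well-posedness step of the scheme already carried out for \eqref{eq:2.4} in Propositions \ref{prop:3.1}--\ref{prop:3.3}; its first two steps here are Propositions \ref{prop:3.5} and \ref{prop:3.6}. So the plan is only to establish unique solvability of the variational problem \eqref{eq:3.18} together with the bound \eqref{eq:3.19}, following the proof of Proposition \ref{prop:3.3} as closely as possible. The one new feature is that the data now enters through the essential side constraint \eqref{eq:3.18a} rather than through the load. I would remove that constraint by a lifting: choose any $(\bff u_\star,v_\star)\in\bff H^1(\mathbb R^d\setminus\Gamma)\times H^1(\mathbb R^d\setminus\Gamma)$ with $\jump{\gamma\bff u_\star}=-\bs\phi_\Sigma$, $\jump{\gamma v_\star}=\phi_f$ and $\|(\bff u_\star,v_\star)\|_{1,\mathbb R^d\setminus\Gamma}\le C\,\|(\bs\phi_\Sigma,\phi_f)\|_{1/2,\Gamma}$, with $C$ independent of $s$ and $h$; this is possible because $(\bff u,v)\mapsto(\jump{\gamma\bff u},\jump{\gamma v})$ is bounded and surjective onto $\mathbb H^{1/2}(\Gamma)$, and an $s$-independent right inverse suffices. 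Setting $(\bff f^h,f^h):=(\bff e^h,e^h)-(\bff u_\star,v_\star)$, problem \eqref{eq:3.18} becomes: find $(\bff f^h,f^h)\in\mathbb H_h$ with $B((\bff f^h,f^h),(\bff w,w);s)=-B((\bff u_\star,v_\star),(\bff w,w);s)$ for all $(\bff w,w)\in\mathbb H_h$, which is exactly of the type \eqref{eq:3.7}. Unique solvability then follows verbatim from \eqref{eq:3.11} and the Lax--Milgram lemma as in Proposition \ref{prop:3.3}, and reversing the substitution transfers it to \eqref{eq:3.18}.

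For the quantitative bound \eqref{eq:3.19} I would \emph{not} imitate the estimate of Proposition \ref{prop:3.3} literally: the new load $-B((\bff u_\star,v_\star),\punto;s)$ carries the mass contributions $\rho_\Sigma s^2(\bff u_\star,\punto)$ and $\rho_f(s/c)^2(v_\star,\punto)$, and bounding it in the unweighted norm $\|\punto\|_{1,\mathbb R^d\setminus\Gamma}$ and then invoking \eqref{eq:3.9} would cost an extra power of $|s|$ and only give $|s|^3/(\sigma\underline\sigma)$. Instead I would test the reduced equation with $(\bff w,w)=(\overline{\bff f^h},\overline{f^h})$ and apply \eqref{eq:3.11} directly:
\[
\sigma\,\triple{(\bff f^h,f^h)}_{|s|}^2=-\mathrm{Re}\big(\overline s\,B((\bff u_\star,v_\star),(\overline{\bff f^h},\overline{f^h});s)\big)\le|s|\,\big|B((\bff u_\star,v_\star),(\overline{\bff f^h},\overline{f^h});s)\big|.
\]

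Then I would estimate the six terms of $B$ keeping the $|s|$-weighted norm on the $(\bff f^h,f^h)$ slot. The stiffness terms are bounded by $C\|(\bs\phi_\Sigma,\phi_f)\|_{1/2,\Gamma}\,\triple{(\bff f^h,f^h)}_{|s|}$; each mass term is rewritten as $\rho_\Sigma|s|^2(\bff u_\star,\overline{\bff f^h})=\rho_\Sigma|s|\,(\bff u_\star,\overline{s\bff f^h})$ and thus bounded by $C|s|\,\|(\bs\phi_\Sigma,\phi_f)\|_{1/2,\Gamma}\,\triple{(\bff f^h,f^h)}_{|s|}$ — only one factor $|s|$ survives, the other being absorbed by the energy norm; and the four boundary terms are bounded by $C|s|\,\|(\bs\phi_\Sigma,\phi_f)\|_{1/2,\Gamma}\,(\|\gamma^{\pm}\bff f^h\|_{1/2,\Gamma}+\|\gamma^{\pm}f^h\|_{1/2,\Gamma})$, using that the traces of $\bff u_\star,v_\star$ are controlled $s$-independently and the bound $\|\gamma^{\pm}\bff f^h\|_{1/2,\Gamma}+\|\gamma^{\pm}f^h\|_{1/2,\Gamma}\le C\underline\sigma^{-1}\triple{(\bff f^h,f^h)}_{|s|}$, which in turn comes from the definition of $\triple{\punto}_{|s|}$, Korn's inequality, and $\underline\sigma\le\sigma\le|s|$. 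Collecting, $\big|B((\bff u_\star,v_\star),(\overline{\bff f^h},\overline{f^h});s)\big|\le C\frac{|s|}{\underline\sigma}\|(\bs\phi_\Sigma,\phi_f)\|_{1/2,\Gamma}\,\triple{(\bff f^h,f^h)}_{|s|}$, whence $\triple{(\bff f^h,f^h)}_{|s|}\le C\frac{|s|^2}{\sigma\underline\sigma}\|(\bs\phi_\Sigma,\phi_f)\|_{1/2,\Gamma}$. I would then close with the triangle inequality $\triple{(\bff e^h,e^h)}_{|s|}\le\triple{(\bff f^h,f^h)}_{|s|}+\triple{(\bff u_\star,v_\star)}_{|s|}$, where the second summand is at most $(|s|/\underline\sigma)\|(\bff u_\star,v_\star)\|_{1,\mathbb R^d\setminus\Gamma}\le C(|s|/\underline\sigma)\|(\bs\phi_\Sigma,\phi_f)\|_{1/2,\Gamma}\le C\frac{|s|^2}{\sigma\underline\sigma}\|(\bs\phi_\Sigma,\phi_f)\|_{1/2,\Gamma}$ by \eqref{eq:3.9}, $\underline\sigma\le1$ and $\sigma\le|s|$; this yields \eqref{eq:3.19}.

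The main obstacle is not analytic but a matter of $s$-explicit bookkeeping. The lifting can be taken completely elementary (any $s$-independent bounded right inverse of the jump trace), so no $s$-dependent lifting lemma is needed; what one must notice is that the mass terms in the new load force the test slot to be measured in the energy norm $\triple{\punto}_{|s|}$ rather than in $\|\punto\|_{1,\mathbb R^d\setminus\Gamma}$, otherwise one picks up a spurious factor $|s|$. Once that is recognized, the trace estimate for $(\bff f^h,f^h)$ and the juggling of the $|s|$, $\sigma$, $\underline\sigma$ powers are routine, and the rest of the argument (Propositions \ref{prop:3.5}--\ref{prop:3.6}) is just the obvious perturbation of Propositions \ref{prop:3.1}--\ref{prop:3.2}, as the authors announce.
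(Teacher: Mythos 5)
Your proof is correct and follows essentially the same route as the paper: an $s$-independent lifting of the jump data, the coercivity identity \eqref{eq:3.11} applied to the lifted/shifted unknown in $\mathbb H_h$, the asymmetric continuity bound (your term-by-term estimate is exactly \eqref{eq:3.20}), and a closing triangle inequality via \eqref{eq:3.9}. Subtracting the lifting and testing the reduced equation with the conjugate solution is just a cosmetic rearrangement of the paper's argument, which adds the lifting to the error and invokes the Galerkin orthogonality \eqref{eq:3.18b} instead.
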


\begin{proof}
Using the definition of the bilinear form $B$ (see Proposition \ref{prop:3.2}) and \eqref{eq:3.9}, we can easily bound
\begin{align}
\nonumber
|B((\bff u,v),(\bff w,w);s)| 
	\le \, & 
	\triple{(\bff u,v)}_{|s|}\triple{(\bff w,w)}_{|s|}
	+C |s| \, \|(\bff u,v)\|_{1,\mathbb R^d\setminus\Gamma}\|(\bff w,w)\|_{1,\mathbb R^d\setminus\Gamma}\\
\label{eq:3.20}
	\le\, & C \frac{|s|}{\underline\sigma} 
		\|(\bff u,v)\|_{1,\mathbb R^d\setminus\Gamma}\triple{(\bff w,w)}_{|s|}.
\end{align}
Take now $(\bff w,w)\in \bff H^1(\mathbb R^d\setminus\Gamma)\times H^1(\mathbb R^d\setminus\Gamma)$  such that
\begin{equation}\label{eq:3.21}
\jump{\gamma\bff w}=\bs\phi_\Sigma,\quad
\jump{\gamma w}=-\phi_f,\quad
\|(\bff w,w)\|_{1,\mathbb R^d\setminus\Gamma}\le C \|(\bs\phi_\Sigma,\phi_f)\|_{1/2,\Gamma}.
\end{equation}
By the energy identity \eqref{eq:3.11},  the fact that $(\bff e^h+\bff w,e^h+w)\in \mathbb H_h$, and \eqref{eq:3.20}, it follows that
\begin{eqnarray*}
\triple{(\bff e^h+\bff w,e^h+w)}^2_{|s|}
	&\le & \frac{|s|}{\sigma} |B((\bff e^h+\bff w,e^h+w),(\bff e^h+\bff w,e^h+w);s)|\\
	&= & \frac{|s|}{\sigma} |B((\bff w,w),(\bff e^h+\bff w,e^h+w);s)|\\
	&\le & C \frac{|s|^2}{\sigma\underline\sigma} \|(\bff w,w)\|_{1,\mathbb R^d\setminus\Gamma}
			\triple{(\bff e^h+\bff w,e^h+w)}_{|s|}.
\end{eqnarray*}
Therefore, using \eqref{eq:3.9}
\[
\triple{(\bff e^h,e^h)}_{|s|} \le C
 \frac{|s|^2}{\sigma\underline\sigma} \|(\bff w,w)\|_{1,\mathbb R^d\setminus\Gamma},
\]
and the result follows from \eqref{eq:3.21}. For readers who are acquainted with this kind of Laplace-domain estimates, let us clarify that the use of the optimal $|s|$-dependent lifting of Bamberger-HaDuong \cite[Lemma 1]{BaHa:1986a}  (see also \cite[Proposition 2.5.1]{Sayas:2014})  instead of the plain lifting used in \eqref{eq:3.21} does not improve the estimate. This is principally due to the $s$ factor in the boundary terms of the bilinear form $B$.
\end{proof}

\begin{corollary}\label{cor:3.8}
Let $(\bs\phi_\Sigma,\phi_f)$ and $(\bs\phi^h_\Sigma,\phi^h_f)$ be the respective solutions of \eqref{eq:2.4} and \eqref{eq:3.1}. Let then $(\bff u,v)$ and $(\bff u^h,v^h)$ be defined through \eqref{eq:2.5} and \eqref{eq:3.3} respectively. Then there exists $C>0$ independent of $h$ such that
\begin{eqnarray*}
\| (\bff u^h-\bff u,v^h-v)\|_{1,\mathbb R^d\setminus\Gamma}
	&\le & C \frac{|s|^2}{\sigma\underline\sigma^2} \| (\bs\phi_\Sigma,\phi_f)\|_{1/2,\Gamma},\\
\| (\bs\phi^h_\Sigma-\bs\phi_\Sigma,\phi^h_f-\phi_f)\|_{1/2,\Gamma}
	&\le & C \frac{|s|^2}{\sigma\underline\sigma^2} \| (\bs\phi_\Sigma,\phi_f)\|_{1/2,\Gamma}.
\end{eqnarray*}
\end{corollary}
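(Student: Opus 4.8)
The plan is to prove Corollary \ref{cor:3.8} by essentially invoking Propositions \ref{prop:3.5}, \ref{prop:3.6}, and \ref{prop:3.7} in sequence, exactly as Corollary \ref{cor:3.4} was deduced from Propositions \ref{prop:3.1}, \ref{prop:3.2}, and \ref{prop:3.3}. First I would note that, by Proposition \ref{prop:3.5}, the pair $(\bff e^h, e^h) := (\bff u^h - \bff u, v^h - v)$ solves the error transmission problem \eqref{eq:3.17}, and by Proposition \ref{prop:3.6} this is equivalent to the variational problem \eqref{eq:3.18} with data $(\bs\phi_\Sigma, \phi_f)$ being the exact boundary solution of \eqref{eq:2.4}. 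Proposition \ref{prop:3.7} then supplies the energy estimate \eqref{eq:3.19}, namely $\triple{(\bff e^h, e^h)}_{|s|} \le C \frac{|s|^2}{\sigma \underline\sigma} \|(\bs\phi_\Sigma, \phi_f)\|_{1/2,\Gamma}$.

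Next I would convert the energy-norm bound into the $\|\cdot\|_{1,\mathbb R^d\setminus\Gamma}$ bound using the left inequality in \eqref{eq:3.9}, which gives $\underline\sigma \|(\bff e^h, e^h)\|_{1,\mathbb R^d\setminus\Gamma} \le \triple{(\bff e^h, e^h)}_{|s|}$; dividing by $\underline\sigma$ picks up the extra power of $\underline\sigma$ in the denominator and yields the first claimed estimate
\[
\| (\bff u^h - \bff u, v^h - v)\|_{1,\mathbb R^d\setminus\Gamma} \le C \frac{|s|^2}{\sigma \underline\sigma^2} \| (\bs\phi_\Sigma, \phi_f)\|_{1/2,\Gamma}.
\]
For the boundary-unknown estimate I would use \eqref{eq:3.15}, which identifies $(\bs\phi^h_\Sigma - \bs\phi_\Sigma, \phi^h_f - \phi_f) = (\jump{\gamma \bff e^h}, -\jump{\gamma e^h})$, together with the boundedness of the jump trace operators $\jump{\gamma \punto} : \bff H^1(\mathbb R^d\setminus\Gamma) \to \bff H^{1/2}(\Gamma)$ and $\jump{\gamma \punto} : H^1(\mathbb R^d\setminus\Gamma) \to H^{1/2}(\Gamma)$ (with $h$-independent constants). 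This bounds $\|(\bs\phi^h_\Sigma - \bs\phi_\Sigma, \phi^h_f - \phi_f)\|_{1/2,\Gamma}$ by $C \|(\bff e^h, e^h)\|_{1,\mathbb R^d\setminus\Gamma}$, and the second estimate follows from the first.

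The proof is almost entirely bookkeeping, so there is no genuine obstacle; the only point requiring a moment of care is confirming that the constant $C$ truly remains independent of $h$ throughout — this hinges on the lifting in \eqref{eq:3.21} being $h$-independent (as asserted in the proof of Proposition \ref{prop:3.7}) and on the trace-operator norms being $h$-independent, both of which hold because none of these objects depends on the choice of $\bff Y_h, Y_h$. I would therefore keep the written proof to two or three sentences, citing Propositions \ref{prop:3.5}--\ref{prop:3.7} and relations \eqref{eq:3.9} and \eqref{eq:3.15} in the same telegraphic style used for Corollary \ref{cor:3.4}.
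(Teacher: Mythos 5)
Your proposal is correct and follows exactly the route the paper intends: its proof of Corollary \ref{cor:3.8} is the one-line statement that the result is a direct consequence of Propositions \ref{prop:3.5}, \ref{prop:3.6}, and \ref{prop:3.7}, and the details you supply (the conversion via \eqref{eq:3.9}, the identification \eqref{eq:3.15}, and the $h$-independent boundedness of the trace jumps) are precisely the bookkeeping being left implicit. No gaps.
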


\begin{proof}
The result is a direct consequence of Propositions \ref{prop:3.5}, \ref{prop:3.6}, and \ref{prop:3.7}.
\end{proof}
Using the results obtained in the previous two subsections it is possible to establish error estimates in the time-domain. Data will be taken in the Sobolev spaces
\[
W^k_+(H^{\pm1/2}(\Gamma)):=\{ \xi\in \mathcal C^{k-1}(\mathbb R;H^{\pm1/2}(\Gamma))\,:\, \xi \equiv 0 \mbox{ in $(-\infty,0)$}, \xi^{(k)}\in L^1(\mathbb R;H^{\pm1/2}(\Gamma))\},
\]
for $k\ge 1$.
A straightforward application of the inversion theorem of the Laplace transform \cite[Theorem 7.1]{DoSa:2013} (see also \cite[Proposition 3.2.2]{Sayas:2014}) starting with the bounds of Corollary \ref{cor:3.4} yields the following:
\\
\begin{corollary}\label{cor:3.9}
If the data of the problem satisfy $\lambda_0\in W^3_+(H^{-1/2}(\Gamma))$, $\phi_0\in W^4_+(H^{1/2}(\Gamma))$, then
 $(\boldsymbol\phi_{\Sigma},\phi_{f})$ and $(\mathbf{u}^h,v^h)$ are continuous causal functions of time and  for all $t\ge 0$
\[
\|(\boldsymbol\phi_{\Sigma},\phi_{f})(t)\|_{1/2,\Gamma}
\leq \frac{D_1t^2}{t+1}\max\{1,t^2\}\int_0^t\|\mathcal{P}_{3}(\lambda_0,\dot\phi_0)(\tau)\|_{-1/2,1/2, \Gamma}\;d\tau, 
\]
\[
\|(\mathbf{u}^h,v^h)(t)\|_{1,\mathbb{R}^d\setminus\Gamma} 
\leq \frac{D_2t^2}{t+1}\max\{1,t^2\}\int_0^t\|\mathcal{P}_{3}(\lambda_0,\dot\phi_0)(\tau)\|_{-1/2,1/2,\Gamma}\;d\tau, 
\]
where $D_1$ and $D_2$ depend only on $\Gamma$ and 
\[
(\mathcal{P}_{k}f)(t) := \displaystyle\sum_{l=0}^{k} {k\choose l} f^{(l)}(t).
\]
\end{corollary}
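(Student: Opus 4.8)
The plan is to obtain Corollary \ref{cor:3.9} as a time-domain translation of the Laplace-domain bounds in Corollary \ref{cor:3.4}, using the inversion theorem quoted from \cite[Theorem 7.1]{DoSa:2013}. That theorem takes an operator-valued analytic function $F(s)$ defined on $\mathbb C_+$ satisfying a bound of the form $\|F(s)\| \le C_F(\sigma)\,|s|^\mu$ with a non-increasing rational-in-$1/\sigma$ function $C_F$, and produces a causal function of time whose pointwise norm is controlled by an integral of (a finite combination of) derivatives of the data, with an explicit algebraic weight in $t$ coming from $\mu$ and from the number of negative powers of $\sigma$ in $C_F$. So the first step is to cast the solution maps as such transfer functions. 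For the boundary unknowns this is the map $(\lambda_0,\phi_0)\mapsto(\bs\phi_\Sigma^h,\phi_f^h)$; for the fields it is $(\lambda_0,\phi_0)\mapsto(\bff u^h,v^h)$. Analyticity in $s$ is inherited from the analyticity of the layer potentials and operators in $\mathbb C_+$ and the fact, established in Proposition \ref{prop:3.3}, that the variational problem is uniquely solvable for every $s\in\mathbb C_+$ by Lax--Milgram with an $s$-dependent but nonvanishing coercivity constant.

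Next I would read off the symbol. Corollary \ref{cor:3.4} gives, for both quantities of interest,
\[
\| \punto \| \le C\,\frac{|s|}{\underline\sigma^2\,\sigma}\,\|(\lambda_0,s\phi_0)\|_{-1/2,1/2,\Gamma}.
\]
The appearance of $s\phi_0$ rather than $\phi_0$ means the natural data pair, from the point of view of a single transfer function, is $(\lambda_0,\dot\phi_0)$ in the time domain, whose Laplace transform is $(\lambda_0,s\phi_0)$ up to the usual causal-distribution bookkeeping; this is exactly why $\dot\phi_0$ appears inside $\mathcal P_3$ in the statement. On $\mathbb C_+$ we have $\underline\sigma=\min\{\sigma,1\}$, so $1/\underline\sigma^2 \le (1+1/\sigma)^2$ and $|s|/(\underline\sigma^2\sigma)$ is bounded by a constant times $|s|\,(1+\sigma^{-1})^3$; thus the symbol has $\mu=1$ and three negative powers of $\sigma$. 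Feeding $\mu=1$ and three $\sigma^{-1}$ factors into the inversion theorem produces: a requirement that the data be differentiated $\mu+2 = 3$ times (hence $W^3_+$ for $\lambda_0$, and $W^4_+$ for $\phi_0$ because one derivative is already spent turning $\phi_0$ into $\dot\phi_0$); the operator $\mathcal P_3$ applied to $(\lambda_0,\dot\phi_0)$ under the integral sign; a factor $t^2/(t+1)$ coming from the combination of $\mu=1$ with the $\sigma^{-1}$'s in the standard way these estimates are packaged (cf. \cite[Proposition 3.2.2]{Sayas:2014}); and an extra $\max\{1,t^2\}$ absorbing the difference between $1/\underline\sigma$ and $1/\sigma$, i.e. the low-frequency behavior. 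The constants $D_1,D_2$ collect $C$ from Corollary \ref{cor:3.4} together with the numerical constants from the inversion theorem and depend only on $\Gamma$ since the $C$ there is independent of $h$ and of the data; this also yields continuity and causality of the time-domain solutions as part of the conclusion of \cite[Theorem 7.1]{DoSa:2013}.

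I expect the only real subtlety to be the precise bookkeeping of the powers of $t$ and the exact form of the differential operator $\mathcal P_k$: one must match the number of negative powers of $\sigma$ and the polynomial growth exponent $\mu$ in the bound against the hypotheses of the inversion theorem so that the stated weight $t^2/(t+1)\max\{1,t^2\}$ and the operator $\mathcal P_3$ come out, and one must be careful that the factor $s$ multiplying $\phi_0$ is accounted for by shifting one derivative onto $\phi_0$ (explaining the mismatch $W^3_+$ vs.\ $W^4_+$) rather than by weakening the estimate. Everything else is a direct substitution: analyticity and the uniform-in-$h$ bound are already in hand from Corollary \ref{cor:3.4}, so the proof is essentially the sentence ``apply \cite[Theorem 7.1]{DoSa:2013} to the bounds of Corollary \ref{cor:3.4}.'' I would write it at that level of detail, citing \cite[Proposition 3.2.2]{Sayas:2014} for the packaging of the $t$-weights and noting that $D_1,D_2$ inherit $h$-independence from the constant in Corollary \ref{cor:3.4}.
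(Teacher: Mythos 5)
Your proposal is correct and is exactly the paper's argument: the authors prove Corollary \ref{cor:3.9} by the single sentence ``a straightforward application of the inversion theorem of the Laplace transform \cite[Theorem 7.1]{DoSa:2013} (see also \cite[Proposition 3.2.2]{Sayas:2014}) starting with the bounds of Corollary \ref{cor:3.4},'' which is precisely your plan, including the reattribution of the factor $s$ multiplying $\phi_0$ as one extra time derivative on the data. Your bookkeeping of $\mu=1$, the three negative powers of $\sigma$, and the resulting weight $\frac{t^2}{t+1}\max\{1,t^2\}$ with the operator $\mathcal P_3$ matches the statement, so nothing further is needed.
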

In a similar fashion, a combined application of \cite[Theorem 7.1]{DoSa:2013}  and Corollary \ref{cor:3.8}, provides the following estimate for the errors of semidiscretization in time. Note that we are allowed to insert the best approximation operators in the right-hand side of the bound of Corollary \ref{cor:3.10} because the error produced by trying to compute the exact solution and the difference of the exact solution with its best approximation is the same.

\begin{corollary}\label{cor:3.10}
If the exact solution of \eqref{eq:2.4} satisfies $\boldsymbol\phi_\Sigma\in W^4_+(\mathbf H^{1/2}(\Gamma))$ and $\phi_f\in W^4_+(H^{1/2}(\Gamma))$, then $(\mathbf{e}^h,e^h):=(\mathbf{u}-\mathbf{u}^h,v-v^h)\in\mathcal{C}(\mathbb{R},\mathbf{H}^1(\mathbb{R}^d\setminus\Gamma)\times H^1(\mathbb{R}^d\setminus\Gamma))$  and for all $t\ge 0$ we have the bound
\[
\|(\mathbf{e}^h,e^h)(t)\|_{1,\mathbb{R}^d\setminus\Gamma} \leq \frac{Dt^2}{t+1}\max\{1,t^2\}\int_0^t\|\mathcal{P}_{4}(\boldsymbol\phi_{\Sigma}-\boldsymbol\Pi_h\boldsymbol\phi_{\Sigma},\phi_f -\Pi_h\phi_f)(\tau)\|_{1/2,\Gamma}\;d\tau,
\]
where $\boldsymbol\Pi_h$ and $\Pi_h$ are the best approximation operators in $\mathbf Y_h$ and $Y_h$, and $D$ depends only on $\Gamma$.
\end{corollary}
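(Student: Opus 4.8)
The plan is to obtain Corollary~\ref{cor:3.10} from the Laplace-domain bound of Corollary~\ref{cor:3.8} in precisely the way Corollary~\ref{cor:3.9} was obtained from Corollary~\ref{cor:3.4}, the one new ingredient being the insertion of the best-approximation operators into the right-hand side \emph{before} inverting the Laplace transform. Everything is done first at a fixed $s\in\mathbb C_+$, and only at the very end do we invoke the inversion theorem \cite[Theorem 7.1]{DoSa:2013}.

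First I would sharpen Corollary~\ref{cor:3.8}. The key observation — already announced in the paragraph preceding the statement — is that the error $(\mathbf e^h,e^h)$ depends on the exact traces $(\boldsymbol\phi_\Sigma,\phi_f)$ only modulo $\mathbf Y_h\times Y_h$. Indeed, in the variational characterization of Proposition~\ref{prop:3.6} the data enter only through the side constraint \eqref{eq:3.18a}; since $\mathbf Y_h\times Y_h$ is a \emph{linear} subspace, replacing $(\boldsymbol\phi_\Sigma,\phi_f)$ by $(\boldsymbol\phi_\Sigma-\boldsymbol\chi,\phi_f-\chi)$ with $(\boldsymbol\chi,\chi)\in\mathbf Y_h\times Y_h$ leaves both \eqref{eq:3.18a} and \eqref{eq:3.18b} unchanged, hence leaves the (unique, by Proposition~\ref{prop:3.7}) solution $(\mathbf e^h,e^h)$ unchanged. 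Applying the bound of Proposition~\ref{prop:3.7} to the shifted data and then \eqref{eq:3.9} exactly as in the proof of Corollary~\ref{cor:3.8}, one gets, for every $(\boldsymbol\chi,\chi)\in\mathbf Y_h\times Y_h$,
\[
\|(\mathbf e^h,e^h)\|_{1,\mathbb R^d\setminus\Gamma}\le C\,\frac{|s|^2}{\sigma\underline\sigma^2}\,\|(\boldsymbol\phi_\Sigma-\boldsymbol\chi,\phi_f-\chi)\|_{1/2,\Gamma};
\]
because the product $H^{1/2}$-norm decouples, the infimum over the subspace is attained at $(\boldsymbol\chi,\chi)=(\boldsymbol\Pi_h\boldsymbol\phi_\Sigma,\Pi_h\phi_f)$, which produces the best-approximation error on the right-hand side.

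Next I would pass to the time domain. The operators $\boldsymbol\Pi_h,\Pi_h$ are bounded, linear, and independent of $s$, so $\boldsymbol\Pi_h\widehat{\boldsymbol\phi}_\Sigma(s)$ and $\Pi_h\widehat\phi_f(s)$ are the Laplace transforms of $\boldsymbol\Pi_h\boldsymbol\phi_\Sigma$ and $\Pi_h\phi_f$; consequently the ``effective data'' $(\boldsymbol\phi_\Sigma-\boldsymbol\Pi_h\boldsymbol\phi_\Sigma,\phi_f-\Pi_h\phi_f)$ inherits the $W^4_+$ regularity of $(\boldsymbol\phi_\Sigma,\phi_f)$. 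The solution operator mapping this data to $(\mathbf e^h,e^h)$ is analytic on $\mathbb C_+$ — it is assembled from the elastic and acoustic layer potentials, which depend analytically on $s$ (see \cite{Sayas:2014}) — and, by the sharpened bound, has norm $\le C|s|^2\underline\sigma^{-3}$ into $\mathbf H^1(\mathbb R^d\setminus\Gamma)\times H^1(\mathbb R^d\setminus\Gamma)$. Feeding this into \cite[Theorem 7.1]{DoSa:2013} (equivalently \cite[Proposition 3.2.2]{Sayas:2014}) with $|s|$-exponent $\mu=2$ and $\underline\sigma^{-\ell}$ growth $\ell=3$: the theorem then demands $k=\mu+2=4$ time derivatives of the data — the origin of both the $W^4_+$ hypothesis and the operator $\mathcal P_4$ — and returns causality, continuity $(\mathbf e^h,e^h)\in\mathcal C(\mathbb R,\mathbf H^1(\mathbb R^d\setminus\Gamma)\times H^1(\mathbb R^d\setminus\Gamma))$, and the time weight $\frac{t^{\ell-1}}{t+1}\max\{1,t^{\ell-1}\}=\frac{t^2}{t+1}\max\{1,t^2\}$, with $D$ depending only on $\Gamma$, i.e.\ exactly the stated estimate.

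The only genuinely new step is the best-approximation substitution, justified by the elementary subspace-insensitivity of \eqref{eq:3.18a} described above together with the remark that it survives the Laplace transform since $\boldsymbol\Pi_h,\Pi_h$ are $s$-independent. I expect no real obstacle beyond this: verifying analyticity and carefully tracking the powers of $|s|$ and $\underline\sigma^{-1}$ so that the correct derivative count ($k=4$) and time weight emerge from the inversion theorem is routine bookkeeping, entirely parallel to Corollaries~\ref{cor:3.4} and~\ref{cor:3.9}.
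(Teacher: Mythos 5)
Your proposal is correct and follows essentially the same route as the paper: the paper's own justification is precisely the remark that the Galerkin error is unchanged when the exact densities are shifted by elements of $\mathbf Y_h\times Y_h$ (which you make precise via the side constraint \eqref{eq:3.18a} in Proposition \ref{prop:3.6}), followed by applying the Laplace inversion theorem \cite[Theorem 7.1]{DoSa:2013} to the bound of Corollary \ref{cor:3.8} with $|s|^{2}\sigma^{-1}\underline\sigma^{-2}$ growth, yielding $\mathcal P_4$, the $W^4_+$ hypotheses, and the stated time weight. Your fleshing out of the subspace-insensitivity argument and the $s$-independence of $\boldsymbol\Pi_h,\Pi_h$ is exactly what the paper leaves implicit.
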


\subsection{A fully discrete method}

\paragraph{Full discretization with BDF2-CQ.}
A fully discrete method can be obtained by using any of the many Convolution Quadrature schemes. The reader is referred to \cite{Lubich:1994, BaSc:2012,HaSa:2014} for the algorithmic description of multistep and multistage CQ schemes. We next give an estimate for the BDF2-based CQ method, based on the stability bound in the Laplace-domain obtained in Proposition \ref{prop:3.3} and \cite[Proposition 4.6.1]{Sayas:2014} (a slight refinement of one of the main convergence theorems in \cite{Lubich:1994}).

\begin{proposition}\label{prop:3.11}
Let $\ell=6$ and $(\lambda_0,\phi_0)$ be causal problem data such that
$\lambda_0\in W^\ell_+(H^{-1/2}(\Gamma))$ and $\phi_0\in W^{\ell+1}_+(H^{1/2}(\Gamma))$. 
Then
\[
\|(\mathbf u^h,v^h)(t)-(\mathbf u^h_\kappa,v^h_\kappa)(t)\|_{1,\mathbb R^d\setminus\Gamma} \leq D\kappa^2(1+t^2)\int_0^t\|(\lambda_0^{(\ell)},\phi_0^{(\ell+1)})(\tau)\|_{-1/2,1/2,\Gamma}\,d\tau.
\]
\end{proposition}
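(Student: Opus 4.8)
The plan is to invoke the abstract Convolution Quadrature convergence theory, using the Laplace-domain stability estimate already established in Proposition \ref{prop:3.3} as the transfer-function bound that feeds into that theory. Recall that the semidiscrete-in-space solution $(\mathbf u^h,v^h)$ is the inverse Laplace transform of the operator $s\mapsto(\mathbf u^h(s),v^h(s))$ which, by Proposition \ref{prop:3.3} together with \eqref{eq:3.9}, is analytic on $\mathbb C_+$ and satisfies a bound of the form $\|(\mathbf u^h(s),v^h(s))\|_{1,\mathbb R^d\setminus\Gamma}\le C\,|s|^{\mu}\underline\sigma^{-\nu}\sigma^{-1}\|(\lambda_0,s\phi_0)\|_{-1/2,1/2,\Gamma}$ for explicit small integers $\mu,\nu$ (from Corollary \ref{cor:3.4}, $\mu=1$, $\nu=2$ suffices). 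This is precisely the type of $s$-explicit polynomial-growth bound required by \cite[Proposition 4.6.1]{Sayas:2014}.

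First I would set up the CQ discretization: writing the semidiscrete problem as $(\mathbf u^h,v^h)=\mathbf F\ast g$ where $\mathbf F$ is the transfer operator and $g=(\lambda_0,\phi_0)$ is the data, the BDF2-CQ approximation $(\mathbf u^h_\kappa,v^h_\kappa)$ replaces the Laplace variable $s$ by $\delta(\zeta)/\kappa$ with $\delta$ the BDF2 symbol $\delta(\zeta)=\tfrac32-2\zeta+\tfrac12\zeta^2$. Second, I would verify the hypotheses of \cite[Proposition 4.6.1]{Sayas:2014}: (i) analyticity of $\mathbf F$ on $\mathbb C_+$ and the polynomial bound above, which come directly from Corollary \ref{cor:3.4}; and (ii) the regularity of the data, namely $\lambda_0\in W^\ell_+(H^{-1/2}(\Gamma))$ and $\phi_0\in W^{\ell+1}_+(H^{1/2}(\Gamma))$ — the extra derivative on $\phi_0$ matching the $s\phi_0$ appearing inside the norm on the right-hand side of \eqref{eq:3.10}. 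The value $\ell=6$ is dictated by the abstract theorem: second-order convergence of BDF2-CQ for a transfer function growing like $|s|^{\mu}$ with the $\underline\sigma^{-\nu}\sigma^{-1}$ behavior near the imaginary axis costs a fixed number of extra time derivatives on the data (the count $\mu+\nu+$ a universal constant lands on $6$), and I would simply cite that bookkeeping rather than redo it.

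Third, I would read off the conclusion: \cite[Proposition 4.6.1]{Sayas:2014} yields a bound of the form $\|(\mathbf u^h,v^h)(t)-(\mathbf u^h_\kappa,v^h_\kappa)(t)\|_{1,\mathbb R^d\setminus\Gamma}\le C\,\kappa^2\,(1+t)^{k}\int_0^t\|(\lambda_0^{(\ell)},\phi_0^{(\ell+1)})(\tau)\|_{-1/2,1/2,\Gamma}\,d\tau$ with a polynomial weight in $t$; absorbing constants and using $(1+t)^k\le C(1+t^2)$ for the relevant $k$ gives the stated estimate with a single constant $D$ depending only on $\Gamma$ (the dependence on $\rho_f,\rho_\Sigma,c$ being suppressed as elsewhere in the paper). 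It is worth noting, as the paper does for the semidiscrete estimates, that the uniformity in $h$ is automatic because the constant in Proposition \ref{prop:3.3} is independent of the choice of $\mathbf Y_h,Y_h$.

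The main obstacle — or rather the only real content beyond citation — is the careful matching of the $s$-powers: one must track how the $|s|/(\underline\sigma\,\sigma)$ factor in \eqref{eq:3.10}, after composition with $s=\delta(\zeta)/\kappa$, translates into the number of data derivatives $\ell$ and the power of $\kappa$ in the abstract CQ estimate, and check that the factor $s\phi_0$ (as opposed to $\phi_0$) inside the norm is exactly what forces $\phi_0\in W^{\ell+1}_+$ rather than $W^{\ell}_+$. This is a routine but delicate application of \cite[Proposition 4.6.1]{Sayas:2014}; no new analytical estimate is needed, since all the heavy lifting was done in Proposition \ref{prop:3.3} and Corollary \ref{cor:3.4}.
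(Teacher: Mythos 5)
Your proposal is correct and matches the paper's approach exactly: the paper provides no written proof for this proposition beyond citing \cite[Proposition 4.6.1]{Sayas:2014} applied to the Laplace-domain bound of Proposition \ref{prop:3.3}/Corollary \ref{cor:3.4}, which is precisely the route you take, including the correct identification of why $\phi_0$ needs one extra derivative (the $s\phi_0$ factor in \eqref{eq:3.10}) and where $\ell=6$ comes from.
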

It is important to note that the high-order regularity $\ell=6$ is only required to achieve optimal convergence of order $\kappa^2$. For problem data with regularity as low as $\ell=3$, reduced convergence of order $\kappa^{3/2}$ is achieved (see \cite{Sayas:2014}).

%
\section{General linear elastic materials: BEM-FEM}
%

Going back to the system of equations \eqref{eq:2.3}, an alternate approach aiming for a finite element solution of the elastic wavefield and a boundary element solution of the acoustic wavefield is to use a direct boundary integral representation of the acoustic wave while keeping the partial differential equation for the elastic displacement in variational form. This approach is particularly well suited for the case of variable elastic density and Lam\'e coefficients, and also for heterogeneous anisotropic materials. In the following we assume that the stress is given by a linear law $\boldsymbol \sigma=\mathbf C(\mathbf x)\,\boldsymbol\varepsilon$, where for each $\mathbf x\in \Omega_-$, $\mathbf C(\mathbf x)$ is a linear operator that transforms symmetric matrices into symmetric matrices and satisfies
$
\boldsymbol\varepsilon:\mathbf C(\mathbf x)\boldsymbol\varepsilon \ge C_0 \boldsymbol\varepsilon:\boldsymbol\varepsilon 
$
for some positive constant $C_0$, for every symmetric matrix $\boldsymbol\varepsilon$ and for almost every $\mathbf x\in \Omega_-$. We also assume that the components of the tensor $\mathbf C$ are $L^\infty(\Omega_-)$ functions and that the solid density $\rho_\Sigma\in L^\infty(\Omega_-)$ is strictly positive. The Navier-Lam\'e operator is now given by $\Delta^*\mathbf u=\nabla\cdot (\mathbf C \boldsymbol\varepsilon(\mathbf u))$ and the traction operator $\mathbf t$ is redefined accordingly as well.

The derivation employs standard arguments of boundary integral equations and is presented with careful detail in \cite{HsSaWe:2015}, with the resulting equivalent system being
\begin{subequations}\label{eq:4.1}
\begin{alignat}{6}
\label{eq:4.1a}
\rho_\Sigma s^2 \bff u -\Delta^* \bff u=&\,0  
		& \qquad & \mbox{in $\Omega_-$},\\
\label{eq:4.1b}
\bff t^-(\bff u)+\rho_f s \mathrm N^t\phi=&\,-\rho_f s \mathrm N^t\phi_0
		 & \qquad & \mbox{on $\Gamma$},\\
\label{eq:4.1c}
\mathrm{V}(s/c)\lambda + \left(\tfrac{1}{2}\mathrm{I}-\mathrm{K}(s/c)\right)\phi =&\, 0
		& \qquad & \mbox{on $\Gamma$},\\
\label{eq:4.1d}
\left(-\tfrac{1}{2}\mathrm{I}+\mathrm{K}^{t}(s/c)\right)\lambda + \mathrm{W}(s/c)\phi -s\mathrm{N}\gamma\mathbf{u} =&\, \lambda_0 
		 & \qquad & \mbox{on $\Gamma$}.
\end{alignat}
\end{subequations}
For notational convenience, we introduce the interior  elastodynamic bilinear form in the Laplace-domain
\[
a(\mathbf u,\mathbf w;s):=
	(\boldsymbol\sigma(\mathbf u),\boldsymbol\varepsilon(\mathbf w))_{\Omega_-}
	+s^2(\rho_f\mathbf u,\mathbf w)_{\Omega_-},
\]
so that the variational formulation of \eqref{eq:4.1a}-\eqref{eq:4.1b} reads
\[
a(\mathbf u,\mathbf w;s)+s\langle\rho_f (\phi+\phi_0),\gamma\mathbf w\cdot\boldsymbol\nu\rangle_\Gamma=0
\quad\forall \mathbf w\in \mathbf H^1(\Omega_-).
\]
We note that the operator $\mathrm N \gamma\mathbf w=\gamma \mathbf w\cdot\boldsymbol\nu$ appears in this weak formulation, while $\mathrm N^t$ will not be used any longer in this section. Since the language of this section is less heavy on the side of operators, we will keep the explicit form of the combined operator $\mathrm N\gamma$ as a trace operator dotted with the normal vector field.

%
\subsection{Galerkin semidiscretization in space}
%
Just as in Section \ref{sec:3.1}, the solvability and stablity of \eqref{eq:4.1} are studied simultaneously. In order to do so, we define the  closed subspaces 
\[
\mathbf{V}_h\subset \mathbf{H}^1(\Omega_-), \quad X_h\subset H^{-1/2}(\Gamma),\quad Y_h\subset H^{1/2}(\Gamma).
\]
The following result establishes the connection between the discrete counterpart of problem \eqref{eq:4.1} and a non-standard transmission problem. Note that the `Finite Element' form is a discretization of the interior Navier-Lam\'e equation, and therefore, the elastic operator has been discretized, as opposed to what happens with the `Boundary Element' counterpart, where only transmission conditions are discretized.

\begin{proposition}[Transmission problem for Galerkin equations]\label{prop:4.1}
If $(\mathbf{u}^h,\, \phi^h,\, \lambda^h)\,\in\,\mathbf{V}_h\times Y_h\times X_h$ satisfies the Galerkin equations
\begin{subequations}\label{eq:4.2}
\begin{alignat}{6}
\label{eq:4.2a}
a(\mathbf u^h,\mathbf w;s)+s\langle\rho_{f}(\phi^h+\phi_0),\gamma\mathbf{w}\cdot\boldsymbol\nu\rangle_{\Gamma} &=0
                             & \qquad & \forall \mathbf{w}\in\mathbf{V}_h,\\
\label{eq:4.2b}
-s\gamma\mathbf{u}^h\cdot\boldsymbol\nu + \mathrm{W}(s/c)\phi^h + \left(-\tfrac{1}{2}\mathrm{I}+\mathrm{K}^{t}(s/c)\right)\lambda^h - \lambda_0 & \,\in\, Y_h^{\circ},\\
\label{eq:4.2c}
\left(\tfrac{1}{2}\mathrm{I}-\mathrm{K}(s/c)\right)\phi^h + \mathrm{V}(s/c)\lambda^h &  \,\in\,X_h^{\circ},
\end{alignat}
\end{subequations}
and
\begin{equation}\label{eq:4.3}
v^h := \mathrm{D}(s/c)\phi^h - \mathrm{S}(s/c)\lambda^h,
\end{equation}
then the pair $(\mathbf{u}^h, v^h)\,\in\,\mathbf{V}_h\times H^1(\mathbb R^d\setminus\Gamma)$ satisfies the transmission problem
\begin{subequations}\label{eq:4.4}
\begin{alignat}{6}
\label{eq:4.4a}
a(\mathbf u^h,\mathbf w;s)+s\langle\rho_{f}(-\jump{\gamma v^h}+\phi_0),\gamma\mathbf{w}\cdot\boldsymbol\nu\rangle_{\Gamma} &=0
                             & \qquad & \forall \mathbf{w}\in\mathbf{V}_h,\\\label{eq:4.4b}
-\Delta v^h +(s/c)^2v^h =&\,0 & \qquad & \mbox{in\; $\mathbb{R}^d\setminus\Gamma$},\\
\label{eq:4.4c}
\jump{\gamma v^h} \,\in &\, Y_h,\\
\label{eq:4.4d}
\jump{\partial_{\nu}v^h} \,\in &\,X_h,\\
\label{eq:4.4e}
s\gamma\mathbf{u}^h \cdot\boldsymbol\nu+ 
	\partial_\nu^{+}v^h+\lambda_0 \,\in &\, Y_h^{\circ},\\
\label{eq:4.4f}
\gamma^-v^h \,\in &\, X_h^{\circ}.
\end{alignat}
\end{subequations}
Conversely, given a solution of \eqref{eq:4.4}, the triplet
\begin{equation}\label{eq:4.5}
(\mathbf{u}^h,\phi^h, \lambda^h) := (\mathbf{u}^h,-\jump{\gamma v^h}, -\jump{\partial_{\nu}v^h}) \,\in\, \mathbf{V}_h\times Y_h\times X_h 
\end{equation}
satisfies \eqref{eq:4.2}.
\end{proposition}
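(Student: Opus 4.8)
The plan is to mimic the structure of the proof of Proposition~\ref{prop:3.1}, since \eqref{eq:4.4} is again an ``exotic'' transmission problem obtained from the Galerkin equations by introducing a potential representation for the acoustic field. First I would use the definition \eqref{eq:4.3} of $v^h$ via the single and double layer potentials for the operator $\Delta - (s/c)^2$: this immediately gives \eqref{eq:4.4b}, since potentials are solutions of the homogeneous resolvent equation in $\mathbb R^d \setminus \Gamma$. Next, the jump relations for the acoustic layer potentials give $\jump{\gamma v^h} = -\phi^h$ and $\jump{\partial_\nu v^h} = -\lambda^h$; since $\phi^h \in Y_h$ and $\lambda^h \in X_h$ by hypothesis, this yields \eqref{eq:4.4c} and \eqref{eq:4.4d}, and also lets us rewrite the finite element equation \eqref{eq:4.2a} as \eqref{eq:4.4a} after substituting $\phi^h = -\jump{\gamma v^h}$.

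The remaining two conditions \eqref{eq:4.4e} and \eqref{eq:4.4f} are the ones carrying the polar-set information. Here I would use the averaged trace and normal derivative limit formulas for $\mathrm S(s/c)$ and $\mathrm D(s/c)$ together with the one-sided versions $\gamma^\pm = \ave{\gamma\,\cdot\,} \pm \tfrac12 \jump{\gamma\,\cdot\,}$ and $\partial_\nu^\pm = \ave{\partial_\nu\,\cdot\,} \pm \tfrac12 \jump{\partial_\nu\,\cdot\,}$. Computing $\gamma^- v^h$ in terms of the boundary integral operators $\mathrm V(s/c)$ and $\mathrm K(s/c)$, one finds
\[
\gamma^- v^h = \left(\tfrac12 \mathrm I - \mathrm K(s/c)\right)\phi^h + \mathrm V(s/c)\lambda^h,
\]
so that \eqref{eq:4.2c} is exactly the statement $\gamma^- v^h \in X_h^\circ$, i.e.\ \eqref{eq:4.4f}. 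Similarly, expressing $\partial_\nu^+ v^h$ through $\mathrm K^t(s/c)$ and $\mathrm W(s/c)$ gives
\[
\partial_\nu^+ v^h = \mathrm W(s/c)\phi^h + \left(-\tfrac12 \mathrm I + \mathrm K^t(s/c)\right)\lambda^h,
\]
and adding $s\gamma\mathbf u^h\cdot\boldsymbol\nu + \lambda_0$ matches the left-hand side of \eqref{eq:4.2b}, proving \eqref{eq:4.4e}. This is the step I expect to be the most delicate — not conceptually hard, but requiring care to track the signs in the double-layer convention $v = \mathrm D(s/c)\phi - \mathrm S(s/c)\lambda$ (note the sign differs from the $\mathrm S\eta - \mathrm D\varphi$ convention used earlier) and to get the $\pm\tfrac12\mathrm I$ terms right.

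For the converse, given a solution of \eqref{eq:4.4}, I would define $(\mathbf u^h, \phi^h, \lambda^h)$ by \eqref{eq:4.5}. Condition \eqref{eq:4.4b} together with the representation formula for solutions of the resolvent equation (and the facts that $\jump{\gamma v^h} = -\phi^h$, $\jump{\partial_\nu v^h} = -\lambda^h$) shows that $v^h$ coincides with the potential expression \eqref{eq:4.3}. Then \eqref{eq:4.4c}--\eqref{eq:4.4d} give $\phi^h \in Y_h$, $\lambda^h \in X_h$; \eqref{eq:4.4a} becomes \eqref{eq:4.2a} after resubstituting $\phi^h = -\jump{\gamma v^h}$; and the two identities for $\gamma^- v^h$ and $\partial_\nu^+ v^h$ derived above turn \eqref{eq:4.4f} and \eqref{eq:4.4e} into \eqref{eq:4.2c} and \eqref{eq:4.2b} respectively. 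All the ingredients are the standard jump and averaged-limit relations for the acoustic layer potentials on Lipschitz boundaries, exactly as invoked in the proof of Proposition~\ref{prop:3.1}, so beyond the bookkeeping there is no real obstacle.
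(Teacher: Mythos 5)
Your strategy is the same as the paper's: derive \eqref{eq:4.4b}--\eqref{eq:4.4d} and \eqref{eq:4.4a} from the jump relations of the layer potentials, and read \eqref{eq:4.4e}--\eqref{eq:4.4f} off \eqref{eq:4.2b}--\eqref{eq:4.2c} via the one-sided limit identities (the paper invokes exactly $\partial_\nu^-\mathrm S(s)=\tfrac12\mathrm I+\mathrm K^t(s)$ and $\gamma^-\mathrm D(s)=-\tfrac12\mathrm I+\mathrm K(s)$). However, both trace identities you wrote down are off by a global sign --- precisely the bookkeeping you flagged as delicate. With the paper's conventions ($\jump{\gamma v}=\gamma^- v-\gamma^+ v$, and $v=\mathrm S\eta-\mathrm D\varphi$ having jumps $\jump{\gamma v}=\varphi$, $\jump{\partial_\nu v}=\eta$, so that $\jump{\gamma\mathrm D\varphi}=-\varphi$ and $\gamma^\pm=\ave{\gamma\punto}\mp\tfrac12\jump{\gamma\punto}$, with $\mp$ rather than the $\pm$ you wrote), one finds for $v^h=\mathrm D(s/c)\phi^h-\mathrm S(s/c)\lambda^h$ that
\[
\gamma^- v^h=-\Bigl[\bigl(\tfrac12\mathrm I-\mathrm K(s/c)\bigr)\phi^h+\mathrm V(s/c)\lambda^h\Bigr],
\qquad
\partial_\nu^+ v^h=-\Bigl[\mathrm W(s/c)\phi^h+\bigl(-\tfrac12\mathrm I+\mathrm K^t(s/c)\bigr)\lambda^h\Bigr],
\]
i.e.\ the negatives of your expressions. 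For \eqref{eq:4.4f} the slip is harmless, since $X_h^\circ$ is a subspace and membership is insensitive to an overall sign. For \eqref{eq:4.4e}, though, your version actually breaks the claimed match: with your formula the quantity $s\gamma\mathbf u^h\cdot\boldsymbol\nu+\partial_\nu^+v^h+\lambda_0$ would agree with the left-hand side of \eqref{eq:4.2b} in the $\mathrm W$ and $\mathrm K^t$ terms but disagree in the sign of the $s\gamma\mathbf u^h\cdot\boldsymbol\nu$ and $\lambda_0$ terms, so it is not a scalar multiple of it and the equivalence of the two polar-set conditions would not follow. With the correct signs, the expression in \eqref{eq:4.4e} is exactly $-1$ times the left-hand side of \eqref{eq:4.2b}, and everything closes. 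Apart from this sign repair, your argument --- including the converse via the representation formula --- is correct and coincides with the paper's proof.
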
 
\begin{proof}
Equations \eqref{eq:4.4b}, \eqref{eq:4.4c}, and \eqref{eq:4.4d} are simple consequences of the definition of $v^h$ and the jump relations of the double and single layer potentials. Moreover, using the definition of $v^h$ and the well known identities
\[
\partial_{\nu}^-\mathrm S(s) = \tfrac{1}{2}\mathrm I +\mathrm{K}^t(s) \,,\qquad \gamma^-\mathrm D (s) = -\tfrac{1}{2}\mathrm I + \mathrm{K}(s),
\]
it is easy to verify that \eqref{eq:4.4e} and \eqref{eq:4.4f} are just restatements of \eqref{eq:4.2b} and \eqref{eq:4.2c}.

To prove the converse, note that \eqref{eq:4.4b} and the definition of $(\phi^h,\lambda^h)$ in \eqref{eq:4.5} imply the integral representation \eqref{eq:4.3}. Then \eqref{eq:4.2b} is equivalent to \eqref{eq:4.4e} and \eqref{eq:4.2c} is equivalent to \eqref{eq:4.4f}.
\end{proof}

\begin{proposition}[Equivalent variational formulation]\label{prop:4.2}
Let
\[
V_h := \{v\in H^1(\mathbb R^d\setminus\Gamma): \jump{\gamma v}\in Y_h\,,\,\gamma^-v\in X_h^{\circ}\}.
\]
The problem \eqref{eq:4.4} is equivalent to finding $(\mathbf{u}^h,v^h)\in \mathbf{V}_h\times V_h$ such that
\begin{equation}\label{eq:4.6}
\mathcal{A}\left( (\mathbf{u}^h,v^h),(\mathbf{w},w);s\right) = f\left((\mathbf{w},w);s\right) \quad \forall(\mathbf{w},w)\in \mathbf{V}_h\times V_h,
\end{equation}
where
\begin{align*}
\mathcal{A}\left((\mathbf{u},v),(\mathbf{w},w);s\right):=& \left(\boldsymbol\sigma(\mathbf{u}),\boldsymbol\varepsilon(\mathbf{w})\right)_{\Omega_-} + s^2\left(\rho_{\Sigma}\mathbf{u},\mathbf{w}\right)_{\Omega_-} \\
		& +\rho_f\left(\nabla v,\nabla w\right)_{\mathbb{R}^d\setminus\Gamma} + \rho_f(s/c)^2\left(v,w\right)_{\mathbb{R}^d\setminus\Gamma}\\
		& + \rho_fs\langle\gamma \mathbf{u}\cdot\boldsymbol\nu,\jump{\gamma w} \rangle_{\Gamma} - \rho_fs\langle\jump{\gamma v},\gamma \mathbf{w}\cdot\boldsymbol\nu \rangle_{\Gamma},
\end{align*}
and
\[
f\left((\mathbf{w},w);s\right):= - \rho_fs\langle\phi_0,\gamma \mathbf{w}\cdot\boldsymbol\nu \rangle_{\Gamma}- \rho_f\langle\lambda_0,\jump{\gamma w} \rangle_{\Gamma}.
\]
\end{proposition}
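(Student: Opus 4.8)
The plan is to mirror the proof of Proposition \ref{prop:3.2}: integrate by parts in the interior elastic equation and in the exterior/interior Helmholtz resolvent equation, keeping careful track of all boundary terms, and then recognize that the Galerkin/polar-set conditions in \eqref{eq:4.4} are exactly the statement that the leftover boundary functionals vanish when tested against $\mathbb{H}_h$-type test functions. First I would take $(\mathbf{u}^h,v^h)$ solving \eqref{eq:4.4} and, for arbitrary $(\mathbf{w},w)\in\mathbf{V}_h\times V_h$, compute $a(\mathbf{u}^h,\mathbf{w};s)$ and $\rho_f(\nabla v^h,\nabla w)_{\mathbb{R}^d\setminus\Gamma}+\rho_f(s/c)^2(v^h,w)_{\mathbb{R}^d\setminus\Gamma}$ using Betti's formula / Green's identity. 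For the elastic part, \eqref{eq:4.4a} already \emph{is} the variational identity, once one substitutes $\phi^h=-\jump{\gamma v^h}$; so the elastic contribution essentially matches the first, second, and the $\rho_f s\langle\gamma\mathbf{u}\cdot\boldsymbol\nu,\cdot\rangle$ and $-\rho_f s\langle\jump{\gamma v},\gamma\mathbf{w}\cdot\boldsymbol\nu\rangle$ terms of $\mathcal{A}$ together with the $-\rho_f s\langle\phi_0,\gamma\mathbf{w}\cdot\boldsymbol\nu\rangle$ term of $f$.

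The acoustic part is where the real bookkeeping lies. Splitting $\mathbb{R}^d\setminus\Gamma$ into $\Omega_-$ and $\Omega_+$, Green's identity gives $\rho_f(\nabla v^h,\nabla w)+\rho_f(s/c)^2(v^h,w)=\rho_f(\Delta v^h-(s/c)^2v^h\text{-terms})+\rho_f\langle\partial_\nu^- v^h,\gamma^-w\rangle_\Gamma-\rho_f\langle\partial_\nu^+ v^h,\gamma^+w\rangle_\Gamma$, and by \eqref{eq:4.4b} the volume term drops. I would then rewrite the two boundary duality products in terms of jumps and averages, or more directly: since $w\in V_h$ we have $\gamma^- w\in X_h^\circ$ and $\jump{\gamma w}\in Y_h$, so one wants to pair $\partial_\nu^- v^h$ with something that lies in $X_h$ and $\partial_\nu^+ v^h$ (or a jump combination) against something in $Y_h^\circ$. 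Concretely, write $\langle\partial_\nu^-v^h,\gamma^-w\rangle_\Gamma - \langle\partial_\nu^+v^h,\gamma^+w\rangle_\Gamma = \langle\jump{\partial_\nu v^h},\gamma^- w\rangle_\Gamma + \langle\partial_\nu^+ v^h,\jump{\gamma w}\rangle_\Gamma\cdot(\pm1)$ with the signs fixed by the convention $\jump{\gamma v}=\gamma^- v-\gamma^+ v$; the first term vanishes because $\jump{\partial_\nu v^h}\in X_h$ by \eqref{eq:4.4d} while $\gamma^- w\in X_h^\circ$, and the second, after adding $s\gamma\mathbf{u}^h\cdot\boldsymbol\nu+\lambda_0$ inside (using that $\jump{\gamma w}\in Y_h$ and \eqref{eq:4.4e} to introduce/absorb a $Y_h^\circ$ quantity at no cost), produces exactly the missing $\rho_f\langle\lambda_0,\jump{\gamma w}\rangle_\Gamma$ of $f$ together with a $\rho_f s\langle\gamma\mathbf{u}^h\cdot\boldsymbol\nu,\jump{\gamma w}\rangle_\Gamma$ that has to be reconciled with the corresponding term in $\mathcal{A}$. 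Assembling all pieces yields \eqref{eq:4.6}.

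For the converse, I would start from $(\mathbf{u}^h,v^h)\in\mathbf{V}_h\times V_h$ solving \eqref{eq:4.6}. Testing with $(\mathbf{0},w)$ for $w\in C_c^\infty(\mathbb{R}^d\setminus\Gamma)$ recovers \eqref{eq:4.4b} in the distributional sense; the membership conditions \eqref{eq:4.4c}, \eqref{eq:4.4d} are automatic since $v^h\in V_h$ gives \eqref{eq:4.4c} and \eqref{eq:4.4f} directly, and \eqref{eq:4.4d} follows once \eqref{eq:4.4b} holds (the Laplacian being in $L^2$ locally makes the normal derivative traces well-defined, hence their jump lies in $H^{-1/2}(\Gamma)$; membership in $X_h$ is then read off from the converse of the integration-by-parts identity — this is the analogue of the surjectivity argument at the end of the proof of Proposition \ref{prop:3.2}). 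Testing with $(\mathbf{w},0)$, $\mathbf{w}\in\mathbf{V}_h$, recovers \eqref{eq:4.4a}. Finally, reinserting \eqref{eq:4.4b} into the variational identity and using the Green's identity computation in reverse leaves a boundary functional of the form $\langle s\gamma\mathbf{u}^h\cdot\boldsymbol\nu+\partial_\nu^+v^h+\lambda_0,\jump{\gamma w}\rangle_\Gamma$ that must vanish for all $w\in V_h$; since the map $V_h\ni w\mapsto\jump{\gamma w}\in Y_h$ is surjective, \eqref{eq:4.4e} follows.

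The main obstacle I expect is purely the sign/normalization bookkeeping: getting the jump conventions ($\jump{\gamma v}=\gamma^- v - \gamma^+ v$, and the $\gamma^+$ vs $\gamma^-$ side on which each duality pairing of $\mathcal{A}$ lives) consistent across the two subdomains, and making sure the $\rho_f s\langle\gamma\mathbf{u}\cdot\boldsymbol\nu,\jump{\gamma w}\rangle_\Gamma$ term that appears in $\mathcal{A}$ matches in sign the term generated by combining \eqref{eq:4.4a} (which contributes $-\rho_f s\langle\jump{\gamma v^h},\gamma\mathbf{w}\cdot\boldsymbol\nu\rangle_\Gamma$, i.e. acting on $\mathbf{w}$) with the acoustic Green's identity (which contributes the $\jump{\gamma w}$-paired term, acting on $w$). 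A secondary subtlety, exactly as flagged in the proof of Proposition \ref{prop:3.2}, is the converse direction: one must invoke surjectivity of an appropriate trace map — here $V_h\to Y_h$ via $w\mapsto\jump{\gamma w}$ and, for \eqref{eq:4.4d}, the fact that prescribing the normal-derivative jump in $X_h$ is unobstructed — rather than claim the transmission conditions follow term-by-term. Everything else is the Green/Betti manipulation already carried out in Section \ref{sec:3}.
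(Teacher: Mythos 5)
Your proposal is correct and follows essentially the same route as the paper's proof: Green's identity on the acoustic field with the volume term killed by \eqref{eq:4.4b}, annihilation of $\langle\jump{\partial_\nu v^h},\gamma^-w\rangle_\Gamma$ by pairing $X_h$ against $X_h^\circ$, testing \eqref{eq:4.4e} against $\jump{\gamma w}\in Y_h$, and, for the converse, surjectivity of the joint trace map $V_h\ni w\mapsto(\jump{\gamma w},\gamma^-w)\in Y_h\times X_h^{\circ}$. The one imprecision is your phrasing for recovering \eqref{eq:4.4d}: what is needed is that $\gamma^-w$ ranges over all of $X_h^{\circ}$ independently of $\jump{\gamma w}$, so that $\jump{\partial_\nu v^h}\in(X_h^{\circ})^{\circ}=X_h$ --- no normal-derivative jump of the test function $w$ is being prescribed.
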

\begin{proof}
Let $(\mathbf u^h,v^h)$ be a solution pair for \eqref{eq:4.4}. Then, for all $w\in V_h$,
\begin{eqnarray*}
\langle \partial_\nu^+ v^h,\jump{\gamma w}\rangle_\Gamma
	&=& \langle \partial_\nu^- v^h,\gamma^- w\rangle_\Gamma
		- \langle \partial_\nu^+ v^h,\gamma^+ w\rangle_\Gamma
		-\langle\jump{\partial_\nu v^h},\gamma^- w\rangle_\Gamma \\
	&=& (\nabla v^h,\nabla w)_{\mathbb R^d\setminus\Gamma}
		+(s/c)^2 (v^h,w)_{\mathbb R^d},
\end{eqnarray*}
after applying \eqref{eq:4.4b} and \eqref{eq:4.4f}. Therefore, testing \eqref{eq:4.4e} with $\jump{\gamma w}$ for $w\in V_h$, and substituting the above, it follows that
\begin{equation}\label{eq:4.7}
(s/c)^2(v^h,w)_{\mathbb{R}^d}+(\nabla v^h,\nabla w)_{\mathbb{R}^d\setminus\Gamma}+s\langle\gamma \mathbf{u}^h\cdot\boldsymbol\nu,\jump{\gamma w}\rangle_{\Gamma} = -\langle\lambda_0,\jump{\gamma w}\rangle_{\Gamma}
\quad \forall w\in V_h. 
\end{equation}
However, the pair of equations \eqref{eq:4.4a} and \eqref{eq:4.7} are equivalent to \eqref{eq:4.6}.

To prove the converse statement, note that we need to show that a solution of \eqref{eq:4.7} satisfies \eqref{eq:4.4b}, \eqref{eq:4.4d}, and \eqref{eq:4.4e}. Equation \eqref{eq:4.7} applied to a general compactly supported $w\in \mathcal C^\infty(\mathbb R^d\setminus\Gamma)$ is the distributional form of \eqref{eq:4.4b}. Therefore, \eqref{eq:4.7} (after integration by parts) implies
\[
\langle \partial_\nu^- v^h,\gamma^- w\rangle_\Gamma
- \langle \partial_\nu^+ v^h,\gamma^+ w\rangle_\Gamma
+\langle s\gamma\mathbf{u}^h\cdot\boldsymbol\nu+\lambda_0,\jump{\gamma w}\rangle_{\Gamma}=0 \quad\forall w\in V_h,
\]
which, after some simple algebra, is shown to be equivalent to
\begin{equation}\label{eq:4.8}
\langle \partial_{\nu}^+v^h+s\gamma\mathbf{u}^h\cdot\boldsymbol\nu+\lambda_0,\jump{\gamma w}\rangle_{\Gamma} +\langle \jump{\partial_{\nu}v^h},\gamma^-w\rangle_{\Gamma} = 0
\qquad\forall w\in V_h.
\end{equation}
However, the operator $V_h\ni w \longmapsto (\jump{\gamma w},\gamma^-w)\in  Y_h\times X^{\circ}_h$ is surjective, and therefore \eqref{eq:4.8} is equivalent to \eqref{eq:4.4d} and \eqref{eq:4.4e}.
\end{proof}

For the analysis of \eqref{eq:4.6}, we need to redefine the energy norm
\[
\triple{(\bff u,v)}_{|s|}^2:=
	(\bs\sigma(\bff u),\bs\varepsilon(\overline{\bff u}))_{\Omega_-}
	+  \| s \sqrt{\rho_\Sigma} \bff u\|_{\Omega_-}^2 
	+\rho_f \| \nabla v\|_{\mathbb R^d\setminus\Gamma}^2
	+\rho_f  \| (s/c)\,v\|_{\mathbb R^d}^2,
\]
due to the fact that the elastic field is not handled with a potential representation and, therefore, it does not extend to the other side of the interface. Note that $\triple{\cdot}_1$ is equivalent to the $\mathbf H^1(\Omega_-)\times H^1(\mathbb R^d\setminus\Gamma)$ norm and that, similarly to \eqref{eq:3.9}, 
\begin{equation}\label{eq:4.9}
\underline\sigma\triple{(\mathbf u, v)}_1\leq \triple{(\mathbf u,v)}_{|s|}\leq\frac{|s|}{\underline \sigma}\triple{(\mathbf u, v)}_1.
\end{equation}

\begin{proposition}[Well-posedness]\label{prop:4.3}
Problem \eqref{eq:4.6} is uniquely solvable for any $(\phi_0,\lambda_0)\in H^{1/2}(\Gamma)\times H^{-1/2}(\Gamma)$ and $s\in \mathbb C_+$. Moreover, there exist $C_1,C_2>0$, independent of $h$, such that
\begin{align} 
\label{eq:4.10}
\triple{(\bff u^h,v^h)}_1+\|\phi^h\|_{1/2,\Gamma}
	\le& C_1 \frac{|s|}{\sigma\underline\sigma^2 } \| (s\phi_0,\lambda_0)\|_{1/2,-1/2,\Gamma}, \\
\label{eq:4.11}
\|\lambda^h\|_{-1/2,\Gamma} \leq & C_2\frac{|s|^{3/2}}{ \sigma\underline\sigma^{3/2}}\| (s\phi_0,\lambda_0)\|_{1/2,-1/2,\Gamma}.
\end{align}
\end{proposition}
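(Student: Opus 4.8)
The plan is to closely follow the three-step argument behind Proposition~\ref{prop:3.3}: prove a coercivity identity for $\mathcal A$, bound the load functional $f$, invoke Lax--Milgram to obtain unique solvability of \eqref{eq:4.6} together with \eqref{eq:4.10}, and finally extract \eqref{eq:4.11} for $\lambda^h=-\jump{\partial_\nu v^h}$ by testing the resolvent equation satisfied by $v^h$ against an optimal $|s|$-dependent lifting. The first ingredient is the identity
\[
\mathrm{Re}\big(\bar s\,\mathcal A((\bff u,v),(\overline{\bff u},\overline v);s)\big)=\sigma\,\triple{(\bff u,v)}_{|s|}^2 ,
\]
valid for all $(\bff u,v)\in\bff H^1(\Omega_-)\times H^1(\mathbb R^d\setminus\Gamma)$; it follows from a direct computation, the ellipticity constant $C_0$ of $\mathbf C$ controlling the elastic term from below and the two boundary terms of $\mathcal A$ being mutually conjugate up to sign, so that $\bar s$ times their sum is purely imaginary and drops out when the real part is taken --- exactly as in \eqref{eq:3.11}.

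Second, Cauchy--Schwarz in the two duality pairings of $f$, combined with the boundedness of the normal-component trace $\bff w\mapsto\gamma\bff w\cdot\bs\nu$ from $\bff H^1(\Omega_-)$ into $H^{-1/2}(\Gamma)$ and of the jump trace $w\mapsto\jump{\gamma w}$ from $H^1(\mathbb R^d\setminus\Gamma)$ into $H^{1/2}(\Gamma)$, gives
\[
|f((\bff w,w);s)|\le C\rho_f\,\|(s\phi_0,\lambda_0)\|_{1/2,-1/2,\Gamma}\,\triple{(\bff w,w)}_1 .
\]
Since $\mathbf V_h\times V_h$ is closed in $\bff H^1(\Omega_-)\times H^1(\mathbb R^d\setminus\Gamma)$ --- the defining conditions $\jump{\gamma v}\in Y_h$ and $\gamma^- v\in X_h^\circ$ of $V_h$ are preimages of closed sets under continuous maps --- and $\mathcal A$ is evidently bounded, Lax--Milgram applied to the coercive form $(\bff u,v)\mapsto\bar s\,\mathcal A(\,\cdot\,,\,\cdot\,;s)$ yields unique solvability of \eqref{eq:4.6}. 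Taking $(\bff w,w)=(\overline{\bff u^h},\overline{v^h})$, multiplying by $\bar s$, taking real parts, and using \eqref{eq:4.9} to pass between $\triple{\cdot}_{|s|}$ and $\triple{\cdot}_1$ gives $\triple{(\bff u^h,v^h)}_{|s|}\le C\,|s|(\sigma\underline\sigma)^{-1}\|(s\phi_0,\lambda_0)\|_{1/2,-1/2,\Gamma}$, hence the $\triple{(\bff u^h,v^h)}_1$-part of \eqref{eq:4.10}; the bound on $\phi^h$ follows from $\phi^h=-\jump{\gamma v^h}$ (see \eqref{eq:4.5}) and continuity of the jump trace. All constants are independent of $h$ because neither the Lax--Milgram argument nor the lifting used below involves the discrete spaces.

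Third, for \eqref{eq:4.11} recall that $v^h$ solves $-\Delta v^h+(s/c)^2 v^h=0$ in $\mathbb R^d\setminus\Gamma$ (recovered in the proof of Proposition~\ref{prop:4.2} by testing \eqref{eq:4.7} against compactly supported functions) and that $\lambda^h=-\jump{\partial_\nu v^h}$. For $\xi\in H^{1/2}(\Gamma)$ and any $w\in H^1(\mathbb R^d)$ with $\gamma w=\xi$, the weak definition of the two-sided normal traces together with the equation satisfied by $v^h$ gives
\[
\langle\jump{\partial_\nu v^h},\xi\rangle_\Gamma=(\nabla v^h,\nabla w)_{\mathbb R^d\setminus\Gamma}+(s/c)^2(v^h,w)_{\mathbb R^d},
\]
so that $|\langle\jump{\partial_\nu v^h},\xi\rangle_\Gamma|\le C\,\triple{(\bff u^h,v^h)}_{|s|}\big(\|\nabla w\|_{\mathbb R^d}+|s|\,\|w\|_{\mathbb R^d}\big)$, the constant absorbing $\rho_f$ and $c$. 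Now choose $w$ to be the optimal $|s|$-dependent lifting of $\xi$ of Bamberger--HaDuong \cite[Lemma 1]{BaHa:1986a} (see also \cite[Proposition~2.5.1]{Sayas:2014}), for which $\|\nabla w\|_{\mathbb R^d}^2+|s|^2\|w\|_{\mathbb R^d}^2\le C\,(|s|/\underline\sigma)\,\|\xi\|_{1/2,\Gamma}^2$. Taking the supremum over $\xi$ gives $\|\lambda^h\|_{-1/2,\Gamma}\le C\,(|s|/\underline\sigma)^{1/2}\triple{(\bff u^h,v^h)}_{|s|}$, and inserting the energy bound from the previous paragraph produces \eqref{eq:4.11} with $C_2$ depending only on $\Gamma$, $\rho_f$, $c$.

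The step I expect to be the real obstacle is this last one. The naive, uniformly bounded trace lifting ($\|w\|_{H^1(\mathbb R^d)}\le C\|\xi\|_{1/2,\Gamma}$) would only give $\|\nabla w\|_{\mathbb R^d}+|s|\,\|w\|_{\mathbb R^d}\le C|s|\,\|\xi\|_{1/2,\Gamma}$ and hence the suboptimal power $|s|^2$ instead of $|s|^{3/2}$ in \eqref{eq:4.11}; it is the balanced control of $\|\nabla w\|_{\mathbb R^d}$ against $|s|\,\|w\|_{\mathbb R^d}$ afforded by the weighted lifting that buys the half power of $|s|$. Everything else is a routine transcription of Section~\ref{sec:3.1}, the only bookkeeping difference being that the elastic field now lives only on $\Omega_-$, which is already reflected in the redefined energy norm and in \eqref{eq:4.9}.
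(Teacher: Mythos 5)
Your proposal is correct and follows essentially the same route as the paper: the coercivity identity \eqref{eq:4.12}, the bound on $f$, Lax--Milgram plus \eqref{eq:4.9} for \eqref{eq:4.10}, and the $(|s|/\underline\sigma)^{1/2}$ trace estimate for $\jump{\partial_\nu v^h}$ to get \eqref{eq:4.11}. The only difference is cosmetic: where the paper simply cites \cite[Lemma 15]{LaSa:2009a} (stated as \eqref{eq:4.14}) for the normal-derivative bound, you reprove that lemma via the Bamberger--Ha Duong weighted lifting, correctly identifying that the plain lifting would lose half a power of $|s|$.
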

\begin{proof}
It is straightforward to verify that
\begin{equation}\label{eq:4.12}
\mathrm{Re}\left(\overline{s}\mathcal{A}\left((\mathbf{u},v),(\overline{\mathbf{u}},\overline v);s\right) \right)=\sigma\triple{(\mathbf{u},v)}_{|s|}^2,
\end{equation}
and
\[
|f\left((\mathbf{w},w);s\right)|\leq C\|(s\phi_0,\lambda_0)\|_{1/2,-1/2,\Gamma}\triple{(\mathbf{w},w)}_1,
\]
where the constant depends only on $\rho_f$ and $\Gamma$.
Hence, by \eqref{eq:4.9} and the Lax-Milgram lemma, we have unique solvability of \eqref{eq:4.6} and the following bound in the energy norm:
\begin{equation}
\label{eq:4.13}
\triple{(\bff u^h,v^h)}_{|s|} \le  C \frac{|s|}{\sigma \underline\sigma} 
	\|(s\phi_0,\lambda_0)\|_{1/2,-1/2,\Gamma}.
\end{equation}
The estimate \eqref{eq:4.10} can be easily derived from \eqref{eq:4.13} and \eqref{eq:4.9} and the fact that
 $\phi^h=-\jump{\gamma v^h}$. Finally, recalling that $\lambda^h=-\jump{\partial_\nu v^h}$ and using \cite[Lemma 15]{LaSa:2009a}, namely if $\Delta v-s^2 v=0$ in an open set $\mathcal O$ with Lipschitz boundary, then
\begin{equation}\label{eq:4.14}
\|\partial_\nu v\|_{-1/2,\partial\mathcal O}\leq C\left(\frac{|s|}{\underline\sigma}\right)^{1/2}
(\| s v\|_{\mathcal O}+\|\nabla v\|_{\mathcal O}),
\end{equation}
it can be shown that \eqref{eq:4.11} follows from \eqref{eq:4.10}.
\end{proof}
%
\subsection{Semidiscretization error}
We now study the difference between the solutions to the exact problem and their finite dimensional approximations. It is important to stress that $\mathbf{u}^h-\mathbf{u} \notin \mathbf V_h$, and therefore we will not be able to write a transmission problem for the error $\mathbf u^h-\mathbf u$ in the style of  \eqref{eq:4.4}. Instead, we will work with the difference
\[
\mathbf e^h:=\mathbf u^h -\mathbf P_h \mathbf u,
\]
where $\mathbf P_h:\mathbf H^1(\Omega_-)\to \mathbf V_h$ is an elliptic projection that will be defined below. We first need to introduce the finite dimensional space of rigid motions
\[
\mathbf M: = \left\{ \mathbf m \in \mathbf H^1(\Omega_-): (\boldsymbol\sigma(\mathbf m),\boldsymbol\varepsilon(\mathbf m))_{\Omega_-}=0\right\}.
\] 
From now on we will assume that $\mathbf M\subset \mathbf V_h$. The operator $\mathbf P_h$ is given by the solution of the problem
\begin{subequations}\label{eq:4.100}
\begin{alignat}{6}
(\boldsymbol\sigma(\mathbf P_h\mathbf u),\boldsymbol\varepsilon(\mathbf w))_{\Omega_-} =\,& (\boldsymbol\sigma(\mathbf u),\boldsymbol\varepsilon(\mathbf w))_{\Omega_-}\, && \quad \forall \mathbf w \in \mathbf V_h, \\
(\mathbf P_h\mathbf u,\mathbf m)_{\Omega_-} =\,& (\mathbf u,\mathbf m)_{\Omega_-} && \quad \forall \mathbf m\in \mathbf M.
\end{alignat}
\end{subequations}
Using Korn's inequality it is easy to show that $\mathbf P_h$ is well defined and that the approximation error $\|\mathbf u-\mathbf P_h\mathbf u\|_{1,\Omega_-}$ is equivalent to the $\mathbf H^1(\Omega_-)$-best approximation on $\mathbf V_h$. In order to shorten notation, we will write $\mathbf r^h:=\mathbf P_h\mathbf u-\mathbf u$. 

The triplet $(\mathbf e^h, \phi^h ,\lambda^h)\in\mathbf V_h\times Y_h\times X_h$ satisfies the following error equations:
\begin{subequations}\label{eq:4.15}
\begin{align}
\label{eq:4.15a}
a(\mathbf e^h,\mathbf w;s)_{\Omega_-} +s^2\left(\rho_{\Sigma}\mathbf r^h,\mathbf{w}\right)_{\Omega_-} +\rho_{f}s\langle(\phi^h-\phi),\gamma\mathbf{w}\cdot\boldsymbol\nu\rangle_{\Gamma} & = 0 
	\quad\forall \mathbf w\in \mathbf V_h\\
\label{eq:4.15b}
-s\gamma(\mathbf{e}^h+\mathbf r^h)\cdot\boldsymbol\nu + \mathrm{W}(s/c)(\phi^h-\phi) - \left(\tfrac{1}{2}\mathrm{I}-\mathrm{K}^{t}(s/c)\right)(\lambda^h-\lambda) & \,\in\, Y_h^{\circ},\\
\label{eq:4.15c}
\left(\tfrac{1}{2}\mathrm{I}-\mathrm{K}(s/c)\right)(\phi^h-\phi) + \mathrm{V}(s/c)(\lambda^h-\lambda) &  \,\in\,X_h^{\circ}.
\end{align}
\end{subequations}
For this system there is a corresponding non standard transmission problem\\

\begin{proposition}\label{prop:4.4}
If $(\mathbf{e}^h,\lambda^h,\phi^h)$ satisfies \eqref{eq:4.15} and we define
\[
e^h := \mathrm D(s/c)(\phi^h-\phi)-\mathrm S(s/c)(\lambda^h-\lambda),
\]
then the pair then $(\mathbf{e}^h,e^h)$ is a solution of the transmission problem
\begin{subequations}\label{eq:4.16}
\begin{alignat}{6}
\label{eq:4.16a}
a(\mathbf e^h,\mathbf w;s)_{\Omega_-}  -s\langle\rho_{f}\jump{\gamma e^h},\gamma\mathbf{w}\cdot\boldsymbol\nu\rangle_{\Gamma}  = & -s^2\left(\rho_{\Sigma}\mathbf r^h,\mathbf{w}\right)_{\Omega_-} &
	\quad\forall \mathbf w\in \mathbf V_h, \\
\label{eq:4.16b}
-\Delta e^h +(s/c)^2e^h =&\,0 &\quad \mbox{in\; $\mathbb{R}^d\setminus\Gamma$},\\
\label{eq:4.16c}
\jump{\gamma e^h}-\phi \,\in &\, Y_h,\\
\label{eq:4.16d}
\jump{\partial_{\nu}e^h}-\lambda \,\in &\,X_h,\\
\label{eq:4.16e}
s\gamma(\mathbf{e}^h+\mathbf r^h)\cdot\boldsymbol\nu+ \partial^+_\nu e^h\,\in &\, Y_h^{\circ},\\
\label{eq:4.16f}
\gamma^-e^h \,\in &\, X_h^{\circ}.
\end{alignat}
\end{subequations}
Conversely, if $(\mathbf{e}^h,e^h)$ is a solution of \eqref{eq:4.16} then
\[
(\mathbf{e}^h,\phi^h,\lambda^h) := (\mathbf{e}^h,\phi-\jump{\gamma e^h},\lambda-\jump{\partial_{\nu}e^h}),
\]
solve \eqref{eq:4.15}.
\end{proposition}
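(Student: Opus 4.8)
The plan is to mimic the proof of Proposition~\ref{prop:4.1} essentially line by line; the only structural novelty is that the elliptic-projection remainder $\mathbf r^h$ now sits in the right-hand sides of \eqref{eq:4.15} and \eqref{eq:4.16}, but since it enters only as data it plays no role in the potential-theoretic steps and is simply carried along. The working tools are the jump and one-sided trace relations for the acoustic layer potentials: setting $e^h:=\mathrm D(s/c)(\phi^h-\phi)-\mathrm S(s/c)(\lambda^h-\lambda)$, the single layer has continuous trace and jumping normal derivative while the double layer has jumping trace and continuous normal derivative, so
\[
\jump{\gamma e^h}=-(\phi^h-\phi),\qquad\jump{\partial_\nu e^h}=-(\lambda^h-\lambda),
\]
and in addition $\gamma^\mp\mathrm D(s/c)=\mp\tfrac12\mathrm I+\mathrm K(s/c)$, $\partial_\nu^\mp\mathrm S(s/c)=\pm\tfrac12\mathrm I+\mathrm K^t(s/c)$, $\gamma\mathrm S(s/c)=\mathrm V(s/c)$ and $\partial_\nu\mathrm D(s/c)=-\mathrm W(s/c)$.

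For the direct implication I would argue as follows. Equation~\eqref{eq:4.16b} holds because $e^h$ is a combination of potentials for $\Delta-(s/c)^2$. The jump relations immediately give $\jump{\gamma e^h}-\phi=-\phi^h\in Y_h$ and $\jump{\partial_\nu e^h}-\lambda=-\lambda^h\in X_h$, which are \eqref{eq:4.16c}--\eqref{eq:4.16d}. Inserting $\phi^h-\phi=-\jump{\gamma e^h}$ into \eqref{eq:4.15a} produces \eqref{eq:4.16a} verbatim. For \eqref{eq:4.16e} I evaluate the exterior normal derivative $\partial_\nu^+e^h=-\mathrm W(s/c)(\phi^h-\phi)+(\tfrac12\mathrm I-\mathrm K^t(s/c))(\lambda^h-\lambda)$ and note that $s\gamma(\mathbf e^h+\mathbf r^h)\cdot\boldsymbol\nu+\partial_\nu^+e^h$ is precisely the negative of the left-hand side of \eqref{eq:4.15b}; since $Y_h^\circ$ is a subspace this gives \eqref{eq:4.16e}. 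Likewise, the interior trace $\gamma^-e^h=(-\tfrac12\mathrm I+\mathrm K(s/c))(\phi^h-\phi)-\mathrm V(s/c)(\lambda^h-\lambda)$ equals the negative of the left-hand side of \eqref{eq:4.15c}, which yields \eqref{eq:4.16f}.

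For the converse I would put $\phi^h:=\phi-\jump{\gamma e^h}$ and $\lambda^h:=\lambda-\jump{\partial_\nu e^h}$, so that \eqref{eq:4.16c}--\eqref{eq:4.16d} place these in $Y_h$ and $X_h$ respectively, while $\mathbf e^h\in\mathbf V_h$ holds by hypothesis. Since $e^h\in H^1(\mathbb R^d\setminus\Gamma)$ solves the homogeneous resolvent equation on both sides of $\Gamma$ and (because $\mathrm{Re}\,s>0$) has the correct behaviour at infinity, Green's representation formula gives $e^h=\mathrm S(s/c)\jump{\partial_\nu e^h}-\mathrm D(s/c)\jump{\gamma e^h}=\mathrm D(s/c)(\phi^h-\phi)-\mathrm S(s/c)(\lambda^h-\lambda)$, so $e^h$ has exactly the form used in the direct part. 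Reversing the three computations above then turns \eqref{eq:4.16a} into \eqref{eq:4.15a}, \eqref{eq:4.16e} into \eqref{eq:4.15b}, and \eqref{eq:4.16f} into \eqref{eq:4.15c}.

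I expect no serious obstacle: the whole argument is bookkeeping, and $\mathbf r^h$ need never be manipulated. The one point deserving attention is the sign/operator accounting in \eqref{eq:4.16e}--\eqref{eq:4.16f}, where one must use the \emph{exterior} one-sided trace formula for $\partial_\nu^+e^h$ but the \emph{interior} one for $\gamma^-e^h$, and exploit that $Y_h^\circ$ and $X_h^\circ$ are subspaces so membership is unaffected by an overall change of sign. As in Proposition~\ref{prop:4.1}, no surjectivity argument is required here; that only enters when passing from the transmission problem to the variational formulation.
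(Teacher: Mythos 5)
Your proposal is correct and follows essentially the same route as the paper: it uses the jump identities $\jump{\gamma e^h}=\phi-\phi^h$, $\jump{\partial_\nu e^h}=\lambda-\lambda^h$ together with the one-sided trace formulas for $\mathrm S$ and $\mathrm D$ to translate \eqref{eq:4.15} into \eqref{eq:4.16} and back, with the uniqueness of the potential representation handling the converse. Your write-up is in fact more detailed than the paper's sketch, and the sign/operator bookkeeping in \eqref{eq:4.16e}--\eqref{eq:4.16f} checks out.
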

\begin{proof}
Starting with a solution of \eqref{eq:4.15}, we see that equation \eqref{eq:4.16b} is a consequence of the definition of $e^h$, while \eqref{eq:4.16a} follows readily from \eqref{eq:4.15a}, noting that $(\jump{\gamma e^h},\jump{\partial_\nu e^h})=(\phi-\phi^h,\lambda-\lambda^h)$. The equations \eqref{eq:4.16c} and \eqref{eq:4.16d} can also be verified from the last observation, since
$
 Y_h\times X_h\ni (\phi^h,\lambda^h)=(\phi-\jump{\gamma e^h},\lambda-\jump{\partial_{\nu}e^h}).
$
Finally, using
\[
\partial_{\nu}^-(\mathrm S(s)\lambda) = (\tfrac{1}{2}\mathrm I +\mathrm{K}^t(s))\lambda \,,\qquad \gamma^-(\mathrm D (s)\phi) = \left(-\tfrac{1}{2}\mathrm I + \mathrm{K}(s)\right)\phi,
\]
we see that \eqref{eq:4.15b} and \eqref{eq:4.15c} imply \eqref{eq:4.16e} and \eqref{eq:4.16f}.

The proof of the converse statement is very similar.
\end{proof}
\begin{proposition}\label{prop:4.5}
The system \eqref{eq:4.16} is equivalent to the variational problem of finding $(\mathbf{e}^h,e^h)\in \mathbf{H}^1(\Omega_-)\times H^1(\mathbb{R}^d\setminus\Gamma)$ such that
\begin{subequations}\label{eq:4.17}
\begin{alignat}{6}
\label{eq:4.17a}
 (\gamma^-e^h,\jump{\gamma e^h}-\phi) \in \; &  X_h^{\circ}\times Y_h  , && \\
\label{eq:4.17b}
\mathcal{A}((\bff e^h, e^h),(\bff w,w);s)  =\; &b\left((\mathbf w, w);s\right) &&
\qquad\forall (\bff w,w)\; \in \mathbf{V}_h\times V_h,
\end{alignat}
\end{subequations}
where the bilinear form $\mathcal{A}$ is defined in the statement of Proposition \ref{prop:4.2} and
\[
b\left((\mathbf w,w);s\right):= \rho_f\langle\lambda,\gamma^-w\rangle_{\Gamma} +s\rho_f\langle\gamma \mathbf r^h\cdot\boldsymbol\nu,\jump{\gamma w}\rangle_{\Gamma}-s^2\left(\rho_{\Sigma}\mathbf r^h,\mathbf{w}\right)_{\Omega_-} .
\]
\end{proposition}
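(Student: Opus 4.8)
The plan is to mirror the proof of Proposition \ref{prop:4.2}, keeping track of how the inhomogeneous right-hand sides $\mathbf r^h$, $\phi$, and $\lambda$ propagate through the argument. First I would show that a solution of \eqref{eq:4.16} solves \eqref{eq:4.17}. The side condition \eqref{eq:4.17a} is immediate: $\gamma^-e^h\in X_h^\circ$ is exactly \eqref{eq:4.16f}, and $\jump{\gamma e^h}-\phi\in Y_h$ is \eqref{eq:4.16c}. For the variational identity, I test \eqref{eq:4.16e} against $\jump{\gamma w}$ for $w\in V_h$ and rewrite $\langle\partial_\nu^+ e^h,\jump{\gamma w}\rangle_\Gamma$ by splitting it as $\langle\partial_\nu^- e^h,\gamma^- w\rangle_\Gamma-\langle\partial_\nu^+ e^h,\gamma^+ w\rangle_\Gamma-\langle\jump{\partial_\nu e^h},\gamma^- w\rangle_\Gamma$, exactly as in Proposition \ref{prop:4.2}. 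The first two terms combine, via \eqref{eq:4.16b} and integration by parts, into $(\nabla e^h,\nabla w)_{\mathbb R^d\setminus\Gamma}+(s/c)^2(e^h,w)_{\mathbb R^d}$; the last term vanishes because $\gamma^- w\in X_h^\circ$ for $w\in V_h$ while $\jump{\partial_\nu e^h}-\lambda\in X_h$ by \eqref{eq:4.16d}, so $\langle\jump{\partial_\nu e^h},\gamma^- w\rangle_\Gamma=\langle\lambda,\gamma^- w\rangle_\Gamma$. Collecting, \eqref{eq:4.16e} becomes
\[
(s/c)^2(e^h,w)_{\mathbb R^d}+(\nabla e^h,\nabla w)_{\mathbb R^d\setminus\Gamma}+s\langle\gamma\mathbf e^h\cdot\boldsymbol\nu,\jump{\gamma w}\rangle_\Gamma = -s\langle\gamma\mathbf r^h\cdot\boldsymbol\nu,\jump{\gamma w}\rangle_\Gamma+\langle\lambda,\gamma^- w\rangle_\Gamma,
\]
for all $w\in V_h$. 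Multiplying by $\rho_f$ and adding \eqref{eq:4.16a} yields precisely \eqref{eq:4.17b} with the stated $b$.

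For the converse, I start from a solution of \eqref{eq:4.17}. Testing \eqref{eq:4.17b} with compactly supported $w\in\mathcal C^\infty(\mathbb R^d\setminus\Gamma)$ (and $\mathbf w=0$) recovers the Helmholtz equation \eqref{eq:4.16b} in the distributional sense. Then integration by parts in the volume terms of \eqref{eq:4.17b}, using \eqref{eq:4.16b}, gives
\[
\langle\partial_\nu^- e^h,\gamma^- w\rangle_\Gamma-\langle\partial_\nu^+ e^h,\gamma^+ w\rangle_\Gamma+\langle s\gamma(\mathbf e^h+\mathbf r^h)\cdot\boldsymbol\nu,\jump{\gamma w}\rangle_\Gamma-\langle\lambda,\gamma^- w\rangle_\Gamma=0\qquad\forall w\in V_h,
\]
and the algebraic identity $\langle\partial_\nu^- e^h,\gamma^- w\rangle_\Gamma-\langle\partial_\nu^+ e^h,\gamma^+ w\rangle_\Gamma = \langle\partial_\nu^+ e^h,\jump{\gamma w}\rangle_\Gamma+\langle\jump{\partial_\nu e^h},\gamma^- w\rangle_\Gamma$ rewrites this as
\[
\langle\partial_\nu^+ e^h+s\gamma(\mathbf e^h+\mathbf r^h)\cdot\boldsymbol\nu,\jump{\gamma w}\rangle_\Gamma+\langle\jump{\partial_\nu e^h}-\lambda,\gamma^- w\rangle_\Gamma=0\qquad\forall w\in V_h.
\]
Since the map $V_h\ni w\mapsto(\jump{\gamma w},\gamma^- w)\in Y_h\times X_h^\circ$ is surjective, the two terms vanish independently: the first gives \eqref{eq:4.16e}, and the second gives $\jump{\partial_\nu e^h}-\lambda\in X_h$, which is \eqref{eq:4.16d}. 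The remaining conditions \eqref{eq:4.16a}, \eqref{eq:4.16c}, \eqref{eq:4.16f} are read off directly: \eqref{eq:4.16a} is \eqref{eq:4.17b} restricted to $w=0$; \eqref{eq:4.16c} and \eqref{eq:4.16f} are the two halves of the membership \eqref{eq:4.17a} together with $e^h\in V_h$ (note $\jump{\gamma e^h}\in Y_h$ holds automatically since $e^h\in V_h$, but we need $\jump{\gamma e^h}-\phi\in Y_h$, which is the second component of \eqref{eq:4.17a}).

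I do not expect a genuine obstacle here; the only points needing a little care are bookkeeping ones. One must verify that $e^h\in V_h$ is consistent: the definition of $V_h$ requires $\jump{\gamma e^h}\in Y_h$ and $\gamma^- e^h\in X_h^\circ$, and the latter is the nontrivial part of \eqref{eq:4.17a}. Also, in the forward direction one should confirm $e^h\in V_h$ so that it is a legitimate argument, which follows from \eqref{eq:4.16c}--\eqref{eq:4.16f} (using $\phi\in Y_h$ is \emph{not} assumed, so $\jump{\gamma e^h}\in Y_h$ must come from the representation $e^h=\mathrm D(s/c)(\phi^h-\phi)-\mathrm S(s/c)(\lambda^h-\lambda)$ and $\phi^h\in Y_h$, giving $\jump{\gamma e^h}=-(\phi^h-\phi)$... here one sees $\jump{\gamma e^h}\in Y_h$ iff $\phi\in Y_h$, so in fact membership $e^h\in V_h$ in Proposition \ref{prop:4.5} must be interpreted with $\phi$ possibly outside $Y_h$; the clean statement is that $e^h\in H^1(\mathbb R^d\setminus\Gamma)$ with $\gamma^-e^h\in X_h^\circ$ and $\jump{\gamma e^h}-\phi\in Y_h$, which is exactly what \eqref{eq:4.17a} encodes). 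Handling this consistently between \eqref{eq:4.16} and \eqref{eq:4.17} is the one place where I would be careful, and it is purely a matter of stating the function-space constraints precisely rather than a mathematical difficulty. The surjectivity of $w\mapsto(\jump{\gamma w},\gamma^- w)$ on $V_h$ is the same lifting fact already used in Proposition \ref{prop:4.2} and can be invoked directly.
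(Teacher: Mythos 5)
Your argument is correct and is exactly the route the paper intends: the paper's own proof of Proposition \ref{prop:4.5} is simply ``very similar to Proposition \ref{prop:4.2}, details omitted,'' and you have supplied precisely those details (test \eqref{eq:4.16e} against $\jump{\gamma w}$, integrate by parts using \eqref{eq:4.16b}, use \eqref{eq:4.16d} together with $\gamma^-w\in X_h^\circ$ to replace $\langle\jump{\partial_\nu e^h},\gamma^-w\rangle_\Gamma$ by $\langle\lambda,\gamma^-w\rangle_\Gamma$, add the elastic equation, and invert via the surjectivity of $w\mapsto(\jump{\gamma w},\gamma^-w)$). Your closing remarks on the function-space bookkeeping ($e^h\notin V_h$ in general; the constraints live in \eqref{eq:4.17a}) are also on point.

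One item to fix: your forward computation produces the right-hand side
\[
-s^2(\rho_\Sigma\mathbf r^h,\mathbf w)_{\Omega_-}+\rho_f\langle\lambda,\gamma^-w\rangle_\Gamma
-\,s\rho_f\langle\gamma\mathbf r^h\cdot\boldsymbol\nu,\jump{\gamma w}\rangle_\Gamma,
\]
i.e.\ the $\mathbf r^h$ boundary term carries a \emph{minus} sign, whereas the $b$ printed in the statement of Proposition \ref{prop:4.5} carries a plus sign; your claim that the computation ``yields precisely \eqref{eq:4.17b} with the stated $b$'' is therefore not literally true, and your converse step silently uses the minus-sign version (otherwise you would recover $s\gamma(\mathbf e^h-\mathbf r^h)\cdot\boldsymbol\nu+\partial_\nu^+e^h\in Y_h^\circ$ instead of \eqref{eq:4.16e}). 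The derivation you give is the consistent one, so this is almost certainly a sign typo in the paper's statement of $b$ --- harmless for Proposition \ref{prop:4.6}, since only $\|s\mathbf r^h\|_{1,\Omega_-}$ enters the bounds --- but you should flag the discrepancy rather than assert an exact match.
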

\begin{proof}
The proof is very similar to the one of Proposition \ref{prop:4.2}. Details are omitted. \end{proof}

\begin{proposition}\label{prop:4.6}
Problem \eqref{eq:4.17} is uniquely solvable for any $(\mathbf u,\phi,\lambda)\in \mathbf H^1(\Omega_-)\times H^{1/2}(\Gamma)\times H^{-1/2}(\Gamma)$ and $s\in \mathbb C_+$. Moreover, there exist constants $C_1, C_2>0$ independent of $h$ such that
\begin{align}
\label{4.18}
\triple{(\bff e^h,e^h)}_{1} + \|\phi-\phi^h\|_{1/2,\Gamma} \le& C_1\frac{|s|}{\sigma\underline{\sigma}}\Big( 
\| (s\,\phi,\lambda)\|_{1/2,-1/2,\Gamma}
+\|s\mathbf r^h\|_{1,\Omega_-}+\|s^2\mathbf r^h\|_{\Omega_-}\Big), \\
\label{4.19}
\|\lambda-\lambda^h\|_{1/2,\Gamma}\le& C_2\frac{|s|^{3/2}}{\sigma\underline{\sigma}^{3/2}}\Big(\| (s\,\phi,\lambda)\|_{1/2,-1/2,\Gamma}+\|s\mathbf r^h\|_{1,\Omega_-}+\|s^2\mathbf r^h\|_{\Omega_-}\Big).
\end{align}
\end{proposition}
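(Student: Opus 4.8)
The plan is to carry over to the error problem \eqref{eq:4.17} the same three‑step scheme already used for the exact semidiscrete problem (transmission problem $\to$ variational formulation $\to$ coercivity/Lax--Milgram), the only new ingredient being a lifting that absorbs the inhomogeneity hidden in the side constraint \eqref{eq:4.17a}. Since Propositions \ref{prop:4.4} and \ref{prop:4.5} already provide the first two steps, what is really left is the analogue of Proposition \ref{prop:4.3} (together with Corollary \ref{cor:3.8}) for \eqref{eq:4.17}.

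First I would choose a lifting $w_\phi\in H^1(\mathbb R^d\setminus\Gamma)$, supported in $\overline{\Omega_+}$, with $\gamma^+w_\phi=-\phi$ --- hence $\gamma^-w_\phi=0\in X_h^\circ$ and $\jump{\gamma w_\phi}=\phi$ --- and $\|w_\phi\|_{1,\mathbb R^d\setminus\Gamma}\le C\|\phi\|_{1/2,\Gamma}$; such a $w_\phi$ exists by surjectivity of the exterior trace. Setting $\widetilde e^h:=e^h-w_\phi$, the pair $(\mathbf e^h,\widetilde e^h)$ (with $\mathbf e^h\in\mathbf V_h$, as it is in the application) now lies in $\mathbf V_h\times V_h$, because the two conditions defining $V_h$ are precisely \eqref{eq:4.17a}, and it satisfies
\[
\mathcal A\big((\mathbf e^h,\widetilde e^h),(\mathbf w,w);s\big)=b\big((\mathbf w,w);s\big)-\mathcal A\big((\mathbf 0,w_\phi),(\mathbf w,w);s\big)\qquad\forall(\mathbf w,w)\in\mathbf V_h\times V_h,
\]
with the converse ($e^h:=\widetilde e^h+w_\phi$) also holding. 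On the closed space $\mathbf V_h\times V_h$ the form $\mathcal A(\punto,\punto;s)$ is bounded and obeys the coercivity identity \eqref{eq:4.12}, and the right‑hand side above is a bounded functional, so the Lax--Milgram lemma gives existence and uniqueness exactly as in Proposition \ref{prop:4.3}; uniqueness for \eqref{eq:4.17} itself follows since a difference of two solutions satisfies the homogeneous constraint, hence lies in $\mathbf V_h\times V_h$, and is killed by coercivity.

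For the bounds I would test with $(\overline{\mathbf e^h},\overline{\widetilde e^h})$ and use \eqref{eq:4.12}, getting $\sigma\triple{(\mathbf e^h,\widetilde e^h)}_{|s|}^2\le|s|\,\big(|b((\overline{\mathbf e^h},\overline{\widetilde e^h});s)|+|\mathcal A((\mathbf 0,w_\phi),(\overline{\mathbf e^h},\overline{\widetilde e^h});s)|\big)$. The functional $b$ is bounded by $C\big(\|\lambda\|_{-1/2,\Gamma}+\|s\mathbf r^h\|_{1,\Omega_-}+\|s^2\mathbf r^h\|_{\Omega_-}\big)\triple{(\mathbf w,w)}_1$ (trace inequalities for the two duality pairings, and the lower bound on $\rho_\Sigma$ for the volume term --- it is precisely the $s^2(\rho_\Sigma\mathbf r^h,\mathbf w)_{\Omega_-}$ term that forces $\|s^2\mathbf r^h\|_{\Omega_-}$ into the estimate); and $\mathcal A(\punto,\punto;s)$ obeys a continuity bound $|\mathcal A((\mathbf a,\alpha),(\mathbf w,w);s)|\le C\tfrac{|s|}{\underline\sigma}\triple{(\mathbf a,\alpha)}_1\triple{(\mathbf w,w)}_{|s|}$ obtained just as in \eqref{eq:3.20} (volume part controlled by $\triple{\punto}_{|s|}\triple{\punto}_{|s|}$, the two boundary coupling terms by $C|s|\triple{\punto}_1\triple{\punto}_1$, then \eqref{eq:4.9}), so the lifting contribution is bounded by $C\tfrac{|s|}{\underline\sigma}\|\phi\|_{1/2,\Gamma}\triple{(\mathbf w,w)}_{|s|}$. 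Combining these, using $\triple{\punto}_1\le\underline\sigma^{-1}\triple{\punto}_{|s|}$, cancelling one power of $\triple{(\mathbf e^h,\widetilde e^h)}_{|s|}$ and rewriting $|s|\,\|\phi\|_{1/2,\Gamma}=\|s\phi\|_{1/2,\Gamma}$ gives the estimate for $\triple{(\mathbf e^h,\widetilde e^h)}_{|s|}$; since $\triple{(\mathbf 0,w_\phi)}_{|s|}\le C(1+|s|)\|\phi\|_{1/2,\Gamma}$ is dominated by the right‑hand side, the same bound holds for $\triple{(\mathbf e^h,e^h)}_{|s|}$, and $\|\phi-\phi^h\|_{1/2,\Gamma}=\|\jump{\gamma e^h}\|_{1/2,\Gamma}\le C\triple{(\mathbf e^h,e^h)}_{|s|}$ by Proposition \ref{prop:4.4}; passing from $\triple{\punto}_{|s|}$ to $\triple{\punto}_1$ via \eqref{eq:4.9} yields \eqref{4.18} (the precise powers of $\underline\sigma$ being the routine bookkeeping of \eqref{eq:4.9}, as in Corollary \ref{cor:3.8}). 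For \eqref{4.19} I would use $\lambda-\lambda^h=\jump{\partial_\nu e^h}$ (Proposition \ref{prop:4.4}), apply the trace‑regularity bound \eqref{eq:4.14} with parameter $s/c$ on $\Omega_-$ and on $\Omega_+$, add, dominate $\|s\,e^h\|_{\mathbb R^d}+\|\nabla e^h\|_{\mathbb R^d\setminus\Gamma}$ by $\triple{(\mathbf e^h,e^h)}_{|s|}$, and insert the bound just obtained; the extra factor $(|s|/\underline\sigma)^{1/2}$ accounts for the gap between the exponents in \eqref{4.18} and \eqref{4.19}.

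The one step that needs genuine thought rather than copying is the lifting: $w_\phi$ must simultaneously realise $\jump{\gamma w_\phi}=\phi$ \emph{and} keep $\gamma^-w_\phi$ inside $X_h^\circ$ independently of $h$, which is why it is supported on the exterior side, so that $\gamma^-w_\phi=0$. Everything afterward is a bookkeeping of the $s$‑weights --- the single factor $s$ in the coupling term of $\mathcal A$ turns $\|\phi\|_{1/2,\Gamma}$ into $\|s\phi\|_{1/2,\Gamma}$, and the $s^2$ in front of $(\rho_\Sigma\mathbf r^h,\mathbf w)_{\Omega_-}$ forces $\|s^2\mathbf r^h\|_{\Omega_-}$ --- following the templates of Propositions \ref{prop:4.1}--\ref{prop:4.3} and Corollary \ref{cor:3.8}.
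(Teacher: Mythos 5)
Your argument is essentially the paper's own proof: the authors use exactly the same lifting (a $w_\phi$ with $\gamma^-w_\phi=0$ and exterior trace $\pm\phi$, so that $(\mathbf e^h,e^h\pm w_\phi)\in\mathbf V_h\times V_h$), the same coercivity identity \eqref{eq:4.12} together with the continuity bounds \eqref{eq:4.21}--\eqref{eq:4.22}, the same cancellation and triangle-inequality step to bound $\triple{(\mathbf e^h,e^h)}_{|s|}$, and the same passage to \eqref{4.19} via $\lambda-\lambda^h=\jump{\partial_\nu e^h}$ and \eqref{eq:4.14}. Your sign convention for the lifting and the explicit restatement of the shifted variational problem for $\widetilde e^h$ are only cosmetic differences, so the proposal is correct and follows the paper's route.
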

\begin{proof}
The existence and uniqueness of the solution to \eqref{eq:4.17} is proven in a way analogous to that used in Proposition \ref{prop:4.3}. We will next prove a bound in the energy norm
\begin{equation}
\label{eq:4.20}
\triple{(\mathbf e^h,e^h)}_{|s|} \leq C_1\frac{|s|}{\sigma\underline{\sigma}}\Big(\| (s\,\phi,\lambda)\|_{1/2,-1/2,\Gamma}+\|s\mathbf r^h\|_{1,\Omega_-}+\|s^2\mathbf r^h\|_{\Omega_-}\Big).
\end{equation}
The estimate \eqref{4.18} follows from \eqref{eq:4.20} and \eqref{eq:4.9}. In order to get to \eqref{4.19} we make use of \eqref{eq:4.20}, the fact that $\lambda-\lambda^h=\jump{\partial_\nu e^h}$, and \eqref{eq:4.14}. 

To prove \eqref{eq:4.20} we proceed as follows. We first obtain an upper bound for the bilinear form
\begin{equation}
\label{eq:4.21}
|\mathcal{A}((\bff u,v),(\bff w,w);s)| 
	\le 
	C \frac{|s|}{\underline\sigma} 
		\triple{(\bff u,v)}_1\triple{(\bff w,w)}_{|s|},
\end{equation}
by the same argument that was used in Proposition \ref{prop:3.7}. Also
\begin{equation}
\label{eq:4.22}
|b\left((\mathbf w, w);s\right)| \leq \frac{C}{\underline\sigma}\left(\|\lambda\|_{-1/2,\Gamma}+\|s\mathbf r^h\|_{1,\Omega_-} +\|s^2\mathbf r^h\|_{\Omega_-}\right) \triple{(\mathbf w, w)}_{|s|}.
\end{equation}
The constants in \eqref{eq:4.21} and \eqref{eq:4.22} depend only on the geometry. Now, for $\phi\in H^{1/2}(\Gamma)$, pick a lifting $w_\phi\in H^1(\mathbb R^d\setminus\Gamma)$ such that $\gamma^+w_\phi=\phi$,  $\gamma^-w_\phi=0$, and
\begin{equation}\label{eq:4.23}
 \|w_\phi\|_{1,\mathbb{R}^d\setminus\Gamma}\leq C \|\phi\|_{1/2,\Gamma}.
\end{equation}
Since $(\mathbf e^h,e^h+w_\phi)\in\mathbf V_h\times V_h$ we can use \eqref{eq:4.12}, \eqref{eq:4.17b}, \eqref{eq:4.21}, and \eqref{eq:4.22} (i.e., coercivity, the variational equation, and boundedness of the bilinear form and right-hand side) to estimate
\begin{align*}
\triple{(\mathbf e^h,e^h+w_\phi)}_{|s|}^2 \leq & \frac{|s|}{\sigma}|\mathcal A \left((\mathbf e^h,e^h+w_\phi),(\mathbf e^h,e^h+w_\phi);s\right)| \\
 = & \frac{|s|}{\sigma}|b\left((\mathbf e^h,e^h+w_\phi);s\right) + \mathcal A \left((\mathbf 0,w_\phi),(\mathbf e^h,e^h+w_\phi);s\right)| \\
\leq & C\frac{|s|}{\sigma\underline\sigma}  \triple{(\mathbf e^h,e^h+w_\phi)}_{|s|}  \\
& \left( |s| \|w_\phi\|_{1,\mathbb{R}^d\setminus\Gamma}+ \|\lambda\|_{-1/2,\Gamma}+\|s\mathbf r^h\|_{1,\Omega_-}+\|s^2\mathbf r^h\|_{\Omega_-}\right).
\end{align*}
This bound, together with
\[
\triple{(\mathbf 0,\omega_\phi)}_{|s|}\le \frac{C}{\underline\sigma}\| s\,\phi\|_{1/2,\Gamma}
\]
(see \eqref{eq:4.9} and \eqref{eq:4.23}) prove \eqref{eq:4.20}.
\end{proof}
%
\subsection{Estimates in the time-domain}
Using the bounds for  the error operators derived in the previous section, we can prove explicit time-domain estimates. 
Just like in the BEM/BEM case,  we can use  \cite[Theorem 7.1]{DoSa:2013} and combine it with the Laplace-domain estimates from Propositions \ref{prop:4.3} and \ref{prop:4.6} to obtain the following results.
\begin{corollary}\label{cor:4.7}
Consider causal problem data
$\phi_0\in W^4_+(H^{1/2}(\Gamma))$ and $\lambda_0\in  W^3_+(H^{-1/2}(\Gamma)).$
Then $\mathbf u^h, v^h, \phi^h, \lambda^h$ are continuous causal functions of time and for all $t\geq0$ :
\begin{align*}
\triple{(\mathbf{u}^h,v^h)(t)}_1 +\|\phi^h(t)\|_{1/2,\Gamma}\leq\, & D_1\max\{1,t^2\}\frac{t^2}{t+1}\int_0^t\|\mathcal{P}_{3}(\dot{\phi_0},\lambda_0)(\tau)\|_{1/2,-1/2,\Gamma}\;d\tau, \\
\|\lambda^h(t)\|_{-1/2,\Gamma} \leq\, & D_2\max\{1,t^{3/2}\}\frac{t\sqrt t}{\sqrt{t+1}}\int_0^t\|\mathcal{P}_{3}(\dot{\phi_0},\lambda_0)(\tau)\|_{1/2, -1/2,\Gamma}\;d\tau.
\end{align*}
where $D_1$ and $D_2$ depend only on $\Gamma$.
\end{corollary}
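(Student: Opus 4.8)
The plan is to apply the Laplace inversion theorem \cite[Theorem 7.1]{DoSa:2013} directly to the $s$-dependent stability bounds of Proposition \ref{prop:4.3}. The three quantities $\triple{(\mathbf u^h,v^h)}_1+\|\phi^h\|_{1/2,\Gamma}$ and $\|\lambda^h\|_{-1/2,\Gamma}$ have transfer-function-type bounds of the form $C\,|s|^{k}\underline\sigma^{-m}\sigma^{-1}$ applied to the data $(s\phi_0,\lambda_0)$; the inversion theorem converts such a bound, for a causal operator, into a time-domain estimate in terms of a primitive in time and a polynomial growth factor. The only care needed is bookkeeping: the factor $s\phi_0$ inside the norm means one time derivative lands on $\phi_0$, so the right-hand side is naturally expressed through $\dot\phi_0$; the overall power of $|s|$ (here $|s|/(\sigma\underline\sigma^2)$, i.e. $|s|^3$ after clearing $\underline\sigma$ on $\mathbb C_+$ split into $\sigma\le 1$ and $\sigma\ge1$) dictates the number of extra time derivatives, which is why the data is required in $W^3_+(H^{-1/2})$ and $W^4_+(H^{1/2})$ (the extra derivative on the $\phi_0$ component accounting for the $s$ inside) and why the operator $\mathcal P_3$ appears.

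Concretely, I would first recall that by Proposition \ref{prop:4.1} and \ref{prop:4.2} the semidiscrete Galerkin solution is recovered from the variational problem \eqref{eq:4.6}, whose solution operator $(s\phi_0,\lambda_0)\mapsto(\mathbf u^h,v^h,\phi^h,\lambda^h)$ is analytic in $\mathbb C_+$ and, by Proposition \ref{prop:4.3}, bounded as in \eqref{eq:4.10}--\eqref{eq:4.11}. Next I would cast these in the normalized form required by \cite[Theorem 7.1]{DoSa:2013}: writing the bound as $C_\Gamma\,\frac{|s|}{\underline\sigma^{2}}\,\frac{1}{\sigma}\|(s\phi_0,\lambda_0)\|$, one identifies the "$\mu$" exponent ($\mu=1$ for the leading $|s|$, after absorbing $1/\sigma$ as the integration-in-time factor) and the number of derivatives to be placed on the data so that $s^{\text{(deriv)}}(\widehat{\dot\phi_0},\widehat{\lambda_0})$ is bounded in $L^1$ along vertical lines; this is exactly the hypothesis $\phi_0\in W^4_+(H^{1/2})$, $\lambda_0\in W^3_+(H^{-1/2})$. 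Applying the theorem then yields, for each of the two groups of unknowns, a bound of the shape $D\,\max\{1,t^\alpha\}\,\frac{t^\beta}{\sqrt{t+1}}\int_0^t\|\mathcal P_3(\dot\phi_0,\lambda_0)(\tau)\|\,d\tau$, with $(\alpha,\beta)=(2,2)$ for $(\mathbf u^h,v^h,\phi^h)$ coming from \eqref{eq:4.10} and $(\alpha,\beta)=(3/2,3/2)$ for $\lambda^h$ coming from the extra $|s|^{1/2}$ in \eqref{eq:4.11}. Finally, causality and continuity in time of $\mathbf u^h,v^h,\phi^h,\lambda^h$ follow from the same inversion theorem, since the transfer functions are analytic and polynomially bounded in $\mathbb C_+$.

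The main obstacle — really the only non-mechanical point — is the careful matching of the $\underline\sigma$ powers and the $|s|$ powers against the precise statement of \cite[Theorem 7.1]{DoSa:2013}, so that the correct number of time derivatives and the correct polynomial-in-$t$ prefactor come out. In particular one must remember that $\underline\sigma=\min\{\sigma,1\}$ contributes only near $s=0$ (large time), which is what produces the $\max\{1,t^\alpha\}$ factor, while the plain $|s|$ powers produce the $t^\beta/\sqrt{t+1}$ growth and fix the derivative count; the $s$ sitting in front of $\phi_0$ is what turns $\phi_0\in W^4_+$ (rather than $W^3_+$) into the natural hypothesis and puts $\dot\phi_0$ inside $\mathcal P_3$. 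Everything else is a direct transcription; no new estimate beyond Proposition \ref{prop:4.3} is needed.
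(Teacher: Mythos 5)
Your proposal is correct and is essentially identical to the paper's argument: the paper proves this corollary precisely by feeding the Laplace-domain bounds \eqref{eq:4.10}--\eqref{eq:4.11} of Proposition \ref{prop:4.3} into the inversion theorem \cite[Theorem 7.1]{DoSa:2013}, with exactly the bookkeeping you describe (the factor $s$ on $\phi_0$ producing $\dot\phi_0$ and the $W^4_+$ hypothesis, $\mu=1$ resp.\ $\mu=3/2$ fixing $\mathcal P_3$, and $\underline\sigma^{-2}$ resp.\ $\underline\sigma^{-3/2}$ producing the $\max\{1,t^2\}$ resp.\ $\max\{1,t^{3/2}\}$ factors). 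The only quibble is a transcription slip in your generic template: for the first group the algebraic prefactor is $t^2/(t+1)$, not $t^2/\sqrt{t+1}$, though this does not affect the validity of the approach.
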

To abbreviate the following statements, we will use the following shorthand for approximation errors
\begin{align*}
a_h(t):=\;& \int_0^t\left( \|\mathcal P_3(\dot\phi-\Pi_h^Y\dot{\phi}^h)(\tau)\|_{1/2,\Gamma} +\|\mathcal P_3(\lambda-\Pi_h^X\lambda^h)(\tau)\|_{-1/2,\Gamma} \right) \,d\tau\\
	& + \int_0^t\left( \|\mathcal P_3(\dot{\mathbf u}-\mathbf P_h\dot{\mathbf u}^h)(\tau)\|_{1,\Omega_-} + \|\mathcal P_3(\ddot{\mathbf u}-\mathbf P_h\ddot{\mathbf u}^h)(\tau)\|_{\Omega_-}\right) \,d\tau,
\end{align*}
where $\Pi_h^Y:H^{1/2}(\Gamma)\to Y_h$ and $\Pi_h^X:H^{-1/2}(\Gamma)\to X_h$ are orthogonal projections and $\mathbf P_h$ is the elliptic elastic projection onto $\mathbf V_h$ defined in \eqref{eq:4.100}.
\begin{corollary}\label{cor:4.8}
If  the solution triplet satisfies
\[
(\mathbf u,\phi ,\lambda) \in W_+^3(\mathbf H^1(\Omega_-))\times W^4_+(H^{1/2}(\Gamma))\times W^3_+(H^{-1/2}(\Gamma)),
\]
then $(\mathbf{e}^h,e^h) \in\mathcal{C}(\mathbb{R},\mathbf{H}^1(\Omega_-)\times H^1(\mathbb{R}^d\setminus\Gamma))$ is causal and we have constants $D_1$ and $D_2$ depending only on $\Gamma$ such that  for $t\geq0$
\begin{align*}
\triple{(\mathbf{e}^h,e^h)(t)}_1 +\|(\phi-\phi^h)(t)\|_{1/2,\Gamma} \leq \; & D_1\max\{1,t\}\frac{t^2}{t+1}\; a_h(t), \\
\\
\|(\lambda-\lambda^h)(t)\|_{-1/2,\Gamma} \leq \; & D_2\max\{1,t^{3/2}\}\frac{t^{3/2}}{\sqrt{t+1}}\; a_h(t).
\end{align*}
\end{corollary}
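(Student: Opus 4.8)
The plan is to obtain the stated time-domain bounds by applying the Laplace-inversion machinery of \cite[Theorem 7.1]{DoSa:2013} to the $s$-explicit estimates of Proposition~\ref{prop:4.6}, exactly as Corollary~\ref{cor:4.7} was deduced from Proposition~\ref{prop:4.3}. The starting observation is that the error $(\mathbf e^h,e^h)$ is, for each $s\in\mathbb C_+$, the Laplace transform of the time-domain semidiscretization error, with transformed data built from $\phi$, $\lambda$ and the elliptic-projection defect $\mathbf r^h=\mathbf P_h\mathbf u-\mathbf u$. The crucial point, noted in the paragraph preceding the statement, is that since $\mathbf P_h$ (and the orthogonal projections $\Pi_h^Y$, $\Pi_h^X$) are stationary operators, inserting them under the transform is harmless: the error of the scheme applied to $\mathbf u$ equals the error of the scheme applied to $\mathbf u-\mathbf P_h\mathbf u$ in the relevant combination, so $\mathbf r^h$ and the projection defects $\phi-\Pi_h^Y\phi$, $\lambda-\Pi_h^X\lambda$ may replace $\mathbf u$, $\phi$, $\lambda$ in the right-hand sides.

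First I would rewrite the bounds \eqref{4.18}--\eqref{4.19} of Proposition~\ref{prop:4.6} as transfer-function estimates of the form $\triple{\,\cdot\,}_{X}\le C_F(|s|)\,\triple{\text{data}}_{Y}$ with $C_F(|s|)$ a rational power of $|s|$ times a negative power of $\sigma$ and $\underline\sigma$; specifically $|s|/(\sigma\underline\sigma)$ for the energy-norm/$H^{1/2}$ part and $|s|^{3/2}/(\sigma\underline\sigma^{3/2})$ for the $\lambda$ part. Reading off the exponents, the symbol has the shape $|s|^{\mu}\underline\sigma^{-\nu}$ with $(\mu,\nu)=(2,2)$ after absorbing the extra $1/\sigma$ and an $|s|$ from the data side (the $s\phi$, $s\mathbf r^h$, $s^2\mathbf r^h$ groupings each cost one or two derivatives in time), and correspondingly $(\mu,\nu)=(5/2,5/2)$ for $\lambda-\lambda^h$. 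The inversion theorem then converts $|s|^{\mu}$ into a time-convolution against $\mathcal P_{k}$ with $k=\lceil\mu\rceil+1$ and produces the polynomial-in-$t$ prefactor $t^{\mu-1}/(\Gamma\text{-dependent})$ together with a factor $\max\{1,t^{\nu-\mu}\}$ coming from the $\underline\sigma^{-\nu}$ against $|s|^{-\mu}$ mismatch. Matching this against the claimed prefactors $\max\{1,t\}\,t^2/(t+1)$ and $\max\{1,t^{3/2}\}\,t^{3/2}/\sqrt{t+1}$ fixes the bookkeeping: the $t^2/(t+1)$ (resp.\ $t^{3/2}/\sqrt{t+1}$) is the $|s|^2$ (resp.\ $|s|^{3/2}$) behaviour, and the $\max\{1,t\}$ (resp.\ $\max\{1,t^{3/2}\}$) absorbs the gap between $\underline\sigma^{-2}$ (resp.\ $\underline\sigma^{-3/2}$) and the decay already used. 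The regularity hypotheses $W^3_+$ on $\mathbf u$ and $\lambda$ and $W^4_+$ on $\phi$ are precisely what is needed so that all the time derivatives appearing in $\mathcal P_3$ applied to $\dot\phi$, $\lambda$, $\dot{\mathbf u}$, $\ddot{\mathbf u}$ are $L^1$ in time with values in the right Sobolev space, guaranteeing that $(\mathbf e^h,e^h)$ is a continuous causal function.

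Then I would assemble the final inequalities: apply the inversion theorem once with the energy-norm transfer function to get the first displayed bound, using \eqref{eq:4.9} to pass from the energy norm $\triple{\cdot}_{|s|}$ to $\triple{\cdot}_1$ (this is what injects one of the $\underline\sigma$ powers and hence the extra $\max\{1,t\}$), and once more with the $|s|^{3/2}$ transfer function and \eqref{eq:4.14} for the $\lambda-\lambda^h$ bound. The quantity $a_h(t)$ is exactly the time-integral of $\mathcal P_3$ applied to the four projection-defect terms, so after the substitution-of-projections remark the right-hand sides collapse to $D_i\,(\text{prefactor})\,a_h(t)$ with $D_i$ geometry-dependent.

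The main obstacle is purely the exponent bookkeeping: one must track carefully how the $1/\sigma$ coercivity factor, the $|s|$-weights hidden in the data groupings $(s\phi,\lambda)$, $s\mathbf r^h$, $s^2\mathbf r^h$, and the energy-to-$H^1$ conversion via $\underline\sigma$ each contribute to the final powers of $|s|$ and $\underline\sigma$, because these determine both the order of the convolution operator $\mathcal P_k$ (here $k=3$ suffices precisely because two of the derivatives are already spent moving from $\mathbf u$ to $\dot{\mathbf u},\ddot{\mathbf u}$ and from $\phi$ to $\dot\phi$) and the shape of the polynomial prefactor. Everything else — the equivalence of norms, the fact that $\|\mathbf u-\mathbf P_h\mathbf u\|_{1,\Omega_-}$ is a best-approximation error, the $L^1$-in-time integrability — is routine given the earlier results, and I would simply cite Propositions~\ref{prop:4.3}, \ref{prop:4.6} and \cite[Theorem 7.1]{DoSa:2013} as in Corollary~\ref{cor:4.7}.
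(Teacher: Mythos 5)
Your proposal matches the paper's own (un-elaborated) argument: Corollary \ref{cor:4.8} is obtained exactly as you describe, by feeding the $s$-explicit bounds of Proposition \ref{prop:4.6} — with the projection defects $\mathbf r^h$, $\phi-\Pi_h^Y\phi$, $\lambda-\Pi_h^X\lambda$ inserted as data, which is legitimate because $\mathbf P_h$, $\Pi_h^Y$, $\Pi_h^X$ are time-independent and hence commute with the Laplace transform and with time differentiation — into the inversion theorem of \cite[Theorem 7.1]{DoSa:2013}, just as Corollary \ref{cor:4.7} follows from Proposition \ref{prop:4.3}. The paper supplies no further detail, so your account of the $|s|^\mu\underline\sigma^{-\nu}$ bookkeeping, though partly reverse-engineered from the stated prefactors, is essentially the same proof.
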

%
\paragraph{Full discretization with BDF2-CQ}
%

The purely boundary integral formulation treated in the first part of this paper lent itself naturally to a full discretization using one of the many Convolution Quadrature schemes for the time evolution. For the current variational/boundary integral formulation it would seem that an independent treatment with traditional time-stepping for the Finite Element part and Convolution Quadrature for the discretized boundary integral equations would be the best way to proceed, and for our computational implementation we will proceed in this fashion.

However, it turns out that the separate application of time stepping and CQ to different parts of the system is equivalent to the application of CQ globally, as long as the time stepping method used for the FEM part coincides with the one giving rise to the CQ family used for the implementation (see \cite[Proposition 12]{LaSa:2009a}, \cite{HaSa:2015}). This observation will allow us to analyze the fully discrete method as if the whole discretization were done with CQ. 

We present results for the coupled schemes based on BDF2. In the following section we will show numerical experiments for BDF2-CQ and Trapezoidal Rule-CQ. (We note that the analysis of Trapezoidal Rule CQ was done by Lehel Banjai in \cite{Banjai:2010}, although it does not give explicit behaviour of bounds with respect to $t$.) We will use $(\mathbf u_\kappa^h,v_\kappa^h)$ to denote the fully discrete approximation of $(\mathbf u,v)$ using a CQ method with constant time-step $\kappa$.
In parallel to the corresponding result in Section \ref{sec:3} (Proposition \ref{prop:3.11}), the next estimate follows from the Laplace-domain estimates in Proposition \ref{prop:4.6} and an application of \cite[Proposition 4.6.1]{Sayas:2014}. 

\begin{proposition}\label{prop:4.9}
Let $\ell=6$ and $(\phi_0,\lambda_0)$ be causal problem data such that
\[
(\phi_0,\lambda_0)\in W^{\ell+1}_+(H^{1/2}(\Gamma))\times W^\ell_+(H^{-1/2}(\Gamma)).
\]
Then, for $t\geq0$, it holds that
\[
\triple{(\mathbf u^h,v^h)(t)-(\mathbf u^h_\kappa,v^h_\kappa)(t)}_1 \leq D\kappa^2(1+t^2)\int_0^t\|(\phi_0^{(\ell+1)},\lambda_0^{(\ell)})(\tau)\|_{1/2,-1/2,\Gamma}\,d\tau.
\]
\end{proposition}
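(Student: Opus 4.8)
The plan is to follow exactly the template established for the BDF2-CQ convergence result in the purely boundary-integral setting (Proposition \ref{prop:3.11}), the only difference being that we now feed the Laplace-domain stability bound of Proposition \ref{prop:4.6} (rather than Proposition \ref{prop:3.3}) into the abstract convergence theorem for multistep Convolution Quadrature, namely \cite[Proposition 4.6.1]{Sayas:2014}. The key observation, already flagged in the text before the statement, is that even though our implementation applies Trapezoidal-Rule (or BDF2) time stepping to the FEM block and CQ to the boundary-integral block separately, this is \emph{algebraically identical} to applying the single CQ scheme to the coupled system \eqref{eq:4.6} globally (see \cite[Proposition 12]{LaSa:2009a} and \cite{HaSa:2015}); hence the fully discrete solution $(\mathbf u^h_\kappa,v^h_\kappa)$ is precisely the BDF2-CQ discretization of the transfer operator $s\mapsto (\mathbf u^h,v^h)$ associated with problem \eqref{eq:4.6}. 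So the whole question reduces to controlling this transfer operator and its data-to-solution norm as a function of $s\in\mathbb C_+$.

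Concretely, I would proceed in four steps. \emph{Step 1: identify the transfer operator.} Write $(\mathbf u^h,v^h) = \mathcal B^h(s)(\phi_0,\lambda_0)$, where $\mathcal B^h(s)$ is the bounded linear operator from $H^{1/2}(\Gamma)\times H^{-1/2}(\Gamma)$ into $\mathbf H^1(\Omega_-)\times H^1(\mathbb R^d\setminus\Gamma)$ defined by solving \eqref{eq:4.6}; this is analytic in $\mathbb C_+$ by the standard holomorphic-dependence argument for Lax–Milgram problems with $s$-analytic forms. \emph{Step 2: extract the polynomial-in-$|s|$ bound.} From Proposition \ref{prop:4.6} (with $\mathbf r^h\equiv0$, since here we are estimating the semidiscrete solution itself, not its error), we read off an estimate of the form $\triple{\mathcal B^h(s)(\phi_0,\lambda_0)}_1 \le C\,\frac{|s|}{\sigma\underline\sigma^2}\,\|(s\phi_0,\lambda_0)\|_{1/2,-1/2,\Gamma}$; bounding $\sigma^{-1}\le\underline\sigma^{-1}$ and absorbing the extra $|s|$ coming from the $s\phi_0$ term, one obtains $\|\mathcal B^h(s)\|\le C\,|s|^{\mu}/\underline\sigma^{\nu}$ for explicit small integers $\mu,\nu$ — this is exactly the hypothesis format required by the CQ convergence machinery, and the bound is uniform in $h$ by Proposition \ref{prop:4.6}. \emph{Step 3: invoke the CQ theorem.} Apply \cite[Proposition 4.6.1]{Sayas:2014} to $\mathcal B^h$ with a BDF2 multistep scheme (consistency order $2$, $A$-stable): since BDF2-CQ applied to a symbol of polynomial growth $|s|^{\mu}/\underline\sigma^{\nu}$ converges at order $\kappa^2$ provided the data is sufficiently time-regular, and the loss of derivatives is governed by $\mu+\nu$ plus the order of the method, one finds that $\ell$ derivatives on $\lambda_0$ and $\ell+1$ on $\phi_0$ (because of the extra $s$ multiplying $\phi_0$) suffice, with the particular value $\ell=6$ being what the explicit constants in \cite{Sayas:2014} produce. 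This yields the stated bound with the convolution of $\kappa^2(1+t^2)$ against $\|(\phi_0^{(\ell+1)},\lambda_0^{(\ell)})\|_{1/2,-1/2,\Gamma}$. \emph{Step 4: $h$-uniformity.} Note that every constant above depends only on $\Gamma$ (and the fixed material bounds $C_0$, $\|\mathbf C\|_\infty$, $\rho_f$, $\|\rho_\Sigma\|_\infty$), never on the choice of $\mathbf V_h,X_h,Y_h$, so the final constant $D$ is $h$-independent.

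The main obstacle — and really the only nontrivial point — is bookkeeping the precise $|s|$-powers so that the regularity index $\ell=6$ comes out correctly and the time-growth factor is exactly $1+t^2$ rather than a higher power. One must be careful that the factor $s$ hitting $\phi_0$ in $\|(s\phi_0,\lambda_0)\|_{1/2,-1/2,\Gamma}$ is accounted for as one extra time derivative on $\phi_0$ (hence $W^{\ell+1}_+$ for $\phi_0$ versus $W^{\ell}_+$ for $\lambda_0$), and that the $\underline\sigma^{-2}$ in \eqref{eq:4.10} contributes to the derivative count but, being bounded by $\sigma^{-1}$ near the imaginary axis and by $1$ away from it, does not worsen the $t$-dependence beyond what \cite[Proposition 4.6.1]{Sayas:2014} already encodes. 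Everything else is a verbatim repetition of the argument for Proposition \ref{prop:3.11}; indeed the proof can reasonably be compressed to a single sentence referring the reader to that earlier proof and to Proposition \ref{prop:4.6} for the requisite Laplace-domain bound.
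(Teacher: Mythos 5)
Your proposal matches the paper's argument: the paper gives no detailed proof either, asserting exactly as you do that the result follows by feeding the Laplace-domain stability bound for the semidiscrete transfer operator into the abstract BDF2-CQ convergence theorem \cite[Proposition 4.6.1]{Sayas:2014}, after observing that split time-stepping/CQ coincides with global CQ. One small attribution slip: the data-to-solution bound you write down is \eqref{eq:4.10} from Proposition \ref{prop:4.3}, not Proposition \ref{prop:4.6} with $\mathbf r^h\equiv 0$ (the latter bounds the error in terms of the exact traces $(\phi,\lambda)$, not the data $(\phi_0,\lambda_0)$), but the estimate you actually use is the correct one.
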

It is important to note that the high-order regularity $\ell=6$ is only required to achieve optimal convergence of order $\kappa^2$. For problem data with regularity as low as $\ell=3$, reduced convergence of order $\kappa^{3/2}$ is achieved (see \cite[Chapter 4]{Sayas:2014}).

%
\section{Numerical Experiments}

In this section, we show some experiments for fully discrete methods applied to the BEM and BEM/FEM formulations  we have analyzed. For general ideas of what CQ time-discretization means and how it is used, we refer to \cite{BaSc:2012, HaSa:2014, DoLuSa:2014b}. Algorithms for BEM/FEM applied to acoustic transmission problems are explained in \cite{HaSa:2015}. 

\subsection{Boundary integral method}
In order to test the convergence properties of the implementation the following synthetic problem was solved in $\mathbb{R}^2$. The interior elastic domain will be the unit disk $\Omega_-=\{ \mathbf x\,:\, x_1^2+x_2^2<1\}$, and its exterior will be the acoustic domain. If we let $\mathcal{H}(t)$ be a smooth approximation to the Heaviside function, then the elastic causal pressure wave
\[
\mathbf{u}(\mathbf{x},t) = \mathcal{H}(c_Lt-\mathbf{x}\cdot\mathbf{d})\sin\left(3(c_Lt-\mathbf{x}\cdot\mathbf{d})\right)\mathbf{d}, \quad \mathbf{d}=\left(\tfrac{1}{\sqrt{2}},\tfrac{1}{\sqrt{2}}\right), \quad c_L = \sqrt{\tfrac{2\mu+\lambda}{\rho}},
\]
and the cylindrical acoustic wave
\[
v(\mathbf x,t)=\mathcal{L}^{-1}\left\{\imath H^{(1)}_0(\imath s |\mathbf x|)\,\mathcal{L}\{\mathcal{H}(t)\sin(2t)\} \right\}
\]
solve equations \eqref{eq:2.2a}  and \eqref{eq:2.2b} respectively. Here $\mathcal L$ is the Laplace transform. In order for them to satisfy the entire IBVP \eqref{eq:2.2}, equations  \eqref{eq:2.2c} and  \eqref{eq:2.2d} were used to define the boundary data $\alpha_0 :=\partial_{\nu}v^{inc}$ and $\beta_0:=v^{inc}$. 

The boundary data was sampled accordingly and the Laplace transformed equivalent system \eqref{eq:2.4} was discretized in space with {\tt deltaBEM} (the reader is referred to \cite{DoSaSa:2015,DoLuSa:2014b} for further details on the computational aspects of {\tt deltaBEM}), which can be considered as a Galerkin $\mathcal P_1$ method with reduced quadrature, while Convolution Quadrature was used for time stepping on increasingly finer space/time discretizations with $N$ space points and $M$ time steps. The approximated solutions were then sampled in 20 random points on the circle of radius $r=.7$ for the elastic wave and $r=2$ for the acoustic wave and compared against the exact solutions. The maximum difference in the final time
\begin{align*}
E^{v}_{h,k}:=\;& \frac{\max_{i=1}^{20}|v(\mathbf x_i,t_f)-v^{h,k}(\mathbf x_i,t_f)|}{\max_{i=1}^{20}|v(\mathbf x_i,t_f)|}, \\
E^{\mathbf u}_{h,k}:=\;& \frac{\max_{i=1}^{20}|\mathbf u(\mathbf x_i,t_f)-\mathbf u^{h,k}(\mathbf x_i,t_f)|}{\max_{i=1}^{20}|\mathbf u(\mathbf x_i,t_f)|},
\end{align*}
is used as the error measure. Trapezoidal Rule CQ and BDF2-CQ were both implemented and compared. Tables \ref{tab:1} and \ref{tab:2} summarize the results, while convergence plots can be seen in Figure \ref{fig:1}. In the simulations, the values $\lambda = 9$, $\mu=15$, $\rho_{\Sigma}= 1.5$, $\rho_f=1$ and $c = \sqrt{5}$ were used, the final time was $T=5$.
\begin{table}[h]\centering
\begin{tabular}{ccccc}
\hline
\multicolumn{1}{|c|}{$N/M$} & \multicolumn{1}{c|}{$E^{\mathbf u}_{h,k}$} & \multicolumn{1}{c|}{e.c.r.} & \multicolumn{1}{c|}{$E^{v}_{h,k}$} & \multicolumn{1}{c|}{e.c.r.}  \\ \hline
 45/90  	&   0.8745  & ---    & 1.1603  & ---       \\ \hline
 60/120 	&   0.7131  & 1.2265 & 0.9862  & 1.1766    \\ \hline
 90/180 	&   0.3692  & 1.9312 & 0.8900  & 1.1080    \\ \hline
 120/240 	&   0.2022  & 1.8265 & 0.4778  & 1.8627   \\ \hline
 180/360	&   0.0806  & 2.5079 & 0.2407  & 1.9854   \\ \hline
 240/480 	&   0.0466  & 1.7285 &  0.1482 & 1.6240  \\ \hline
 360/720 	&   0.0302  & 1.5456 &  0.0513 & 2.8869             
\end{tabular}
\caption{Relative errors and estimated convergence rates in the time-domain for the BDF2 Convolution Quadrature with lowest order Galerkin discretization (with reduced quadrature): $N$ represents the number of space discretization points (elements), $M$ is the number of timesteps. The errors are measured at the final time  $T=5$.}\label{tab:1}
\end{table}
\begin{table}[h]\centering
\begin{tabular}{ccccc}
\hline
\multicolumn{1}{|c|}{$N/M$} & \multicolumn{1}{c|}{$E^{\mathbf u}_{h,k}$} & \multicolumn{1}{c|}{e.c.r.} & \multicolumn{1}{c|}{$E^{v}_{h,k}$} & \multicolumn{1}{c|}{e.c.r.}  \\ \hline
 45/90  	&   9.9416  & ---    & 1.0642  & ---       \\ \hline
 60/120 	&  73.3473  &  0.135 & 0.3961  & 2.6864    \\ \hline
 90/180 	&   0.1402  & 523.3082 & 0.2089  & 1.8962  \\ \hline
 120/240 	&   0.0675  & 2.0755 & 0.1261  & 1.6571    \\ \hline
 180/360	&   0.0484  & 1.3955 & 0.0522  & 2.4138    \\ \hline
 240/480 	&   0.0252  & 1.9219 &  0.0308 & 1.6968    \\ \hline
 360/720 	&   0.0181  & 1.3923 &  0.0126 & 2.4404    \\ \hline
 480/960        &   0.0099  & 1.8212 &  0.0066 & 1.9054
\end{tabular}
\caption{Relative errors and estimated convergence rates in the time-domain for the Trapezoidal Rule Convolution Quadrature with the same space discretization as in Table \ref{tab:1}: $N$ represents the number of space discretization points, $M$ is the number of timesteps. The errors are measured at the final time  $T=5$.}\label{tab:2}
\end{table}
\begin{figure}[h]\centering
\includegraphics[scale=.4]{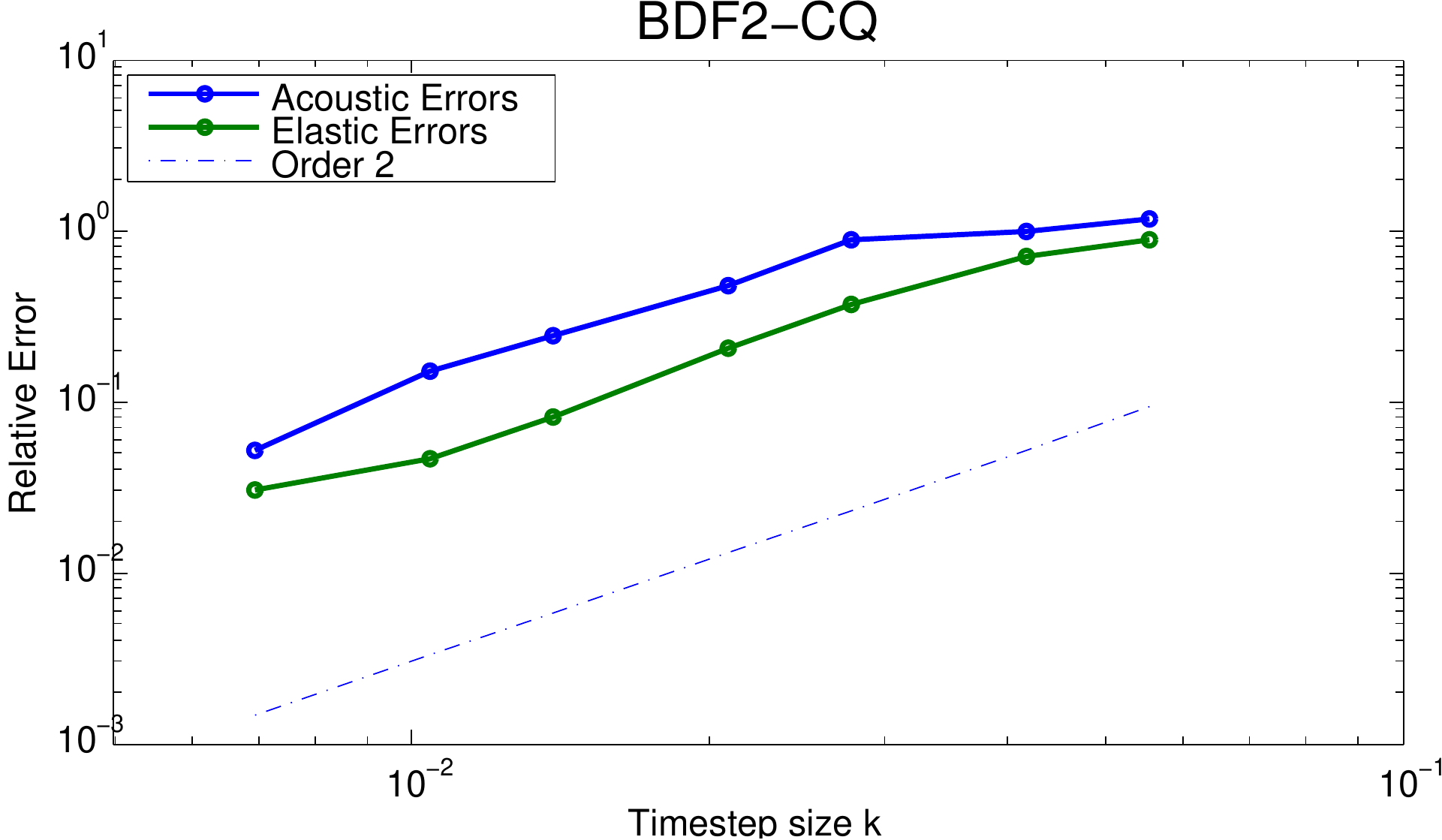}
\includegraphics[scale=.4]{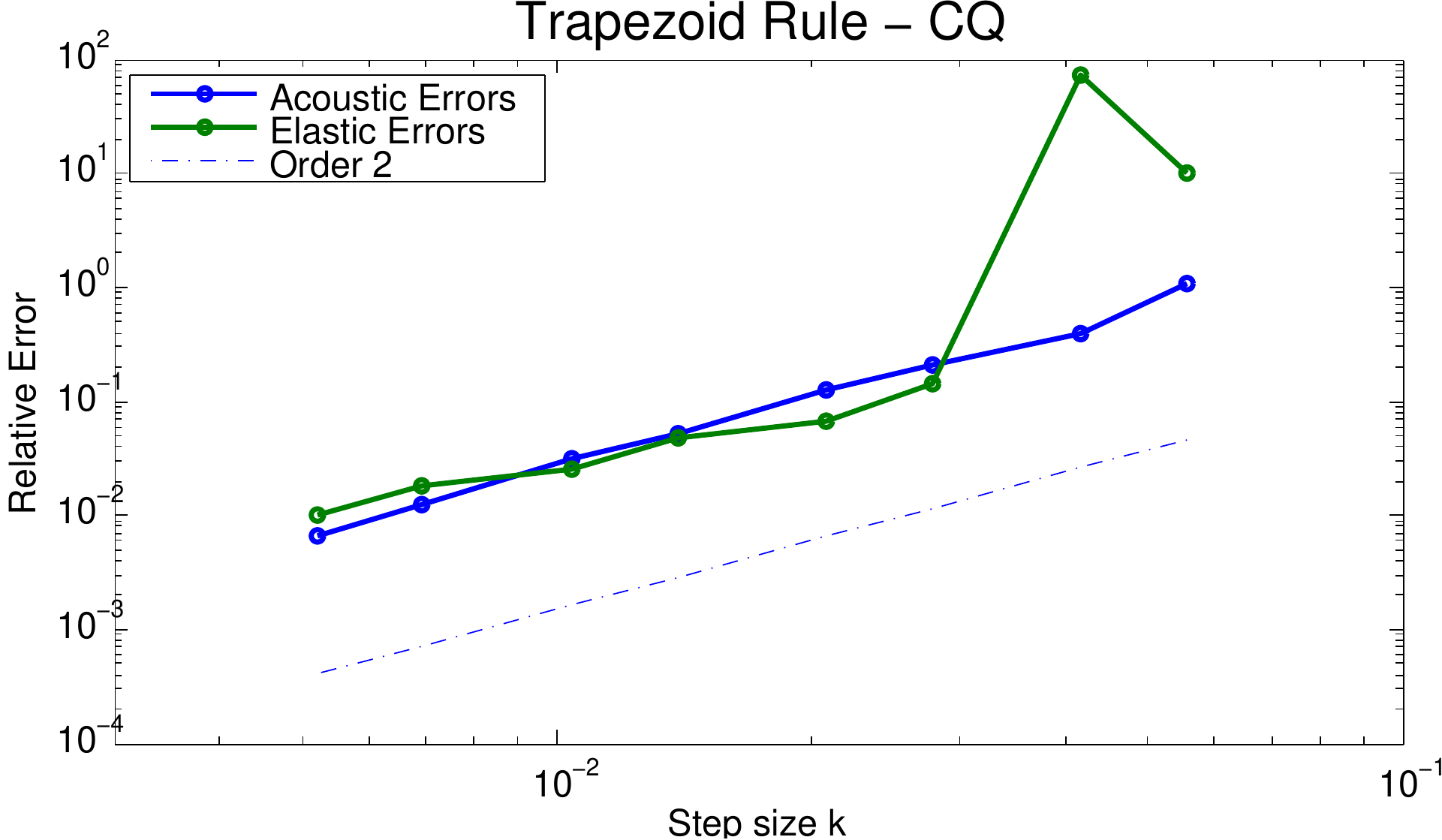}
\caption{Relative errors for the BDF2 and TR implementations of CQ. The maximum difference between the approximate and exact solutions on the sampled points.}\label{fig:1}
\end{figure}
\subsection{Coupled boundary-field method}
%

The previous coupling scheme was implemented using $\mathcal P_3/\mathcal P_2$ Boundary Elements for the acoustic wave field and $\mathcal P_3$ Finite Elements for the interior elastic wavefield. The convergence studies were carried out using the rectangle $[1,3]\times [1,2]$ as the elastic domain, which was triangulated using Matlab-produced unstructured meshes. Known solutions were imposed for the interior and exterior problems; a plane pressure wave on the interior
\[
\mathbf{u} = \psi(c_Lt-\mathbf{x}\cdot\mathbf{d})\,\mathbf{d},\qquad \psi(t):= \mathcal{H}(t)\sin(2t),\qquad c_L:=\sqrt{\tfrac{\lambda+2\mu}{\rho}},
\]
and a cylindrical acoustic wave on the exterior
\[
v = \mathcal{L}^{-1}\left\{\tfrac{i}{4}H^{(1)}_{0}(3|\mathbf{x}-\mathbf{x}_0|)\,\mathcal{L}\{\varphi(t)\}\right\},\qquad \varphi(t):= \mathcal{H}(t)\sin(3t) 
\]
where $\mathbf{x}_0 = (1.5,1.5)$ is the location of the source of the cilyndrical wave, $\mathcal{H}(t)$ is a smooth approximation to the Heaviside function, and $\lambda = 2$, $\mu=3$ and $\rho=5$. 

These two functions satisfy equations \eqref{eq:2.3a} and \eqref{eq:2.3b}. In order to force them to solve the problem in question, the boundary data was manufactured using \eqref{eq:2.3c} and \eqref{eq:2.3b} as the definitions for $(\lambda_0,\phi_0)$. The relevant information was sampled from the known solution, combined according to \eqref{eq:2.3c} and \eqref{eq:2.3b}  and the resulting pair $(\lambda_0,\phi_0)$ was then fed to the discrete system as boundary data.

The experiment was run with fixed FEM and BEM grids with $h=0.025$ (maximum element area $3.5\times10^{-4}$ for the FEM mesh) for a final time $T=1.5$. Trapezoidal Rule time stepping and Trapezoidal Rule Convolution Quadrature was used respectively for the Finite Element and Boundary Element domains with doubling number of time steps starting at 5 and all the way up to 160. The errors were measured for the final time, for the finite element solution $E^{\mathbf{u}}_{h,k,L^2}$ in the $L^2(\Omega_-)$ norm and $E^{\mathbf{u}}_{h,k,H^1}$ in the $H^1(\Omega_-)$ norm. For the acoustic wavefield the discrete solution was postprocessed, sampled and compared to the exact solution in 10 random points in the acoustic domain, with the -normalized- maximum discrepancy $E^v_{h,k}$ being considered as the error.
\begin{figure}\centering
\includegraphics[scale=.6]{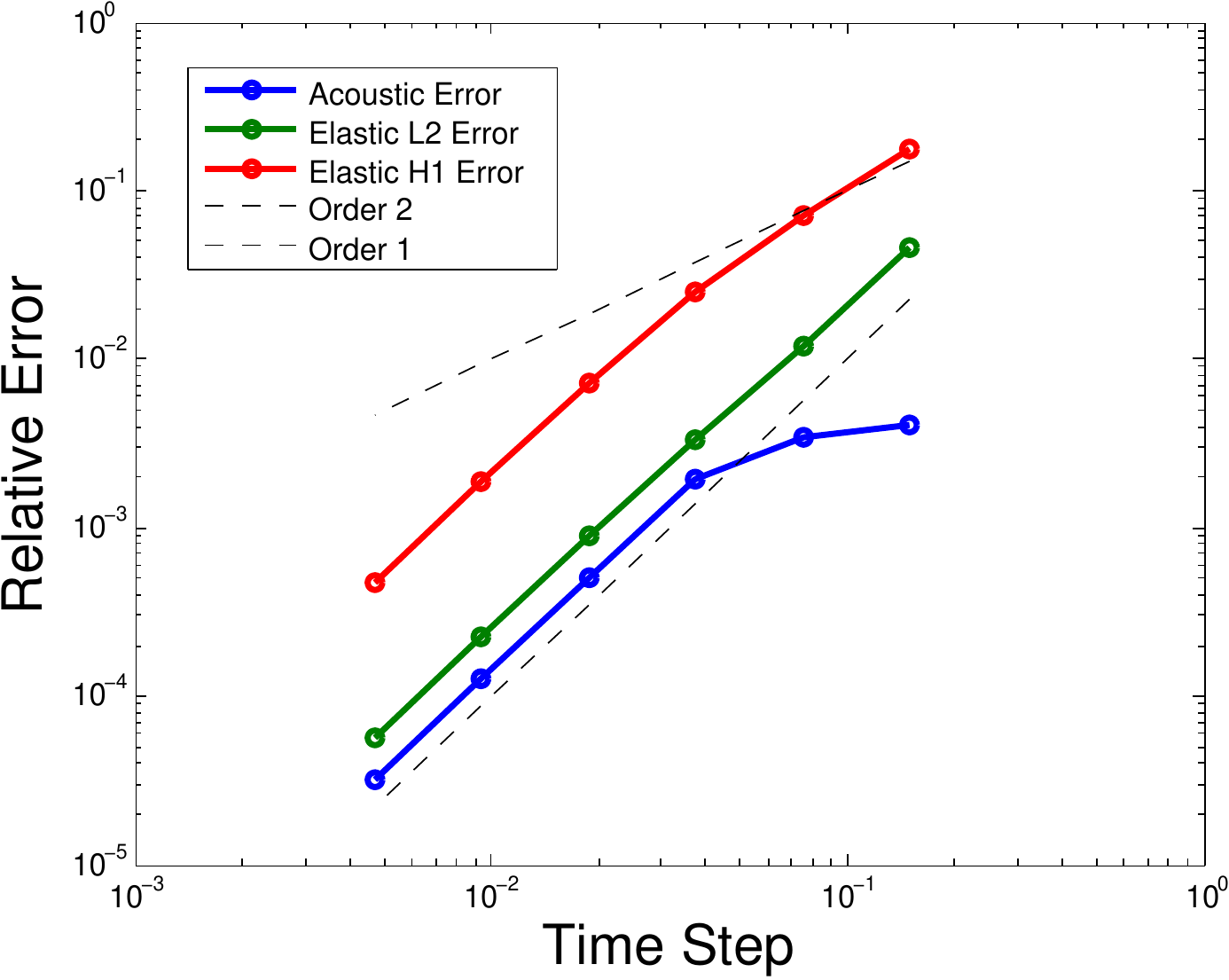}
\caption{Convergence studies for the coupled FEM/BEM scheme. $\mathcal P_3$ elements were used for the finite element solution and $\mathcal P_3/\mathcal P_2$ elements for the boundary element solution.}
\end{figure}
\begin{table}[h]\centering
\begin{tabular}{ccccccc}
\hline
\multicolumn{1}{|c|}{$M$} & \multicolumn{1}{c|}{$E^{v}_{h,k}$} & \multicolumn{1}{c|}{e.c.r.} & \multicolumn{1}{c|}{$E^{\mathbf{u}}_{h,k,L^2}$} & \multicolumn{1}{c|}{e.c.r.} & \multicolumn{1}{c|}{$E^{\mathbf{u}}_{h,k,H^1}$} & \multicolumn{1}{c|}{e.c.r.}  \\ \hline
 5  	&   4.051 E-3  & ---   & 4.531 E-2  & ---   & 0.179 E-1  & ---    \\ \hline
 10 	&   3.393 E-3  & 0.255 & 1.204 E-2  & 1.912 & 7.188 E-2  & 1.315  \\ \hline
 20 	&   1.925 E-3  & 0.818 & 3.364 E-3  & 1.839 & 2.524 E-2  & 1.510  \\ \hline
 40 	&   5.108 E-4  & 1.912 & 8.800 E-4  & 1.934 & 7.184 E-3  & 1.813  \\ \hline
 80	&   1.281 E-4  & 1.996 & 2.223 E-4  & 1.981 & 1.862 E-3  & 1.947   \\ \hline
 160 	&   3.201 E-5  & 2.000 & 5.592 E-5  & 1.995 & 4.700 E-4  & 1.987
\end{tabular}
\caption{Relative errors and estimated convergence rates in the time-domain for the Trapezoidal Rule Convolution Quadrature. $M$ is the number of timesteps. Final time $T=1.5$.}\label{tab:3}
\end{table}

The last two tables and figures show the convergence studies for the Trapezoidal Rule BEM/FEM scheme with simultaneous space/time refinement. Polynomial degrees $k=1$ and $k=2$ were used.

\begin{figure}\centering
\includegraphics[scale=.55]{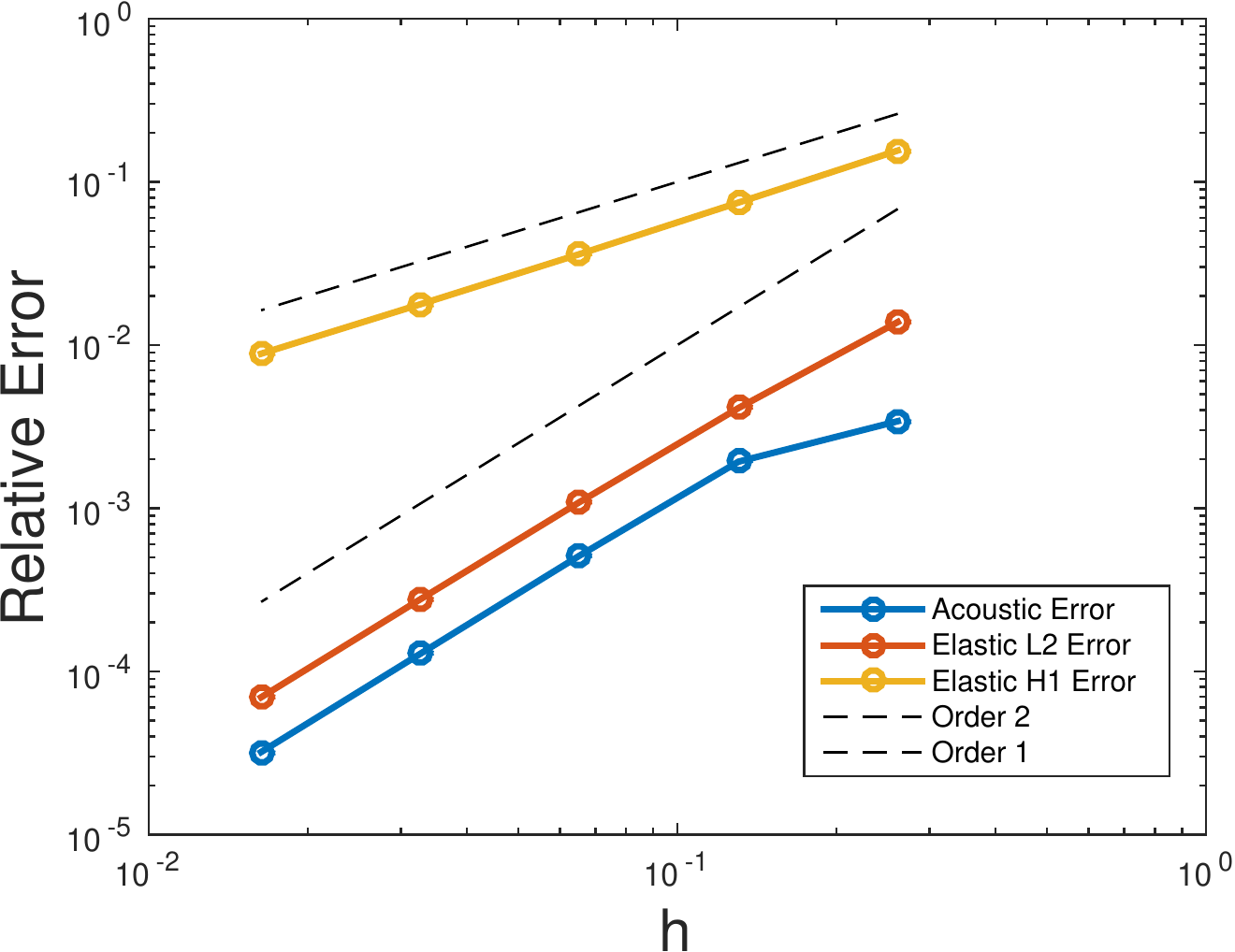}
\includegraphics[scale=.55]{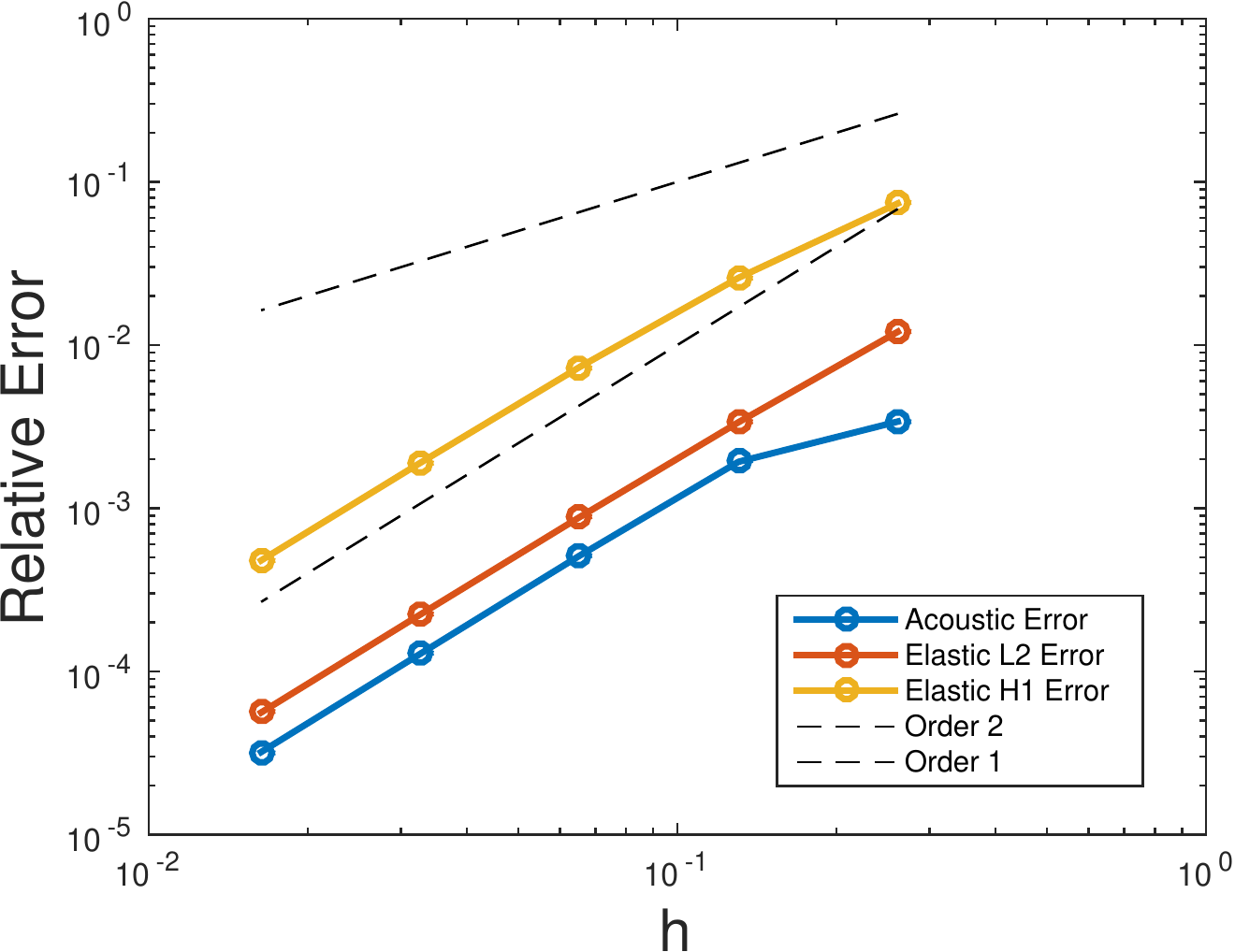}
\caption{Simultaneous space/time refinements. Left: $\mathcal P_1$ elements were used for the finite element solution and $\mathcal P_1/\mathcal P_0$ elements for the boundary element solution. Right: $\mathcal P_2$ elements were used for the finite element solution and $\mathcal P_2/\mathcal P_1$ elements for the boundary element solution.}
\end{figure}

\begin{table}[h]\centering
\begin{tabular}{ccccccc}
\hline
\multicolumn{1}{|c|}{$M/N$} & \multicolumn{1}{c|}{$E^{v}_{h,k}$} & \multicolumn{1}{c|}{e.c.r.} & \multicolumn{1}{c|}{$E^{\mathbf{u}}_{h,k,L^2}$} & \multicolumn{1}{c|}{e.c.r.} & \multicolumn{1}{c|}{$E^{\mathbf{u}}_{h,k,H^1}$} & \multicolumn{1}{c|}{e.c.r.}  \\  \hline
 10/1 	&   3.412 E-3  & ----- & 1.382 E-2  & ----- & 1.555 E-1  & -----  \\ \hline
 20/2 	&   1.929 E-3  & 0.823 & 4.112 E-3  & 1.749 & 7.467 E-2  & 1.058  \\ \hline
 40/3 	&   5.113 E-4  & 1.915 & 1.088 E-3  & 1.917 & 3.604 E-2  & 1.051  \\ \hline
 80/4	&   1.281 E-4  & 1.996 & 2.763 E-4  & 1.978 & 1.776 E-2  & 1.021   \\ \hline
 160/5 	&   3.202 E-5  & 2.000 & 6.935 E-5  & 1.994 & 8.843 E-3  & 1.006
\end{tabular}
\caption{Relative errors and estimated convergence rates in the time-domain for the Trapezoidal Rule Convolution Quadrature with $\mathcal P_1 /\mathcal P_0$ boundary elements and $\mathcal P_1$ finite elements. $h=0.52 \times 2^{-N}$ is the maximum lenght of the triangulation and $M$ is the number of timesteps. Final time $T=1.5$.}\label{tab:4}
\end{table}

\begin{table}[h]\centering
\begin{tabular}{ccccccc}
\hline
\multicolumn{1}{|c|}{$M/N$} & \multicolumn{1}{c|}{$E^{v}_{h,k}$} & \multicolumn{1}{c|}{e.c.r.} & \multicolumn{1}{c|}{$E^{\mathbf{u}}_{h,k,L^2}$} & \multicolumn{1}{c|}{e.c.r.} & \multicolumn{1}{c|}{$E^{\mathbf{u}}_{h,k,H^1}$} & \multicolumn{1}{c|}{e.c.r.}  \\ \hline
 10/1 	& 3.395 E-3  & ----- & 1.205 E-2  & ----- & 7.369 E-2  & -----  \\ \hline
 20/2 	& 1.925 E-3  & 0.819 & 3.364 E-3  & 1.841 & 2.581 E-2  & 1.513  \\ \hline
 40/3 	& 5.108 E-4  & 1.912 & 8.799 E-4  & 1.935 & 7.316 E-3  & 1.819  \\ \hline
 80/4	&   1.281 E-4  & 1.996 & 2.223 E-4  & 1.981 & 1.896 E-3  & 1.948 \\ \hline
 160/5 	&   3.201 E-5  & 2.000 & 5.591 E-5  & 1.995 & 4.783 E-4  & 1.987
\end{tabular}
\caption{Relative errors and estimated convergence rates in the time-domain for the Trapezoidal Rule Convolution Quadrature with $\mathcal P_2 /\mathcal P_1$ boundary elements and $\mathcal P_2$ finite elements. $h=0.52 \times 2^{-N}$ is the maximum lenght of the triangulation and $M$ is the number of timesteps. Final time $T=1.5$.}\label{tab:5}
\end{table}
%
\section{Conclusions}
%
We have presented stable fully discrete formulations for wave-structure scattering in the time-domain. The formulations are well suited for pure Boundary Element implementation or Boundary/Finite element coupling in the case of a general elastic scatterer. For time discretization  a BDF2 Convolution Quadrature scheme was used as a basis for the analysis, but, as the numerical experiments show, the Trapezoidal Rule schemes present good global convergence properties.

The analysis presented in this work generalizes easily to the case of several scatterers or those with non simply connected geometries. The study of scattering by obstacles with more complex physical properties such as piezoelectrics is the object of current research and could be of interest in applications such as active noise reduction.

A purely time-domain analysis, using the theory of evolution equations \cite{DoSa:2013,Sayas:2013, Sayas:2014} might shed sharper convergence estimates, with less regularity required for the solution and better control of bounds with respect to the time variable. However, the wave-structure interaction problem cannot be written as a second order evolution equation for an unbounded operator, because of the presence of time derivatives in the transmission conditions and other possible avenues have to be explored.
%
\bibliography{referencesBEM}
%

\end{document}